\tikzset{
  shift left/.style ={commutative diagrams/shift left={#1}},
  shift right/.style={commutative diagrams/shift right={#1}}
}
\theoremstyle{plain}
\newtheorem{theorem}{Theorem}[section]
\newtheorem{lemma}[theorem]{Lemma}
\newtheorem{proposition}[theorem]{Proposition}
\newtheorem{corollary}[theorem]{Corollary}
\theoremstyle{definition}
\newtheorem{definition}[theorem]{Definition}
\newtheorem{example}[theorem]{Example}
\newtheorem{remark}[theorem]{Remark}
\newtheorem{observation}[theorem]{Observation}
\newtheorem*{thexreftheorem}{Theorem \thexref}
\newtheorem*{thexrefcorollary}{Corollary \thexcoro}
\newcommand{\C}{\mathbb C}
\renewcommand{\epsilon}{\varepsilon}
\renewcommand{\phi}{\varphi}
\begin{document}

\title{Stratified homotopy theory of topological $\infty$-stacks: a toolbox}
\author[Mikala Ørsnes Jansen]{Mikala Ørsnes Jansen}
\address{Department of Mathematical Sciences, University of Copenhagen, 2100 Copenhagen, Denmark.}
\email{mikala@math.ku.dk}
\thanks{The author was supported by the European Research Council (ERC) under the European Unions Horizon 2020 research and innovation programme (grant agreement No 682922) and the Danish National Research Foundation through the Copenhagen Centre for Geometry and Topology (DNRF151).}

\begin{abstract}
We exploit the theory of $\infty$-stacks to provide some basic definitions and calculational tools regarding stratified homotopy theory of stratified topological stacks.
\end{abstract}

\maketitle

\tableofcontents

\section{Introduction}

The purpose of this note is to define the stratified homotopy type of a topological ($\infty$-)stack and establish some basic calculational tools. In order to do so, we must review the basics of $\infty$-stacks and sheaves on these. The results are neither deep nor surprising, but we were in need of a rigorous setup to make the explicit calculation in \cite{OrsnesJansen23a} where we determine the stratified homotopy type of the Deligne--Mumford--Knudsen compactification, that is, the moduli stack of stable nodal curves. We hope the tools may be of use in other settings as well or at the very least that they provide some basic insight into sheaves on stacks.

We begin by recapping and setting up the relevant background: we recall the definitions of $\infty$-sites, $\infty$-stacks and sheaves on $\infty$-stacks and move on to some fundamental descent results (\S \ref{stacks and sheaves}). We define stratified (topological) $\infty$-stacks and constructible sheaves on stratified étaletale  $\infty$-stacks, and we then introduce the notion of exit path $\infty$-categories of stratified étale $\infty$-stacks (\S \ref{stratified homotopy theory}). Among other things, we exhibit two useful permanence properties of exit path $\infty$-categories that provide a powerful tool for calculations (\Cref{permanence properties}). Finally, as a small aside, we consider the procedure of changing the base category of a stack; more specifically, we provide an explicit characterisation of the underlying topological $\infty$-stack $X^{\operatorname{top}}$ of an algebraic $\infty$-stack $X$ (\S \ref{change of base}).

We remark here that the results of this note apply mostly to étale topological $\infty$-stacks, that is, stacks admitting an étale atlas. As a consequence, the present setting has some serious limitations as such stacks are for the most part $1$-truncated. Importantly, however, they include the topological stacks underlying classical algebraic Deligne--Mumford $1$-stacks. See \Cref{n-stacks}, \Cref{restrictions of LH(X)} and \Cref{not Artin stacks}.

\medskip

\textbf{Existing literature.}
The content of this note is just a natural generalisation of the notion of exit path $\infty$-categories of conically stratified topological spaces as developed by Lurie in \cite[Appendix A]{LurieHA}. We opt, however, for a less constructive approach: essentially, the exit path $\infty$-category of a stratified $\infty$-stack will be the idempotent complete $\infty$-category classifying constructible sheaves, if it exists (see also \cite[\S 3]{ClausenOrsnesJansen}).

In \cite{BarwickHaine19a}, Barwick and Haine build on the work of \cite{BarwickGlasmanHaine} and set up a stratified étale homotopy theory of algebraic stacks, but this is a very different setting from the one in this note. Also, our ambitions with this note are more modest: we simply needed an easily manageable setup in which to make our calculations. Thus we exploit the full strength of $\infty$-stacks to make some simple observations and definitions introducing only a minimum of assumptions. Even though we are ultimately interested in ``ordinary'' topological stacks, that is, $1$-truncated $\infty$-stacks, viewing these as $\infty$-stacks makes the arguments run a lot neater and we would in any case need the full strength of studying sheaves valued in the $\infty$-category $\mathcal{S}$ of anima.

In \cite[Appendices A and B]{Nocera}, Nocera also treats stratified homotopy theory of stratified topological stacks, but of a slightly different flavour. One key difference is that while Nocera works with the geometrically defined notion of conicality, we trade this for purely categorical conditions on the $\infty$-category of constructible sheaves.

After this note was first written, Haine--Porta--Teyssier have further developed this unconstructive approach of studying stratified spaces via concrete conditions on the $\infty$-category of constructible sheaves (\cite{HainePortaTeyssier24}). They provide a beautiful unified theory that incorporates a large class of natural and interesting examples of stratified spaces that were not approachable with the previously available methods. Previous to this work, Lurie's exodromy equivalence has been extended by Lejay, who systematically uses hypersheaves to relax the finiteness conditions of the stratifying poset (\cite{Lejay}). Porta--Teyssier have also studied Lurie's exodromy equivalence in a purely $\infty$-categorical language, revealing among other things that it is functorial with respect to arbitrary stratified maps (\cite{PortaTeyssier}).

Let us also briefly mention that as a byproduct of our setup, we define the homotopy type of a topological $\infty$-stack (when it admits one, see \Cref{definition exit path category}). There are already various papers dealing with the homotopy theory of stacks and the basics of topological stacks, for example \cite{Noohi, Ebert, Carchedi}. Our approach is less constructive, our ultimate criterion being a classification of locally constant (or more generally constructible) sheaves, but this also means that we can avoid introducing technical assumptions on the stacks under consideration and our definition does not require us to choose an atlas.

\medskip

\textbf{Acknowledgements.} 
The author would like to thank Dustin Clausen for many fruitful conversations, for the (also many) comments and remarks on drafts of this note and not least for the collaboration in \cite{ClausenOrsnesJansen}; the definitions, results and proofs regarding exit path $\infty$-categories presented here are direct generalisations of the work done in that paper. We would also like to thank Søren Galatius for valuable comments and insights. Finally, we thank two anonymous referees for their valuable comments and pointers.

\medskip

\textbf{Notation and conventions} We adopt the set-theoretic conventions of \cite[\S 1.2.15]{LurieHTT}.

\section{Stacks and sheaves}\label{stacks and sheaves}

In this section, we recall the definitions of sheaves on $\infty$-sites, $\infty$-stacks, and sheaves on $\infty$-stacks.

\subsection{Sheaves on sites}

By an \textit{$\infty$-site}, we mean a small $\infty$-category $C$ equipped with a Grothendieck topology, i.e.~a specification of covering sieves for each object in $C$ satisfying certain axioms (see \cite[Definition 6.2.2.1]{LurieHTT}).

Let $\mathcal{E}$ be an $\infty$-category and consider the $\infty$-category of $\mathcal{E}$-valued presheaves $\mathcal{P}_{\mathcal{E}}(C)=\operatorname{Fun}(C^{\operatorname{op}},\mathcal{E})$. A presheaf $F\colon C^{\operatorname{op}}\rightarrow \mathcal{E}$ is a \textit{sheaf} if for any object $x\in C$ and any covering sieve $C_{/x}^0\subseteq C_{/x}$, the composite
\begin{align*}
(C_{/x}^0)^{\triangleleft}\hookrightarrow C_{/x}^{\triangleleft}\rightarrow C\xrightarrow{F^{\operatorname{op}}} \mathcal{E}^{\operatorname{op}}
\end{align*}
is a colimit diagram in $\mathcal{E}^{\operatorname{op}}$. We denote by $\operatorname{Shv}(C; \mathcal{E})\subseteq \mathcal{P}_{\mathcal{E}}(C)$ the full subcategory spanned by the $\mathcal{E}$-valued sheaves on $C$.

For $\mathcal{E}=\mathcal{S}$, the $\infty$-category of spaces, we write $\mathcal{P}(C):=\mathcal{P}_{\mathcal{S}}(C)$ and $\operatorname{Shv}(C):=\operatorname{Shv}(C;\mathcal{S})$, and we recall that $\operatorname{Shv}(C)$ can be identified as a localisation of $\mathcal{P}(C)$. In view of \Cref{sheaves with general coefficients from space valued sheaves} below, it will generally suffice for us to deal with $\mathcal{S}$-valued sheaves, so let us spell out this localisation in slightly more detail. For a given $\infty$-site $C$, consider the Yoneda embedding $y\colon C\rightarrow \mathcal{P}(C)$ and let $S$ denote the collection of monomorphisms $u\rightarrow y(x)$ corresponding to covering sieves on $x$ for all objects $x$ in $C$ (\cite[Proposition 6.2.2.5]{LurieHTT}). The category of sheaves on $C$ is the full subcategory $\operatorname{Shv}(C)\subseteq \mathcal{P}(C)$ of $S$-local presheaves on $C$ (\cite[Definition 6.2.2.6]{LurieHTT}). This identifies $\operatorname{Shv}(C)$ as a topological localisation of $\mathcal{P}(C)$ and thus in particular an $\infty$-topos (\cite[Proposition 6.2.2.7]{LurieHTT}).

We refer to \cite[\S 6.2.2]{LurieHTT} for details. See also \cite[\S\S 2.2-2.3]{PortaYu16} where these observations are worked out carefully.

\begin{remark}\label{sheaves with general coefficients from space valued sheaves}
Let $\mathcal{E}$ be a presentable $\infty$-category. The $\infty$-categories $\mathcal{P}_{\mathcal{E}}(C)$ and $\operatorname{Shv}(C; \mathcal{E})$ can be naturally identified with the tensor products $\mathcal{P}(C)\otimes \mathcal{E}$, respectively $\operatorname{Shv}(C)\otimes \mathcal{E}$ (\cite[Remark 1.3.1.6 and Proposition 1.3.1.7]{LurieSAG}, see also \cite[\S 4.8.1]{LurieHTT} for details on tensor products). For our purposes it suffices to consider sheaves valued in compactly generated $\infty$-categories, where the situation is even simpler: if $\mathcal{E}$ is a compactly generated $\infty$-category and $\mathcal{E}_0$ denotes the full subcategory of compact objects, then 
\begin{align*}
\operatorname{Shv}(C;\mathcal{E})\xrightarrow{\ \sim \ } \operatorname{Fun}^{\operatorname{lex}}(\mathcal{E}_0^{\operatorname{op}},\operatorname{Shv}(C)),
\end{align*}
where the right hand side denotes the full subcategory spanned by the functors preserving finite limits (see e.g.~\cite[Appendix B]{OrsnesJansen} where the relevant observations are made for the site $\mathscr{U}(X)$ of open subsets of a topological space $X$).
\end{remark}

We will need the following lemma where $y\colon C\rightarrow \mathcal{P}(C)$ denotes the Yoneda embedding as above and $L\colon \mathcal{P}(C)\rightarrow \operatorname{Shv}(C)$ the sheafification functor. Recall that for an object $x$ in $C$, the \textit{big site} $C_{/x}$ of $x$ is the over-$\infty$-category $C_{/x}$ equipped with the following Grothendieck topology: the covering families are collections of morphisms whose image under the functor $C_{/x}\rightarrow C$ are covering families in $C$.

\begin{lemma}\label{over categories and sheaves}
Let $C$ be a small $\infty$-site, let $x$ be an object of $C$ and consider the big site $C_{/x}$ of $x$. The left adjoint to the restriction along $C_{/x}\rightarrow C$ induces an equivalence
\begin{align*}
\operatorname{Shv}(C_{/x})\xrightarrow{\sim} \operatorname{Shv}(C)_{/Ly(x)}.
\end{align*}
Moreover, for any morphism of $\infty$-sites $f\colon C\rightarrow D$, we have a homotopy commutative diagram where the vertical maps are given by the left adjoint to restriction along $f$:
\begin{center}
\begin{tikzpicture}
\matrix (m) [matrix of math nodes,row sep=2em,column sep=2em]
  {
\operatorname{Shv}(C_{/x}) & \operatorname{Shv}(C)_{/Ly(x)} \\
\operatorname{Shv}(D_{/f(x)}) & \operatorname{Shv}(D)_{/Ly(f(x))} \\
  };
  \path[-stealth]
(m-1-1) edge node[above]{$\sim$} (m-1-2)
(m-2-1) edge node[below]{$\sim$} (m-2-2)
(m-1-2) edge (m-2-2)
(m-1-1) edge (m-2-1)
  ;
\end{tikzpicture}
\end{center}
\end{lemma}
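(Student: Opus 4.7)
My plan is to reduce everything to the corresponding statement for presheaves (which is standard) and then transport through the sheafification localisation. First I would recall the presheaf version: the left Kan extension $\pi_!\colon \mathcal{P}(C_{/x})\to\mathcal{P}(C)$ along the forgetful functor $\pi\colon C_{/x}\to C$ factors canonically through $\mathcal{P}(C)_{/y(x)}$, yielding an equivalence $\mathcal{P}(C_{/x})\xrightarrow{\sim}\mathcal{P}(C)_{/y(x)}$ that sends a representable $y(c\to x)$ to $(y(c)\to y(x))$. This is classical and can be deduced via straightening/unstraightening, noting that $C_{/x}\to C$ is the right fibration classifying the representable presheaf $y(x)$ (cf.~\cite[\S 5.1.6]{LurieHTT}).

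Next I would match the sheaf conditions. By \cite[Proposition 6.2.2.7]{LurieHTT}, $\operatorname{Shv}(C)\subseteq\mathcal{P}(C)$ is the full subcategory of objects local with respect to the set $S$ of monomorphisms $u\hookrightarrow y(c)$ arising from covering sieves on $c\in C$, and by the very definition of the big site topology, the analogous set $S'$ generating $\operatorname{Shv}(C_{/x})\subseteq\mathcal{P}(C_{/x})$ corresponds under $\pi_!$ precisely to the image of $S$ in the slice $\mathcal{P}(C)_{/y(x)}$. Moreover, for any reflective localisation $L\colon\mathcal{E}\rightleftarrows\mathcal{F}$ and any object $e\in\mathcal{E}$, there is an induced reflective localisation $\mathcal{E}_{/e}\rightleftarrows\mathcal{F}_{/Le}$ whose local objects are precisely the arrows $e'\to e$ with $e'$ local in $\mathcal{E}$; applied to $L\colon\mathcal{P}(C)\rightleftarrows\operatorname{Shv}(C)$ at $e=y(x)$, this identifies the image of $\operatorname{Shv}(C_{/x})$ inside $\mathcal{P}(C)_{/y(x)}$ with $\operatorname{Shv}(C)_{/Ly(x)}$. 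A direct check shows the resulting equivalence is indeed the left adjoint to restriction along $\pi$, since $\pi_!$ sends the terminal presheaf on $C_{/x}$ to $y(x)$ and hence, after sheafification, to $Ly(x)$.

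For the naturality square, both compositions are colimit-preserving (as left adjoints), so it suffices to verify that they agree on a set of generators, which I take to be the sheafified representables $Ly(c\to x)\in\operatorname{Shv}(C_{/x})$. Both compositions send $Ly(c\to x)$ to $(Ly(f(c))\to Ly(f(x)))\in\operatorname{Shv}(D)_{/Ly(f(x))}$, and the required coherence follows from the naturality of the Yoneda embedding and of sheafification with respect to morphisms of sites. The main obstacle is the bookkeeping in the second paragraph: carefully tracking covering sieves under the equivalence $\pi_!$ and pinning down the slice-of-a-reflective-localisation principle in the $\infty$-categorical setting; once these two inputs are in place, the rest reduces to formal manipulations with adjunctions in $\infty$-topos theory.
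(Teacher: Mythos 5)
Your proposal follows essentially the same route as the paper: pass to the presheaf equivalence $\mathcal{P}(C_{/x})\simeq\mathcal{P}(C)_{/y(x)}$ (the paper cites \cite[Corollary 5.1.6.12]{LurieHTT}, which is the straightening statement you invoke), realise $\operatorname{Shv}(C)_{/Ly(x)}$ as a reflective localisation of this slice via the sliced sheafification adjunction (\cite[Proposition 5.2.5.1]{LurieHTT} in the paper), match the big-site covering monomorphisms with the sliced covering monomorphisms of $C$, and handle the naturality square by comparing left adjoints on representables. One intermediate claim in your second paragraph is, however, false as stated: for the sliced localisation $\mathcal{P}(C)_{/y(x)}\rightleftarrows\operatorname{Shv}(C)_{/Ly(x)}$ the local objects are \emph{not} the arrows $e'\to y(x)$ with $e'$ a sheaf. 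The right adjoint sends $F\to Ly(x)$ to the pullback $F\times_{Ly(x)}y(x)\to y(x)$, so for instance $\operatorname{id}_{y(x)}$ is always local even though $y(x)$ is typically not a sheaf; the correct characterisation is that $(e'\to y(x))$ is local iff the naturality square of the unit on $e'\to y(x)$ is a pullback. The clean way to run your argument (and the one the paper uses) is to work with inverted morphisms rather than local objects: the sliced localisation inverts exactly those morphisms of $\mathcal{P}(C)_{/y(x)}$ whose image under the projection to $\mathcal{P}(C)$ is inverted by $L$, and under the presheaf equivalence these are generated by the covering monomorphisms of the big site. With that substitution your argument is correct and coincides with the paper's.
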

\begin{proof}
By \cite[Proposition 5.2.5.1]{LurieHTT}, the sheafification adjunction $(L\dashv \iota)\colon \mathcal{P}(C)\rightleftarrows \operatorname{Shv}(C)$ passes to an adjunction
\begin{align*}
(L'\dashv \iota')\colon \mathcal{P}(C)_{/y(x)}\rightleftarrows \operatorname{Shv}(C)_{/Ly(x)}.
\end{align*}
The resulting right adjoint $\iota'$ is fully faithful since the counit is given by the counit of the adjunction $L\dashv \iota$ which is an equivalence, $\iota$ being fully faithful. It follows that this exhibits $\operatorname{Shv}(C)_{/Ly(x)}$ as a reflective subcategory of $\mathcal{P}(C)_{/y(x)}$.

By the universal property of presheaf categories, we have a colimit preserving functor
\begin{align*}
\mathcal{P}(C_{/x})\rightarrow \mathcal{P}(C)_{/y(x)}
\end{align*}
which sends a representable $t\colon c\rightarrow x$ to $y(t)\colon y(c)\rightarrow y(x)$; in other words, when composed with the projection to $\mathcal{P}(C)$ it agrees with the left adjoint to the restriction along $C_{/x}\rightarrow C$ (\cite[Corollary F]{HaugsengHebestreitLinskensNuiten}). By \cite[Corollary 5.1.6.12]{LurieHTT}, this functor is an equivalence. Thus is suffices to note that the localisation
\begin{align*}
\mathcal{P}(C)_{/y(x)}\xrightarrow{L'} \operatorname{Shv}(C)_{/Ly(x)}
\end{align*}
precisely inverts those morphisms whose image under the projection to $\mathcal{P}(C)$ are inverted by $L$, and that this exactly recovers the covering monomorphisms in $\mathcal{P}(C_{/x})$ corresponding to the big site structure on $C_{/x}$. The final claim follows by uniqueness of left adjoints and the fact that the left adjoint to restriction along $f_{/x}$ sends a representable $t\colon c\rightarrow x$ to the representable given by its value under $f_{/x}$, i.e.~$f(t)\colon f(c)\rightarrow f(x)$.
\end{proof}

We shall also need the following observation; recall that a morphism $f\colon u\rightarrow x$ in an $\infty$-topos is an \textit{effective epimorphism} if its \v{C}ech nerve is a simplicial resolution of $x$ (see \cite[Corollary 6.2.3.5]{LurieHTT}).

\begin{lemma}\label{yoneda maps atlas to effective epi}
Let $C$ be an $\infty$-site and consider the composite $C\xrightarrow{y} \mathcal{P}(C)\xrightarrow{L}\operatorname{Shv}(C)$ of the Yoneda embedding and the sheafification functor. Let $\alpha\colon u \rightarrow x$ be a morphism in $C$. If the sieve on $x$ generated by $\alpha$ is a covering in $C$, then $Ly(\alpha)\colon Ly(u)\rightarrow Ly(x)$ is an effective epimorphism.
\end{lemma}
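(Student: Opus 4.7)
The plan is to factor $y(\alpha)\colon y(u)\rightarrow y(x)$ in $\mathcal{P}(C)$ through the subpresheaf corresponding to the covering sieve generated by $\alpha$, and then apply sheafification. By \cite[Proposition 6.2.2.5]{LurieHTT}, the sieve $S$ generated by $\alpha$ corresponds to a monomorphism $s\colon U\hookrightarrow y(x)$ in $\mathcal{P}(C)$, where $U$ is the colimit over $S\subseteq C_{/x}$ of the representables. Since $\alpha$ itself lies in $S$, the universal property of this colimit produces a canonical map $\eta\colon y(u)\rightarrow U$ satisfying $s\circ \eta\simeq y(\alpha)$.

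The key observation I would prove is that $\eta$ is an effective epimorphism in the presheaf $\infty$-topos $\mathcal{P}(C)$. Since effective epimorphisms in a presheaf $\infty$-topos are detected objectwise, this reduces to showing that for every $c\in C$ the induced map of anima $\operatorname{Map}_C(c,u)\rightarrow U(c)$ is surjective on $\pi_0$. But by construction of the generated sieve, $U(c)\subseteq \operatorname{Map}_C(c,x)$ consists exactly of those maps $\gamma\colon c\rightarrow x$ that factor through $\alpha$, which is precisely the required surjectivity.

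With the factorization $y(\alpha)\simeq s\circ \eta$ in hand, the remainder is formal. The sheafification functor $L\colon \mathcal{P}(C)\rightarrow \operatorname{Shv}(C)$ is by construction the localisation at the covering monomorphisms (see \cite[Definition 6.2.2.6]{LurieHTT}), so $L(s)$ is an equivalence. Moreover, $L$ is a left-exact left adjoint between $\infty$-topoi and hence preserves effective epimorphisms (they are characterised via Čech nerves, using only colimits and finite limits). Consequently $L(\eta)$ is an effective epimorphism in $\operatorname{Shv}(C)$, and $Ly(\alpha)\simeq L(s)\circ L(\eta)$ is the composite of an effective epimorphism and an equivalence, hence itself an effective epimorphism.

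The main obstacle is the objectwise check that $\eta$ is an effective epimorphism; once that is settled, the rest is purely formal manipulation of the sheafification adjunction and standard preservation properties of left-exact left adjoints between $\infty$-topoi.
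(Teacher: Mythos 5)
Your argument is correct and follows essentially the same route as the paper: both factor $y(\alpha)$ through a covering monomorphism into $y(x)$ and then use that the left-exact localisation $L$ inverts covering monomorphisms and preserves effective epimorphisms. The only difference is in packaging — the paper takes the $(-1)$-truncation of the \v{C}ech nerve of $y(\alpha)$ and invokes \cite[Lemma 6.2.2.16]{LurieHTT} to reduce to showing that the resulting sieve is a covering (it contains the sieve generated by $\alpha$), whereas you factor through the subobject attached to the generated sieve itself and verify the objectwise $\pi_0$-surjectivity by hand; the one cosmetic imprecision is that the subobject of \cite[Proposition 6.2.2.5]{LurieHTT} is the \emph{image} of the map from the colimit of representables over the sieve rather than that colimit itself, but this does not affect your argument since you only use its (correct) objectwise description.
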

\begin{proof}
Let $\check{C}(y(\alpha))$ denote the \v{C}ech nerve of $y(\alpha)$ and consider its $(-1)$-truncation, the monomorphism $\phi\colon \varinjlim \check{C}(y(\alpha))\rightarrow y(x)$ (\cite[Proposition 6.2.3.4]{LurieHTT}). By \cite[Lemma 6.2.2.16]{LurieHTT}, $Ly(\alpha)$ is an effective epimorphism if and only if $\phi$ is a covering monomorphism in $\mathcal{P}(C)$, since $L$ is a left exact left adjoint and thus preserves both \v{C}ech nerves and colimits. Now, by definition, $\phi$ is a covering monomorphism if and only if the corresponding sieve $S_\phi$ is a covering sieve. The sieve $S_\phi$ consists of $f\colon c\rightarrow x$ such that $y(f)$ factors through $\phi$ (\cite[Proposition 6.2.2.5]{LurieHTT}). Obviously, $\alpha\in S_\phi$, and thus the sieve $S_\alpha$ generated by $\alpha$ is contained in $S_\phi$. Hence, if $S_\alpha$ is a covering, so is $S_\phi$ and this concludes the proof.
\end{proof}

\subsection{\texorpdfstring{$\infty$}{infinity}-stacks and sheaves on \texorpdfstring{$\infty$}{infinity}-stacks}

Consider the category $\mathfrak{T}=\operatorname{LCHaus}^{2nd}\subset \operatorname{Top}$ of second-countable locally compact Hausdorff topological spaces. We equip $\mathfrak{T}$ with the Grothendieck topology generated by open covers, and consider the $\infty$-category $\operatorname{Shv}(\mathfrak{T})$ of \textit{(second countable locally compact Hausdorff) topological $\infty$-stacks}, that is, space-valued sheaves on $\mathfrak{T}$. The category $\mathfrak{T}$ embeds fully faithfully into $\operatorname{Shv}(\mathfrak{T})$ via the Yoneda embedding. In fact, any topological space $X\in \operatorname{Top}$ defines an $\infty$-stack $\underline{X}\colon Y\mapsto \operatorname{Map}(Y,X)$. By slight abuse of notation, we will also denote the $\infty$-stack $\underline{X}$ by $X$. Unless specifying otherwise, we assume from now on that all our topological spaces belong to $\mathfrak{T}$, but note that we will be encountering other topological spaces as we will be considering posets as topological spaces via the Alexandroff topology (\Cref{stratified homotopy theory}).

\begin{remark}
Restricting our attention to the subcategory $\mathfrak{T}$ ensures that we do not run into set-theoretic difficulties --- we could consider any sufficiently ``small'' category of topological spaces as long as it admits fibre products. Note, however, that we do explicitly use the locally compact Hausdorff assumption for the proper base change theorem (\Cref{proper base change}, see also \Cref{remark proper base-change}).
\end{remark}

We say that a morphism $Y\rightarrow X$ of $\infty$-stacks is \textit{representable} if for any map $S\rightarrow X$ with $S\in \mathfrak{T}$, the pullback $S\times_X Y$ also belongs to $\mathfrak{T}$. If a property $\mathbf{P}$ of maps of topological spaces is invariant under base change, then we say that a representable map $Y\rightarrow X$ of $\infty$-stacks has the property $\mathbf{P}$, if its pullback $S\times_X Y\rightarrow S$ has the property $\mathbf{P}$ for any map $S\rightarrow X$ with $S\in \mathfrak{T}$.

By an \textit{atlas} of an $\infty$-stack $X\in \operatorname{Shv}(\mathfrak{T})$, we mean a topological space $U\in \mathfrak{T}$ equipped with a representable effective epimorphism $f\colon U\rightarrow X$ in $\operatorname{Shv}(\mathfrak{T})$. Consequently, $X$ can be realised as the colimit of the \v{C}ech nerve $\check{C}(f)$ of $f$. We say that an atlas is \textit{étale} if the morphism $U\rightarrow X$ is a local homeomorphism, i.e.~pulling back along a morphism $S\rightarrow X$ with $S\in \mathfrak{T}$ yields a local homeomorphism $S\times_X U\rightarrow S$ of topological spaces. We say that an $\infty$-stack is \textit{étale} if it admits an étale atlas. Note that we assume $U\in \mathfrak{T}$; in particular, we do not allow $U$ to be an uncountable disjoint union. 

Recall that a property $\mathbf{P}$ of topological spaces that is invariant under base change is said to be \textit{local on the target} if the following holds: a map of topological spaces $f\colon Y\rightarrow X$ has the property $\mathbf{P}$ if and only if there is a cover $\mathcal{U}$ of $X$ such that the pullback $Y\times_X U\rightarrow U$ has the property $\mathbf{P}$ for each $U\in \mathcal{U}$. If a property $\mathbf{P}$ of maps of topological spaces is invariant under base change and moreover local on the target, then we say that a map $Y\rightarrow X$ of étale $\infty$-stacks (not necessarily representable) has the property $\mathbf{P}$, if its pullback $U\times_X Y\rightarrow U$ has the property $\mathbf{P}$ for some (hence any) étale atlas $U\rightarrow X$. For our purposes, it suffices to note that the following notions are invariant under base change and local on the target:
\begin{itemize}
\item local homeomorphisms,
\item open (resp., closed) embeddings,
\item proper maps (as our topological spaces are assumed to be Hausdorff).
\end{itemize}

\begin{remark}
The usual $2$-category of topological stacks (stacks of groupoids on $\mathfrak{T}$) can be identified with the full subcategory of $\operatorname{Shv}(\mathfrak{T})$ spanned by those sheaves $X$ for which $X(S)$ is $1$-truncated for all $S\in \mathfrak{T}$ (see also \cite[Example 3.6.3]{LurieTanaka}). Note that we still restrict our attention to second countable locally compact Hausdorff topological spaces. We recall the useful fact that a morphism $U\rightarrow X$ in $\operatorname{Shv}(\mathfrak{T})$ is an effective epimorphism if and only if its $0$-truncation is an effective epimorphism in the ordinary topos $h(\tau_{\leq 0}\operatorname{Shv}(\mathfrak{T}))$ (\cite[Proposition 7.2.1.14]{LurieHTT}). Hence, if for example $U\rightarrow X$ is an atlas in the $2$-category of $1$-truncated stacks, then $U\rightarrow X$ is also an atlas in the $\infty$-category of $\infty$-stacks.
\end{remark}

\begin{observation}\label{n-stacks}
As a converse to the above remark, we note that if an $\infty$-stack $X\in \operatorname{Shv}(\mathfrak{T})$ admits an étale atlas and the diagonal $X\rightarrow X\times X$ is representable, then $X$ is in fact $1$-truncated: indeed, for any $S\in \mathfrak{T}$, we may identify
\begin{align*}
\pi_i(X(S),\alpha)\cong \pi_{i-1}((S\times_X S)(S), (\operatorname{id}, \operatorname{id}))
\end{align*}
where the pullback is taken along the map $S\rightarrow X$ corresponding to the point $\alpha\in X(S)$ and $\operatorname{id}$ denotes the point in $S(S)$ given by the identity map. Noting that
\begin{align*}
S\mathop{\times}_X S\simeq X\mathop{\times}_{X\times X}(S\times S),
\end{align*}
we see that this is representable and, in particular, $0$-truncated.

This observation reveals certain limitations of the present note as we focus on stacks admitting a representable étale atlas. A more complete approach would consider higher geometric stacks in the sense of Simpson, that is, inductively defined $n$-geometric stacks as in e.g. \cite[\S 1.3.3]{ToenVezzosi}. An $\infty$-stack $X$ with an étale atlas and whose diagonal is representable is a $0$-geometric stack in this setting. We content ourselves with the present approach as the existence of an étale atlas serves as a simplifying assumption under which our definitions and observations disentangle considerably; it captures classically arising stacks, providing a tool for studying these in an $\infty$-categorical setting.
\end{observation}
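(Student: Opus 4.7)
The plan is to show that $X(S)$ is a $1$-truncated space for every $S \in \mathfrak{T}$; equivalently, $\pi_i(X(S),\alpha) = 0$ for all $i \geq 2$ and every basepoint $\alpha \in X(S)$. Fix $S$ and $\alpha$ corresponding to a map $f\colon S \to X$.

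First I would identify, for $i \geq 2$, the homotopy group $\pi_i(X(S),\alpha)$ with $\pi_{i-1}((S\times_X S)(S), (\operatorname{id},\operatorname{id}))$, where the fibre product is taken along $f$ on both sides. Since $\operatorname{Map}(S,-)$ preserves limits, there is a natural equivalence $(S\times_X S)(S) \simeq S(S)\times_{X(S)}S(S)$, fitting into a fibre sequence
\begin{equation*}
\Omega_{\alpha}X(S) \to (S\times_X S)(S) \to S(S)\times S(S).
\end{equation*}
The base $S(S)\times S(S)$ is a discrete set because $S$ is representable and hence $0$-truncated, so the connected component of $(\operatorname{id},\operatorname{id})$ in $(S\times_X S)(S)$ is equivalent to $\Omega_{\alpha}X(S)$. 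Passing to $\pi_{i-1}$ then yields the claimed identification.

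Next, I would rewrite the fibre product via the universal property as
\begin{equation*}
S\times_X S \simeq X\times_{X\times X}(S\times S),
\end{equation*}
with $X \to X\times X$ the diagonal. Since $\mathfrak{T}$ is closed under finite products and the diagonal is representable by assumption, this pullback lies in $\mathfrak{T}$. Consequently, $(S\times_X S)(S)$ is a set, so $\pi_{i-1}((S\times_X S)(S), (\operatorname{id},\operatorname{id})) = 0$ whenever $i - 1 \geq 1$, which yields the desired vanishing.

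The delicate point is the loop-space identification in the first paragraph: one must verify carefully that the discreteness of the base in the fibre sequence genuinely makes $\Omega_\alpha X(S)$ recover the basepoint component of $(S\times_X S)(S)$, so that the identification of higher homotopy groups holds based at the correct points. Once this reduction is in place, the representability of the diagonal is invoked in a single stroke, together with the fact that $\mathfrak{T}$ is closed under products, to finish the argument.
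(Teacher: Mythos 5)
Your proposal is correct and follows essentially the same route as the paper: identify $\pi_i(X(S),\alpha)$ with $\pi_{i-1}$ of $(S\times_X S)(S)\simeq S(S)\times_{X(S)}S(S)$ using that the base $S(S)\times S(S)$ is discrete, then rewrite $S\times_X S\simeq X\times_{X\times X}(S\times S)$ and invoke representability of the diagonal to see this is $0$-truncated. The only (harmless) imprecision is that what you recover over $(\operatorname{id},\operatorname{id})$ is the fibre $\Omega_\alpha X(S)$ rather than a single connected component, but since only the basepoint component matters for $\pi_{i-1}$ with $i\geq 2$, the argument goes through as stated.
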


\begin{definition}\label{coarse moduli space}
Let $X\in \operatorname{Shv}(\mathfrak{T})$ be an $\infty$-stack. A \textit{coarse moduli space} for $X$ is a topological space $S\in \operatorname{Top}$ together with a morphism $\tau\colon X\rightarrow S$ such that
\begin{enumerate}
\item $\tau$ induces a bijection on points
\begin{align*}
\pi_0 X(\ast)\xrightarrow{\ \cong \ }\operatorname{Map}_{\operatorname{Top}}(\ast,S).
\end{align*}
\item for any topological space $S'\in \operatorname{Top}$ and any morphism $f\colon X\rightarrow S'$, there is an essentially unique factorisation of $f$ through $\tau$:
\begin{center}
\begin{tikzpicture}
\matrix (m) [matrix of math nodes,row sep=2em,column sep=2em]
  {
X & S \\
  & S' \\
  };
  \path[-stealth]
(m-1-1) edge node[above]{$\tau$} (m-1-2)
(m-1-1) edge node[below left]{$f$} (m-2-2)
;
\path[-stealth, dashed]
(m-1-2) edge (m-2-2)
;
\end{tikzpicture}
\end{center}
\end{enumerate} 
We also say that the morphism $\tau$ exhibits $S$ a \textit{coarse moduli space} for $X$. Note that we do not require $S$ to belong to $\mathfrak{T}$.
\end{definition}

We now move on to considering sheaves on $\infty$-stacks. More precisely, we consider the formal extension of the functor $S\mapsto \operatorname{Shv}(S)$ sending a topological space $S\in \mathfrak{T}$ to the $\infty$-category of space-valued sheaves on $S$:
\begin{align*}
\operatorname{Shv}(\mathfrak{T}) \rightarrow \operatorname{Cat}_\infty^{\operatorname{op}},\quad\quad X\mapsto \operatorname{Shv}(X).
\end{align*}
To this end, consider the commutative diagram below where our desired functor is the top horizontal one. The lower horizontal map is the Yoneda embedding, the left vertical map is the sheafification functor, and the right vertical map is the usual sheaf functor on topological spaces with pullback functoriality, i.e.~sheaves on a given topological space $S$ are the space-valued sheaves on the site of open subsets of $S$.
\begin{center}
\begin{equation}\label{sheaf functor diagram}
\begin{aligned}
\begin{tikzpicture}
\matrix (m) [matrix of math nodes,row sep=2em,column sep=2em]
  {
\operatorname{Shv}(\mathfrak{T}) & \operatorname{Cat}_\infty^{\operatorname{op}} \\
\mathcal{P}(\mathfrak{T}) & \mathfrak{T} \\
  };
  \path[-stealth]
(m-2-1) edge node[left]{$L$} (m-1-1)
(m-2-2) edge node[below]{$y$} (m-2-1) edge node[right]{$\operatorname{Shv}$} (m-1-2)
  ;
\path[dashed,-stealth]
(m-1-1) edge (m-1-2)
(m-2-1) edge (m-1-2)
;
\end{tikzpicture}
\end{aligned}
\end{equation}
\end{center}

By the universal property of presheaf categories, the right vertical sheaf functor on $\mathfrak{T}$ determines a colimit preserving functor out of the presheaf category $\mathcal{P}(\mathfrak{T})$ (the diagonal arrow). This functor in turn is readily verified to send covering monomorphisms to equivalences, and thus it factors through the sheafification functor giving rise to the upper horizontal map.

\begin{definition}
Let $X\in \operatorname{Shv}(\mathfrak{T})$ be an $\infty$-stack. The $\infty$-category $\operatorname{Shv}(X)$ of \textit{(space-valued) sheaves} on $X$ is the value of $X$ under the top horizontal functor in the diagram (\ref{sheaf functor diagram}).  
\end{definition}

\begin{remark}\label{sheaves via cech nerve}
It follows immediately from the definition that if $f\colon U\rightarrow X$ is an atlas for a stack $X\in \operatorname{Shv}(\mathfrak{T})$, then $\operatorname{Shv}(X)$ can be realised as a limit in $\operatorname{Cat}_\infty$, $\operatorname{Shv}(X)\simeq\varprojlim \operatorname{Shv}\circ \check{C}(f)$, given by composing the sheaf functor with the \v{C}ech nerve of $f$ on which sheaves are just sheaves on topological spaces.
\end{remark}

We now consider an a priori different notion of sheaves on a stack. Let $X\in \operatorname{Shv}(\mathfrak{T})$ be an $\infty$-stack, and consider the full subcategory $\mathcal{LH}(X)\subset \operatorname{Shv}(\mathfrak{T})_{/X}$ consisting of representable local homeomorphisms $Y\rightarrow X$ with $Y\in \mathfrak{T}$. We equip $\operatorname{Shv}(\mathfrak{T})$ with the canonical topology as defined in \cite[\S 6.2.4]{LurieHTT}, i.e.~a sieve on an object $C$ is a covering if it contains a collection of morphisms $C_i\rightarrow C$, $i\in I$, such that the induced map $\coprod_{i\in I} C_i\rightarrow C$ is an effective epimorphism. We give $\operatorname{Shv}(\mathfrak{T})_{/X}$ the subsequent big site topology and consider $\mathcal{LH}(X)$ as a subsite. We call $\mathcal{LH}(X)$ the \textit{étale site} of $X$. Note that $\mathcal{LH}(X)$ is in fact a $1$-category as the Yoneda embedding is fully faithful (see however \Cref{restrictions of LH(X)} below).

\begin{definition}
Let $X\in \operatorname{Shv}(\mathfrak{T})$. The $\infty$-category of \textit{(space-valued) étale sheaves} on $X$ is the $\infty$-category of sheaves on the étale site of $X$, $\operatorname{Shv}_{\acute{e}t}(X):=\operatorname{Shv}(\mathcal{LH}(X))$.  
\end{definition}

For a morphism of $\infty$-stacks $f\colon Y\rightarrow X$, pullback along $f$ defines a morphims of sites
\begin{align*}
\mathcal{LH}(X)\rightarrow \mathcal{LH}(Y)
\end{align*}
and via restriction a morphism of sheaf categories
\begin{align*}
f_*\colon \operatorname{Shv}_{\acute{e}t}(Y)\rightarrow \operatorname{Shv}_{\acute{e}t}(X).
\end{align*}

If $f$ is étale representable, then postcomposition by $f$ defines a left adjoint to the morphism given by pullback along $f$:
\begin{align*}
\mathcal{LH}(X)\leftrightarrows \mathcal{LH}(Y).
\end{align*}
We denote the arising adjunction of sheaf categories by
\begin{align*}
f^*\vdash f_*\colon \operatorname{Shv}_{\acute{e}t}(X)\leftrightarrows \operatorname{Shv}_{\acute{e}t}(Y).
\end{align*}

Not surprisingly, considering étale sheaves on an $\infty$-stack is only a reasonable thing to do if $X$ is suitably ``geometric''. If, however, $X$ admits an étale atlas, then étale sheaves recover the previous definition, thus providing a simpler way of studying sheaves on $X$ as they are just sheaves on the site $\mathcal{LH}(X)$. We prove this below, but first of all, we make a small remark and prove a lemma that will come in handy again later on.

\begin{remark}\label{restrictions of LH(X)}
Let us remark that our definition of $\mathcal{LH}(X)$ is slightly restrictive as we ask of the maps $Y\rightarrow X$ that $Y$ is representable by an actual topological space instead of just a topological stack of lesser complexity (see also \Cref{n-stacks} above). This fits our purpose, capturing classical $1$-stacks, so for the sake of simplicity and accessibility, we refrain from treating the more general setting here.
\end{remark}

\begin{lemma}\label{sheaves on open substack generalised}
For any representable local homeomorphism $\alpha\colon Y\rightarrow X$ of $\infty$-stacks with $Y\in \mathfrak{T}$, the left adjoint to restriction along $\alpha$ induces an equivalence $\alpha_!\colon \operatorname{Shv}_{\acute{e}t}(Y)\xrightarrow{\sim} \operatorname{Shv}_{\acute{e}t}(X)_{/y(\alpha)}$. Moreover, this description is compatible with basechange: for a pullback square of $\infty$-stacks as on the left below with $\alpha$ (and hence $\alpha'$) a representable local homeomorphism and $Y\in \mathfrak{T}$ (and hence $Y'\in \mathfrak{T}$), the induced diagram on the right is homotopy commutative.
\begin{center}
\begin{tikzpicture}
\matrix (m) [matrix of math nodes,row sep=2em,column sep=2em]
  {
Y' & X' & & \operatorname{Shv}_{\acute{e}t}(Y') & \operatorname{Shv}_{\acute{e}t}(X')_{/y(\alpha')}\\
Y & X & & \operatorname{Shv}_{\acute{e}t}(Y) & \operatorname{Shv}_{\acute{e}t}(X)_{/y(\alpha)}\\
  };
  \path[-stealth]
(m-1-1) edge node[left]{$f'$} (m-2-1) edge node[above]{$\alpha'$} (m-1-2)
(m-1-2) edge node[right]{$f$} (m-2-2) 
(m-2-1) edge node[below]{$\alpha$} (m-2-2)
(m-1-4) edge node[above]{$\alpha'_!$} (m-1-5)
(m-2-4) edge node[left]{${f'}^*$}(m-1-4)
(m-2-4) edge node[below]{$\alpha_!$} (m-2-5)
(m-2-5.120) edge node[right]{$f^*$} (m-1-5.240)

;
\end{tikzpicture}
\end{center}
\end{lemma}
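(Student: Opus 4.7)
The plan is to apply Lemma~\ref{over categories and sheaves} to the étale site $\mathcal{LH}(X)$ with distinguished object $\alpha$, after establishing an equivalence of $\infty$-sites $\mathcal{LH}(X)_{/\alpha}\simeq \mathcal{LH}(Y)$.

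To obtain this site equivalence, I first verify that for $Z\in \mathfrak{T}$ with a factorisation $Z\xrightarrow{\beta}Y\xrightarrow{\alpha}X$, the composite $\alpha\circ\beta$ is a representable local homeomorphism if and only if $\beta$ is. The reverse direction is a composition of local homeomorphisms; the forward direction uses that the projection $Z\times_X Y\to Z$ is a local homeomorphism by base change of $\alpha$, so that its section coming from $\beta$ is an open embedding, and composing with the other projection $Z\times_X Y\to Y$ (a local homeomorphism since $\alpha$ is) recovers $\beta$. Morphisms match tautologically, since a morphism over $\alpha$ in $\mathcal{LH}(X)_{/\alpha}$ is the same datum as a morphism over $Y$ in $\mathcal{LH}(Y)$. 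The two site structures also agree, because in both cases a cover of a given object is a family whose fold map is an effective epimorphism in $\operatorname{Shv}(\mathfrak{T})$.

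The first equivalence then follows directly from Lemma~\ref{over categories and sheaves}. For the base change assertion, consider the morphism of sites $\Phi = (-)\times_X X'\colon \mathcal{LH}(X)\to \mathcal{LH}(X')$; this is well-defined since representable local homeomorphisms and effective epimorphisms are stable under base change in the $\infty$-topos $\operatorname{Shv}(\mathfrak{T})$, and its associated sheaf-level left adjoint to restriction is precisely the functor $f^*$ appearing in the target diagram. Note that $\Phi(\alpha) = \alpha'$, and under the identifications of the previous paragraph the induced morphism on slice sites $\mathcal{LH}(X)_{/\alpha}\to \mathcal{LH}(X')_{/\alpha'}$ corresponds to the pullback of sites $\mathcal{LH}(Y)\to \mathcal{LH}(Y')$ along $f'$, whose sheaf-level left adjoint is $f'^*$. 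The second part of Lemma~\ref{over categories and sheaves}, applied to $\Phi$ with distinguished object $\alpha$, then yields exactly the required homotopy commutative square (flipped so as to put the unprimed row on the bottom).

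The main obstacle is the careful verification of the site equivalence $\mathcal{LH}(X)_{/\alpha}\simeq \mathcal{LH}(Y)$, especially matching the two notions of covers; once this identification is in place, both assertions follow formally from Lemma~\ref{over categories and sheaves} together with the stability of local homeomorphisms and effective epimorphisms under base change.
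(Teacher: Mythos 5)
Your proposal is correct and follows essentially the same route as the paper: the paper's proof likewise reduces everything to \Cref{over categories and sheaves} via the equivalence of $\infty$-sites $\mathcal{LH}(Y)\simeq\mathcal{LH}(X)_{/\alpha}$, citing the two-out-of-three property of representable local homeomorphisms for essential surjectivity --- a point you usefully spell out. One cosmetic slip: the projection $Z\times_X Y\to Y$ is a local homeomorphism because it is the base change of $\alpha\circ\beta$ along $\alpha$ (it is the other projection $Z\times_X Y\to Z$ that is the base change of $\alpha$), but the conclusion is unaffected since $\alpha\circ\beta$ is assumed to be a local homeomorphism in that direction of the argument.
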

\begin{proof}
This follows immediately from \Cref{over categories and sheaves} and the observation that post-composition by $\alpha$ defines an equivalence $\mathcal{LH}(Y)\xrightarrow{\sim}\mathcal{LH}(X)_{/\alpha}$ of $\infty$-sites, where the latter is equipped with the big site structure. Note that essential surjectivity follows from the fact that étale representable maps satisfy the two-out-of-three property.
\end{proof}

For any $\infty$-stack $X\in \operatorname{Shv}(\mathfrak{T})$, we have a comparison functor
\begin{align*}
\Psi_X\colon\operatorname{Shv}(X)\rightarrow \operatorname{Shv}_{\acute{e}t}(X)
\end{align*}
induced by the left adjoints $\Psi_S\colon \operatorname{Shv}(S)\rightarrow \operatorname{Shv}_{\acute{e}t}(S)$ to restriction along the inclusion of sites $\mathcal{U}(S)\hookrightarrow \mathcal{LH}(S)$ for any $S\in \mathfrak{T}$.

\begin{proposition}\label{sheaves vs etale sheaves}
If $X\in \operatorname{Shv}(\mathfrak{T})$ admits an étale atlas, then the functor $\Psi_X\colon\operatorname{Shv}(X)\rightarrow \operatorname{Shv}_{\acute{e}t}(X)$ is an equivalence.
\end{proposition}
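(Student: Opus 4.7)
The overall strategy is to prove descent on both sides along the given étale atlas $f\colon U\to X$, reducing the comparison to the classical statement that for a topological space the ordinary site of open subsets and the site of local homeomorphisms have equivalent sheaf categories.

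Fix an étale atlas $f\colon U\to X$ and set $U_n:=\check{C}(f)_n=U\times_X\cdots\times_X U$, which lies in $\mathfrak{T}$ since $f$ is representable. By \Cref{sheaves via cech nerve}, $\operatorname{Shv}(X)\simeq \varprojlim_{[n]\in\Delta}\operatorname{Shv}(U_n)$. For the étale side I would argue as follows: the atlas $f$ is an effective epimorphism in $\operatorname{Shv}(\mathfrak{T})$, so by \Cref{yoneda maps atlas to effective epi} applied to the site $\mathcal{LH}(X)$ (with $X\in\mathcal{LH}(X)$ the terminal object), $y(f)\colon y(U)\to \ast$ is an effective epimorphism in the $\infty$-topos $\operatorname{Shv}_{\acute{e}t}(X)$. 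Standard $\infty$-topos descent (Giraud) then gives $\operatorname{Shv}_{\acute{e}t}(X)\simeq \varprojlim_{[n]\in\Delta}\operatorname{Shv}_{\acute{e}t}(X)_{/y(f)^{\times(n+1)}}$, and since the Yoneda embedding preserves limits, $y(f)^{\times(n+1)}\simeq y(U_n)$. Invoking \Cref{sheaves on open substack generalised} to identify $\operatorname{Shv}_{\acute{e}t}(X)_{/y(U_n)}\simeq \operatorname{Shv}_{\acute{e}t}(U_n)$, we obtain $\operatorname{Shv}_{\acute{e}t}(X)\simeq \varprojlim_{[n]\in\Delta}\operatorname{Shv}_{\acute{e}t}(U_n)$.

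Since both descent descriptions are natural in the Čech nerve and the comparison $\Psi$ was constructed precisely by packaging together the $\Psi_S$ for $S\in\mathfrak{T}$ in a way compatible with pullback (both $\operatorname{Shv}$ and $\operatorname{Shv}_{\acute{e}t}$ carry pullback functoriality along maps in $\mathfrak{T}$, and $\Psi_S$ as a left adjoint to restriction is natural in $S$), the comparison $\Psi_X$ is identified with the limit of the $\Psi_{U_n}$. It therefore suffices to show that $\Psi_S$ is an equivalence for each $S\in\mathfrak{T}$. This is the familiar fact that open embeddings form a basis for the étale topology on a topological space --- every local homeomorphism $V\to S$ is, locally on $V$, an open embedding into $S$ --- so the inclusion $\mathcal{U}(S)\hookrightarrow \mathcal{LH}(S)$ induces an equivalence of sheaf categories by the comparison lemma for sites.

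The main obstacle I anticipate is bookkeeping: one must carefully check that $\Psi_X$ really does correspond to $\varprojlim \Psi_{U_n}$ under the two descent equivalences. This amounts to verifying the naturality of $\Psi_S$ in $S\in\mathfrak{T}$ (with respect to pullback along arbitrary continuous maps, not only local homeomorphisms) and matching the two universal constructions used to extend $\operatorname{Shv}$ and $\operatorname{Shv}_{\acute{e}t}$ from $\mathfrak{T}$ to all of $\operatorname{Shv}(\mathfrak{T})$. The remaining step, the equivalence $\Psi_S$ itself, is classical and follows from the comparison lemma.
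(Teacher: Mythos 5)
Your proof follows essentially the same route as the paper's: \v{C}ech descent on both sides of the comparison functor, reduction to the case of a topological space, and the classical fact that every local homeomorphism over a space refines to an open cover. One step needs repair, however. You invoke \Cref{yoneda maps atlas to effective epi} ``applied to the site $\mathcal{LH}(X)$ (with $X\in\mathcal{LH}(X)$ the terminal object)'', but by definition the objects of $\mathcal{LH}(X)$ are representable local homeomorphisms $Y\to X$ with $Y\in\mathfrak{T}$, so $\operatorname{id}_X$ is an object of $\mathcal{LH}(X)$ only when $X$ itself is a space; for a genuine stack the terminal object of $\operatorname{Shv}_{\acute{e}t}(X)$ is not representable and the lemma cannot be applied verbatim to produce the effective epimorphism $y(U)\rightarrow\tau$. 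The paper's workaround is to apply \Cref{yoneda maps atlas to effective epi} in the big slice site $\operatorname{Shv}(\mathfrak{T})_{/X}$, where $\operatorname{id}_X$ \emph{is} the terminal object and $f$ generates a covering sieve because it is an effective epimorphism, and then to transport the conclusion along the restriction functor $\operatorname{Shv}(\operatorname{Shv}(\mathfrak{T})_{/X})\rightarrow\operatorname{Shv}_{\acute{e}t}(X)$, which is a left exact left adjoint and hence preserves effective epimorphisms and \v{C}ech nerves. With that substitution your argument goes through; the remaining bookkeeping you anticipate (naturality of $\Psi_S$ in $S$, and the base-change-compatible identification $\operatorname{Shv}_{\acute{e}t}(X)_{/y(U_n\rightarrow X)}\simeq\operatorname{Shv}_{\acute{e}t}(U_n)$ from \Cref{sheaves on open substack generalised}) is exactly what the paper's proof records.
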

\begin{proof}
Observe first of all that if $X\in \mathfrak{T}$, then the statement is immediate since the comparison map is given by the left adjoint to restriction along the inclusion $\mathcal{U}(X)\hookrightarrow \mathcal{LH}(X)$ and any local homeomorphism over $X$ can be refined to an open cover. For the general case, let $f\colon U\rightarrow X$ be an étale atlas and let $U_\bullet:=\check{C}(f)\colon \Delta^{\operatorname{op}}\rightarrow \operatorname{Shv}(\mathfrak{T})$ denote the \v{C}ech nerve of $f$. The comparison functor for $X$ then factors as in the diagram below where we see that the lower horizontal map is an equivalence by the observation above and the left vertical map is an equivalence as $\operatorname{Shv}(-)$ preserves colimits.
\begin{center}
\begin{tikzpicture}
\matrix (m) [matrix of math nodes,row sep=2em,column sep=2em]
  {
\operatorname{Shv}(X) & & \operatorname{Shv}_{\acute{e}t}(X) \\
\varprojlim\operatorname{Shv}(U_\bullet) & & \varprojlim\operatorname{Shv}_{\acute{e}t}(U_\bullet) \\
  };
  \path[-stealth]
(m-1-1) edge (m-2-1) edge node[above]{$\Psi_X$} (m-1-3)
(m-1-3) edge (m-2-3) 
(m-2-1) edge node[below]{$\varprojlim \Psi_{U_\bullet}$} (m-2-3)
;
\end{tikzpicture}
\end{center}

Thus it suffices to see that the right vertical map is an equivalence. For any $[n]\in \Delta$, we have a homotopy commutative diagram as below where the left vertical map is given by restriction, the top horizontal map is the canonical equivalence to the comma category over the terminal sheaf $\tau$, the lower horizontal map is the equivalence of \Cref{sheaves on open substack generalised} and finally, the right-hand triangle is the diagram of comma categories induced by the obvious underlying maps of objects (here we are considering pullback functors, see e.g.~\cite[\S 6.1.1]{LurieHTT}).
\begin{center}
\begin{tikzpicture}
\matrix (m) [matrix of math nodes,row sep=2em,column sep=2em]
  {
\operatorname{Shv}_{\acute{e}t}(X) & \operatorname{Shv}_{\acute{e}t}(X)_{/\tau} \\
\operatorname{Shv}_{\acute{e}t}(U_n) & \operatorname{Shv}_{\acute{e}t}(X)_{/y(U_n\rightarrow X)} & \operatorname{Shv}_{\acute{e}t}(X)_{/\varinjlim y(U_\bullet\rightarrow X)} \\
  };or
  \path[-stealth]
(m-1-1) edge node[above]{$\sim$} (m-1-2) edge (m-2-1)
(m-1-2) edge (m-2-2) edge (m-2-3)
(m-2-1) edge node[below]{$\sim$} (m-2-2)
(m-2-3) edge (m-2-2)
;
\end{tikzpicture}
\end{center}

Commutativity of the square follows by uniqueness of right adjoints as we recall that the pullback functor of comma categories is right adjoint to the functor of comma categories given by postcomposition with the specified map (\cite[Lemma 6.1.1.1]{LurieHTT}). As the equivalence of \Cref{sheaves on open substack generalised} is compatible with basechange, it follows that we can factorise the map $\operatorname{Shv}_{\acute{e}t}(X)\rightarrow \varprojlim\operatorname{Shv}_{\acute{e}t}(U_\bullet)$ as follows
\begin{center}
\begin{tikzpicture}
\matrix (m) [matrix of math nodes,row sep=2em,column sep=2em]
  {
\operatorname{Shv}_{\acute{e}t}(X) & \varprojlim\operatorname{Shv}_{\acute{e}t}(U_\bullet) \\
\operatorname{Shv}_{\acute{e}t}(X)_{/\varinjlim y(U_\bullet\rightarrow X)} & \varprojlim\operatorname{Shv}_{\acute{e}t}(X)_{/y(U_\bullet\rightarrow X)} \\
  };
  \path[-stealth]
(m-1-1) edge (m-2-1) edge (m-1-2)
(m-1-2) edge node[right]{$\sim$}(m-2-2) 
(m-2-1) edge (m-2-2)
;
\end{tikzpicture}
\end{center}
The lower horizontal map is an equivalence by descent (\cite[\S 6.1.3, more specifically Theorem 6.1.3.9]{LurieHTT}). Now, we claim that the map $\varinjlim y(U_\bullet\rightarrow X)\rightarrow \tau$ is in fact an equivalence; this will finish the proof and is a consequence of $f$ being étale (which we note that we have not actually used yet). To prove the claim, consider the unique morphism $y(f)\rightarrow \tau$. By abuse of notation we will consider this as a morphism in both $\operatorname{Shv}(\operatorname{Shv}(\mathfrak{T})_{/X})$ and $\operatorname{Shv}_{\acute{e}t}(X)$. By \Cref{yoneda maps atlas to effective epi}, $y(f)\rightarrow \tau$ is an effective epimorphism in $\operatorname{Shv}(\operatorname{Shv}(\mathfrak{T})_{/X})$ as it is the image of $f\rightarrow \operatorname{id}_X$ which generates a covering sieve on $\operatorname{id}_X$, $f$ being an effective epimorphism. Since the restriction functor $\operatorname{Shv}(\operatorname{Shv}(\mathfrak{T})_{/X})\rightarrow \operatorname{Shv}_{\acute{e}t}(X)$ is a left exact left adjoint, it preserves effective epimorphisms and \v{C}ech nerves (\cite[Remark 6.2.3.6]{LurieHTT}), and hence $y(f)\rightarrow \tau$ is an effective epimorphism in $\operatorname{Shv}_{\acute{e}t}(X)$ whose \v{C}ech nerves identifies with $y(U_\bullet\rightarrow X)$, and we are done.
\end{proof}

In view of the result above, we will from now on not distinguish between $\operatorname{Shv}_{\operatorname{\acute{e}t}}(X)$ and $\operatorname{Shv}(X)$ for étale $\infty$-stacks. Moreover, it leads us to make the following definition.

\begin{definition}
Let $\mathcal{E}$ be an $\infty$-category. For an étale $\infty$-stack $X$, we define the $\infty$-category of $\mathcal{E}$\textit{-valued sheaves} on $X$ as
\begin{align*}
\operatorname{Shv}(X;\mathcal{E}):=\operatorname{Shv}(\mathcal{LH}(X); \mathcal{E}),
\end{align*}
that is, the $\infty$-category of $\mathcal{E}$-valued sheaves on the étale site of $X$.  
\end{definition}

We will only be interested in sheaves valued in compactly generated $\infty$-categories.

\subsection{Descent results}

Recall from the previous section that a morphism $Y\rightarrow X$ of étale $\infty$-stacks is an \textit{open} (resp., \textit{closed}) embedding if for some (hence any) étale atlas $U\rightarrow X$, the pullback  $U \times_X Y\rightarrow U$ is an open (resp., closed) embedding. Likewise, a morphism $Y\rightarrow X$ of étale $\infty$-stacks is \textit{proper} if for some (hence any) étale atlas $U\rightarrow X$, the pullback $U\times_X Y\rightarrow U$ is a proper map of topological spaces.

\begin{definition}
Let $X$ be an étale $\infty$-stack. An \textit{open} (resp., \textit{closed}) \textit{substack} of $X$ is an isomorphism class of open (resp., closed) embeddings. 
\end{definition}

\begin{remark}
Let $X$ be an $\infty$-stack with étale atlas $\pi\colon U\rightarrow X$. An open (resp., closed) substack $V\hookrightarrow X$  gives rise to an open (resp., closed) subspace $W=\pi^{-1}(V)\subset U$ by pulling back along $\pi$, and this subspace satisfies that the pullbacks along the two projection maps $\operatorname{pr}_1,\operatorname{pr}_2\colon U\times_X U \rightarrow U$ coincide: $\operatorname{pr}_1^{-1}(W)=\operatorname{pr}_2^{-1}(W)\subset U\times_X U$. Conversely, an open (resp., closed) subspace $W\subset U$ satisfying that $\operatorname{pr}_1^{-1}(W)=\operatorname{pr}_2^{-1}(W)\subset U\times_X U$ defines an open (resp., closed) substack $V\hookrightarrow X$ as it defines an open (resp., closed) subsimplicial space of the \v{C}ech nerve $\check{C}(\pi)$.
\end{remark}

We have the following proper base-change theorem generalising \cite[Corollary 7.3.1.18]{LurieHTT} (see also \cite[Theorem 2.12]{ClausenOrsnesJansen}).

\begin{theorem}\label{proper base change}
Let
\begin{center}
\begin{tikzpicture}
\matrix (m) [matrix of math nodes,row sep=2em,column sep=2em]
  {
X' & X \\
Y' & Y \\
  };
  \path[-stealth]
(m-1-1) edge node[left]{$f'$} (m-2-1) edge node[above]{$g'$} (m-1-2)
(m-1-2) edge node[right]{$f$} (m-2-2) 
(m-2-1) edge node[below]{$g$} (m-2-2)
;
\end{tikzpicture}
\end{center}
be a pullback diagram of étale $\infty$-stacks with $f$ proper. Then the induced commutative diagram of $\infty$-categories resulting from applying $\operatorname{Shv}(-)$,
\begin{center}
\begin{tikzpicture}
\matrix (m) [matrix of math nodes,row sep=2em,column sep=2em]
  {
\operatorname{Shv}(X') & \operatorname{Shv}(X) \\
\operatorname{Shv}(Y') & \operatorname{Shv}(Y) \\
  };
  \path[-stealth]
(m-2-1) edge node[left]{$f'^*$} (m-1-1)
(m-1-2) edge node[above]{$g'^*$} (m-1-1)
(m-2-2) edge node[right]{$f^*$} (m-1-2) 
(m-2-2) edge node[below]{$g^*$} (m-2-1)
;
\end{tikzpicture}
\end{center}
is \emph{right adjointable} (or \emph{right Beck--Chevalley}): the vertical maps $f^*$ and $f'^*$ have right adjoints $f_*$ and $f'_*$, respectively, and the natural comparison map is an equivalence
\begin{align*}
g^*f_*\xrightarrow{\sim} f'_*g'^*
\end{align*}
\end{theorem}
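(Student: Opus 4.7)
The plan is to reduce the statement to the classical proper base change theorem for sheaves on topological spaces (\cite[Corollary 7.3.1.18]{LurieHTT}) by descending along the Čech nerves of suitable étale atlases. Since all sheaf $\infty$-categories involved are presentable and the pullback functors are colimit-preserving (inherited from the sheaf functor $\operatorname{Shv}\colon \operatorname{Shv}(\mathfrak{T})\to\operatorname{Cat}_\infty^{\operatorname{op}}$), the right adjoints $f_*$ and $f'_*$ exist by the adjoint functor theorem, so the content of the theorem is the Beck--Chevalley equivalence.

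I would proceed in two steps. First, handle the case $Y\in\mathfrak{T}$, so that $X\in\mathfrak{T}$ by properness and $f$ is an honest proper map of topological spaces. Fix an étale atlas $W\to Y'$ with $W\in\mathfrak{T}$; then $W\times_Y X\to X'$ is an étale atlas with $W\times_Y X\in\mathfrak{T}$. The Čech nerves of $W\to Y'$ and $W\times_Y X\to X'$ have levels in $\mathfrak{T}$, given by $W_n$ and $W_n\times_Y X$ respectively, and for each $n$ these fit into a pullback square of topological spaces over $X\to Y$ with left vertical $W_n\times_Y X\to W_n$ proper. Classical proper base change gives right adjointability at each level; combined with the descent identifications $\operatorname{Shv}(Y')\simeq\varprojlim\operatorname{Shv}(W_n)$ and $\operatorname{Shv}(X')\simeq\varprojlim\operatorname{Shv}(W_n\times_Y X)$ (\Cref{sheaves via cech nerve}), this yields the desired equivalence in this special case.

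For the general case, apply the previous step at each level of the Čech nerve of an étale atlas $V\to Y$ with $V\in\mathfrak{T}$. By properness of $f$, each $V_n\times_Y X$ is in $\mathfrak{T}$ with $V_n\times_Y X\to V_n$ proper; thus at cosimplicial level $n$ the pullback square $V_n\times_Y X'\to V_n\times_Y X$ over $V_n\times_Y Y'\to V_n$ satisfies the hypotheses of the special case, and the Beck--Chevalley equivalence holds there. Since the sheaf functor preserves colimits, the four corners of the original square identify with the limits of their cosimplicial counterparts.

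The main obstacle is then the formal propagation of right adjointability from each level to the totalisation. A further application of classical proper base change to the pullback squares $V_m\times_Y X\to V_n\times_Y X$ over $V_m\to V_n$ ensures that the levelwise right adjoints commute with the cosimplicial transition maps. One then invokes the general principle that a limit in $\operatorname{Cat}_\infty$ of a cosimplicial diagram of right adjointable squares with right adjointable transition maps is itself right adjointable, with Beck--Chevalley map computed as the limit of the levelwise ones --- essentially, the observation that right-adjointable morphisms form a subcategory of $\operatorname{Fun}([1],\operatorname{Cat}_\infty)$ stable under limits. Carefully tracing this identification, so that the resulting limit of levelwise equivalences is actually the comparison map $g^*f_*\to f'_*g'^*$ for the original square, is the most delicate part of the argument.
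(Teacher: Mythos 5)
Your proposal is correct in outline and follows the same basic strategy as the paper: apply $\operatorname{Shv}(-)$ to \v{C}ech nerves of étale atlases so as to reduce to the proper base change theorem for locally compact Hausdorff spaces, and then propagate right adjointability through the resulting limits. The paper does this in a single step, pulling one atlas $V\to Y$ back to all four corners and invoking the topological statement (\cite[Theorem 2.12]{ClausenOrsnesJansen}) levelwise together with \cite[Theorem 2.1.9]{AradCarmeliSchlank} --- the latter being exactly the ``limit of right adjointable squares'' principle you describe in your final paragraph but do not cite. Your two-step decomposition is a genuinely different organisation and is in fact more careful on one point: in the paper's version the pulled-back atlas $Y'\times_Y V\to Y'$ has \v{C}ech levels $Y'\times_Y V_n$, which are honest topological spaces only when $g$ is representable, whereas in your special case ($Y\in\mathfrak{T}$, atlas chosen on $Y'$) the levelwise squares really do have all four corners in $\mathfrak{T}$. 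The one place your sketch is incomplete is the transition squares of the general step: besides the squares $V_m\times_Y X\to V_n\times_Y X$ over $V_m\to V_n$ (spaces, classical proper base change), the limit principle also requires right adjointability of the primed transition squares $V_m\times_Y X'\to V_n\times_Y X'$ over $V_m\times_Y Y'\to V_n\times_Y Y'$, whose corners are stacks with base a stack, so neither the classical theorem nor your special case applies to them directly. Since their horizontal maps are étale, their right adjointability instead follows from étale base change --- the slice description of \Cref{sheaves on open substack generalised} and its compatibility with pullback, together with the standard equivalence between vertical right adjointability and horizontal left adjointability --- or by running the descent once more. With that supplied, your argument closes and proves the same statement as the paper's.
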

\begin{proof}
Let $V\rightarrow Y$ be an étale atlas for $Y$, and let
\begin{align*}
U:=X\times_Y V\rightarrow X,\qquad U':=X'\times_Y V\rightarrow X',\qquad V':=Y'\times_Y V\rightarrow Y'
\end{align*}
denote the atlases obtained by pulling back $V\rightarrow Y$. The \v{C}ech nerves of these atlases give rise to a commutative square of $\Delta^{op}$-shaped diagrams of locally compact Hausdorff topological spaces:
\begin{center}
\begin{tikzpicture}
\matrix (m) [matrix of math nodes,row sep=2em,column sep=2em]
  {
U'_\bullet & U_\bullet \\
V'_\bullet & V_\bullet \\
  };
  \path[-stealth]
(m-1-1) edge node[left]{$f'_\bullet$} (m-2-1) edge node[above]{$g'_\bullet$} (m-1-2)
(m-1-2) edge node[right]{$f_\bullet$} (m-2-2) 
(m-2-1) edge node[below]{$g_\bullet$} (m-2-2)
;
\end{tikzpicture}
\end{center}

Now, the result follows directly from the proper base-change theorem for locally compact Hausdorff topological spaces (\cite[Theorem 2.12]{ClausenOrsnesJansen}) and \cite[Theorem 2.1.9]{AradCarmeliSchlank} applied to the diagram above.
\end{proof}

\begin{remark}\label{remark proper base-change}
Note that in the proof above, we explicitly use that our topological spaces are locally compact Hausdorff. Thus one should be careful to include this hypothesis if working with a category of topological spaces different from $\mathfrak{T}$ by requiring the existence of a locally compact Hausdorff atlas. Note also, however, that the proper base change theorem for topological spaces holds without the locally compact Hausdorff hypothesis if the proper map is also a closed inclusion (\cite[\S 7.3.2]{LurieHTT}), so in fact the results \Cref{sheaves on closed substack}, \Cref{cdh descent} and \Cref{descent for closed covers} below would hold true for some less restrictive choices of base category $\mathfrak{T}$. One should then be careful to make an appropriate definition of proper maps if the property of being proper is not local on the target. It may in general be possible to relax the definition of proper maps of topological $\infty$-stacks slightly and still obtain a proper base change result as above (see e.g. \cite[\S 4]{PortaYu16} and \cite[Theorem 6.8]{PortaYu21} for ideas in this direction). Note also that in the stratified setting, proper base change may hold for constructible sheaves even if it is not known to hold at the level of sheaves (\cite[Proposition 6.47]{PortaTeyssier}).
\end{remark}

Let us first of all use this result to prove the following lemma identifying sheaves on a closed substack. Note that for an open substack $j\colon U\hookrightarrow X$, \Cref{sheaves on open substack generalised} provides an equivalence
\begin{align*}
j_!\colon \operatorname{Shv}(U)\xrightarrow{\simeq}\operatorname{Shv}(X)_{/y(U)}
\end{align*}
that is compatible with base-change. The identification below is complementary to this (see also \cite[\S 7.2.3]{LurieHTT}).

\begin{lemma}\label{sheaves on closed substack}
Let $X$ be an étale $\infty$-stack and $i\colon Z\hookrightarrow X$ a closed substack with complement $U\hookrightarrow X$. The pushforward map yields an equivalence
\begin{align*}
i_*\colon \operatorname{Shv}(Z)\xrightarrow{\ \sim\ } \operatorname{ker}\big(\operatorname{Shv}(X)\rightarrow \operatorname{Shv}(U)\big)
\end{align*}
where the right hand side is the full subcategory spanned by the sheaves on $X$ which restrict to the terminal sheaf on $U$. Moreover, this identification is compatible with base-change: if $Z'\hookrightarrow X'$ is the pullback of $Z\hookrightarrow X$ along a morphism $f\colon X'\rightarrow X$ of $\infty$-stacks, then the induced diagram of sheaf categories below commutes, where $U'\hookrightarrow X'$ is the complement of $Z'$.
\begin{center}
\begin{tikzpicture}
\matrix (m) [matrix of math nodes,row sep=2em,column sep=2em]
  {
\operatorname{Shv}(Z) & \operatorname{ker}\big(\operatorname{Shv}(X)\rightarrow \operatorname{Shv}(U)\big) \\
\operatorname{Shv}(Z') & \operatorname{ker}\big(\operatorname{Shv}(X')\rightarrow\operatorname{Shv}(U')\big) \\
  };
  \path[-stealth]
(m-1-1) edge node[left]{$f'^*$} (m-2-1)
(m-1-2) edge node[above]{$i_*$} (m-1-1)
(m-1-2.220) edge node[right]{$f^*$} (m-2-2.140) 
(m-2-2) edge node[below]{$i'_*$} (m-2-1)
;
\end{tikzpicture}
\end{center}
\end{lemma}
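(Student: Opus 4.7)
The plan is to reduce to the analogous statement for topological spaces (\cite[\S 7.3.2]{LurieHTT}) by étale descent, invoking \Cref{proper base change} to transport the identification through the Čech nerve. More concretely, I would pick an étale atlas $\pi\colon V\to X$, form its Čech nerve $V_\bullet:=\check{C}(\pi)$, and pull back to obtain cosimplicial diagrams of topological spaces $Z_\bullet=Z\times_X V_\bullet$ and $U_\bullet=U\times_X V_\bullet$. At each level $n$ one has a closed embedding $i_n\colon Z_n\hookrightarrow V_n$ with open complement $j_n\colon U_n\hookrightarrow V_n$. By \Cref{sheaves via cech nerve}, $\operatorname{Shv}(X)\simeq \varprojlim \operatorname{Shv}(V_\bullet)$ and likewise for $Z$ and $U$.

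At each level the classical open-closed recollement on topological spaces gives that $(i_n)_*$ is fully faithful with essential image $\ker((j_n)^*)$. To promote these levelwise identifications into a morphism of cosimplicial diagrams, I would use the Beck--Chevalley identity $\alpha^*(i_n)_*\simeq (i_m)_*(\alpha_Z)^*$ for each structure map $\alpha\colon V_m\to V_n$ (with $\alpha_Z\colon Z_m\to Z_n$ its restriction to $Z$), which is precisely \Cref{proper base change} applied to the pullback square of $i_n$ along $\alpha$, since closed embeddings are proper. Passing to the limit then realises $i_*\colon \operatorname{Shv}(Z)\to\operatorname{Shv}(X)$ as $\varprojlim (i_n)_*$, which is fully faithful as a limit of fully faithful functors.

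The essential image is then read off from descent: a sheaf $\mathcal{F}\in\operatorname{Shv}(X)$ corresponds to a compatible system $(\mathcal{F}_n)$ whose restriction along $j$ is terminal iff each $(j_n)^*\mathcal{F}_n$ is terminal, iff each $\mathcal{F}_n$ lies in the image of $(i_n)_*$, iff $\mathcal{F}$ lies in the image of $i_*$. Finally, the base-change compatibility in the last square is a direct application of \Cref{proper base change} to the pullback square with vertical closed embeddings $i\colon Z\hookrightarrow X$ and $i'\colon Z'\hookrightarrow X'$, which gives $f^*i_*\simeq i'_*f'^*$ and hence the desired commutativity after identifying each $\operatorname{Shv}(Z)$ with the corresponding kernel via $i_*$.

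The step I expect to be the main obstacle is ensuring that the pointwise equivalences $\operatorname{Shv}(Z_n)\simeq \ker((j_n)^*)$ assemble, with all the higher coherences, into an equivalence of limit $\infty$-categories. This is clean because \Cref{proper base change} is stated in its Beck--Chevalley form — precisely what is needed for the identification to descend through the Čech nerve — and because fully faithful inclusions and kernel subcategories (in the sense of the fibre over the terminal sheaf) behave correctly under limits of $\infty$-categories.
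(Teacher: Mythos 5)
Your proposal is correct and follows essentially the same route as the paper: reduce to the closed/open recollement for topological spaces (\cite[\S 7.3.2]{LurieHTT}), pass to the limit over the \v{C}ech nerve of an étale atlas using the Beck--Chevalley identifications supplied by \Cref{proper base change}, detect the terminal-restriction condition on the atlas, and deduce the final base-change square again from \Cref{proper base change}. The only (cosmetic) quibble is that the \v{C}ech nerve and its pullbacks are simplicial, not cosimplicial, diagrams of spaces --- it is the induced diagram of sheaf categories that is cosimplicial.
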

\begin{proof}
The statement is true for a closed inclusion of topological spaces by \cite[Corollary 7.3.2.11]{LurieHTT}. The general statement for $\infty$-stacks then follows by taking the limit over the \v{C}ech nerves and noting that a sheaf on $X$ restricts to the terminal sheaf on $U$ if and only if it restricts to the terminal sheaf on the corresponding open subspace of an atlas. Compatibility with base-change follows from the proper base-change result above and the fact that the restriction functor, being left exact, preserves terminal objects.
\end{proof}

As in \cite{ClausenOrsnesJansen}, we use proper base-change to establish ``proper descent'' results.

\begin{corollary}\label{cdh descent}
Suppose
\begin{center}
\begin{tikzpicture}
\matrix (m) [matrix of math nodes,row sep=2em,column sep=2em]
  {
X' & X \\
Z & Y \\
  };
  \path[-stealth]
(m-1-1) edge node[left]{$f'$} (m-2-1) edge node[above]{$i'$} (m-1-2)
(m-1-2) edge node[right]{$f$} (m-2-2) 
(m-2-1) edge node[below]{$i$} (m-2-2)
;
\end{tikzpicture}
\end{center}
is a pullback square of étale $\infty$-stacks such that
\begin{enumerate}
\item $f$ is proper;
\item $i$ is a closed embedding;
\item the pullback of $f$ to the open complement $Y\smallsetminus Z$ is an isomorphism.
\end{enumerate}
Then applying $\operatorname{Shv}(-)$ with pullback functoriality yields a pullback diagram, so
\begin{align*}
\operatorname{Shv}(Y)\xrightarrow{\sim} \operatorname{Shv}(X)\mathop{\times}_{\operatorname{Shv}(X')}\operatorname{Shv}(Z)
\end{align*}
\end{corollary}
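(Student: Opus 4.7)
The plan is to reduce the assertion to its analogue for topological spaces via atlas descent, and then invoke proper cdh descent at the topological level. Concretely, I would choose an étale atlas $V \to Y$ with Čech nerve $V_\bullet$, and pull back along each $V_n \to Y$ to obtain simplicial locally compact Hausdorff spaces $X_\bullet$, $Z_\bullet$, $X'_\bullet$ together with a simplicial pullback square
\begin{center}
\begin{tikzpicture}
\matrix (m) [matrix of math nodes,row sep=2em,column sep=2em]
{
X'_n & X_n \\
Z_n & V_n \\
};
\path[-stealth]
(m-1-1) edge (m-2-1) edge (m-1-2)
(m-1-2) edge node[right]{$f_n$} (m-2-2)
(m-2-1) edge node[below]{$i_n$} (m-2-2)
;
\end{tikzpicture}
\end{center}
At each level $[n]\in \Delta$ this is again a cdh square: properness, being a closed embedding, and being an isomorphism over the open complement are all stable under base change. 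Moreover $X_\bullet$, $Z_\bullet$ and $X'_\bullet$ are the Čech nerves of the étale atlases obtained by pulling back $V \to Y$ along $f$, $i$ and $f\circ i' = i\circ f'$, respectively, so \Cref{sheaves via cech nerve} identifies $\operatorname{Shv}(Y) \simeq \varprojlim_{[n]} \operatorname{Shv}(V_n)$ and similarly for $\operatorname{Shv}(X)$, $\operatorname{Shv}(Z)$ and $\operatorname{Shv}(X')$.

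Since pullbacks in $\operatorname{Cat}_\infty$ commute with limits over $\Delta^{\operatorname{op}}$, the desired equivalence then reduces to the analogous statement at each simplicial level, that is, to cdh descent for locally compact Hausdorff topological spaces. This topological descent is precisely the proper descent result established in \cite{ClausenOrsnesJansen}; alternatively, it can be extracted directly from the closed/open recollement (\Cref{sheaves on closed substack} applied to $Z_n \hookrightarrow V_n$ and to $X'_n \hookrightarrow X_n$) combined with proper base change: since $f_n$ restricts to an isomorphism on the open complement of $Z_n$ in $V_n$, the open parts of the two recollements are canonically identified, and proper base change identifies the two gluing functors $i_n^* j_{n*}$ and $i'^*_n j'_{n*}$, matching up the recollement descriptions of $\operatorname{Shv}(V_n)$ and $\operatorname{Shv}(X_n)\times_{\operatorname{Shv}(X'_n)}\operatorname{Shv}(Z_n)$.

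The main obstacle is essentially bookkeeping: one needs to verify that the chosen atlas $V\to Y$ produces compatible Čech nerves simultaneously for all four corners of the square and that the cdh conditions genuinely descend to each simplicial level. Both points follow immediately from the stability under pullback of the classes of morphisms involved, after which the stack-level claim is essentially formal given the topological input.
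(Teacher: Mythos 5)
Your argument is correct, but it takes a genuinely different route from the one in the paper. You reduce to topological spaces at the outset: pull the whole square back along the \v{C}ech nerve of an étale atlas of $Y$, commute the fibre product of $\infty$-categories past the limit over $\Delta$, and invoke topological proper cdh descent from Clausen--{\O}rsnes Jansen (or rerun the recollement-plus-proper-base-change argument levelwise, as in your sketch). The paper instead argues directly at the level of stacks: it uses the stack-level recollement already established in \Cref{sheaves on open substack generalised} and \Cref{sheaves on closed substack} to see that the pullbacks to $Z$ and to the open complement $U=Y\smallsetminus Z$ are jointly conservative, and then uses the stack-level proper base change (\Cref{proper base change}) to check the conclusion after restricting the whole square to $Z$ (where the horizontal maps become equivalences) and to $U$ (where the vertical maps become equivalences). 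The atlas reduction you perform is essentially the one already absorbed into the proofs of \Cref{proper base change} and \Cref{sheaves on closed substack}, so the paper's proof comes out shorter by reusing that infrastructure; your version has the mild advantage of needing only the topological statement as a black box, at the cost of the bookkeeping you flag --- in particular one must check that all four corners pull back to spaces (which holds because proper maps and closed embeddings of étale stacks are by definition representable against atlases), that the four \v{C}ech nerves assemble compatibly, and that the global comparison functor is the limit over $\Delta$ of the levelwise comparison functors; all of this does follow from stability of the three classes of maps under base change, as you say.
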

\begin{proof}
Let $j\colon U:=Y\smallsetminus Z\hookrightarrow X$ denote the open embedding of the complement of $Z$. By \Cref{sheaves on open substack generalised} and \Cref{sheaves on closed substack}, we have equivalences
\begin{align*}
i_*\colon \operatorname{Shv}(Z)\xrightarrow{\sim} \operatorname{ker}\big(\operatorname{Shv}(X)\rightarrow \operatorname{Shv}(U)\big)\quad\text{and}\quad j_!\colon \operatorname{Shv}(U)\xrightarrow{\sim} \operatorname{Shv}(X)_{/y(U)}.
\end{align*}
Hence by \cite[Lemma A.5.11]{LurieHA}, equivalence of sheaves on $X$ can be detected by pulling back to $Z$ and $U$. Moreover, these pullback functors preserve finite limits as they correspond to geometric morphisms of $\infty$-topoi. Therefore, it follows from \cite[Corollary 5.2.2.37]{LurieHTT} and proper base-change that we can test the conclusion of the statement after pulling the diagram back to $Z$ and $U$. But here it is obvious since on pullback to $Z$, the horizontal maps become equivalences and on pullback to $U$, the vertical maps become equivalences.
\end{proof}

By induction, we can generalise this to descent for closed covers.

\begin{corollary}\label{descent for closed covers}
Let $X$ be an étale $\infty$-stack and let $P$ be a finite set of closed substacks of $X$ viewed as a poset under inclusion. Suppose that for every subset $P'\subseteq P$, there is a subset $P''\subseteq P$ such that $\bigcap_{S\in P'}S = \bigcup_{T\in P''}T$; in other words, any intersection of elements in $P$ admits a cover by elements in $P$ (in particular, taking $P'=\emptyset$, we have $X=\bigcup_{S\in P}S$). Then
\begin{align*}
\operatorname{Shv}(X)\xrightarrow{\ \sim\ } \varprojlim_{S\in P^{op}} \operatorname{Shv}(S)
\end{align*}
via pullback.
\end{corollary}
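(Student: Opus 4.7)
My plan is to induct on the cardinality $|P|$. The base case $|P| \le 1$ is immediate: the hypothesis applied to the empty index set forces $X = \bigcup_{S \in P} S$, so $|P| = 0$ gives $X = \emptyset$ while $|P| = 1$ gives $P = \{X\}$. For the inductive step with $|P| \ge 2$, I would pick a maximal element $T \in P$, set $Q := P \setminus \{T\}$, and form $Z := \bigcup_{S \in Q} S$, which is a closed substack of $X$ by finiteness of $Q$. The hypothesis applied to the empty subset also gives $X = T \cup Z$, so the evident pullback square with corners $T \cap Z$, $T$, $Z$, $X$ satisfies the hypotheses of \Cref{cdh descent}: the closed embedding $T \hookrightarrow X$ is proper, $Z \hookrightarrow X$ is closed, and $T$ restricts to an isomorphism over $X \setminus Z$. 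This yields $\operatorname{Shv}(X) \simeq \operatorname{Shv}(T) \mathop{\times}_{\operatorname{Shv}(T \cap Z)} \operatorname{Shv}(Z)$.

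The key combinatorial input is the identity $T \cap Z = \bigcup_{S \in Q_T} S$, where $Q_T := \{S \in Q : S \subseteq T\}$. The inclusion $\supseteq$ is clear. For $\subseteq$, I would write $T \cap Z = \bigcup_{S \in Q}(T \cap S)$ and invoke the intersection hypothesis to express each $T \cap S$ as a union of elements $T' \in P'' \subseteq P$. Each such $T'$ satisfies $T' \subseteq T$, and the maximality of $T$ rules out $T' = T$ (otherwise $T \subseteq S \neq T$ would contradict maximality), so $T' \in Q_T$. The same maximality argument shows that $Q$ (as a poset of closed substacks of $Z$) and $Q_T$ (as a poset of closed substacks of $T \cap Z$) each inherit the intersection hypothesis from $P$, and both have size strictly less than $|P|$. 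Hence the inductive hypothesis yields $\operatorname{Shv}(Z) \simeq \varprojlim_{S \in Q^{\operatorname{op}}} \operatorname{Shv}(S)$ and $\operatorname{Shv}(T \cap Z) \simeq \varprojlim_{S \in Q_T^{\operatorname{op}}} \operatorname{Shv}(S)$.

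To conclude, I would recognize the resulting pullback as $\varprojlim_{S \in P^{\operatorname{op}}} \operatorname{Shv}(S)$ via a general categorical decomposition: the poset $P^{\operatorname{op}}$ is the pushout $Q^{\operatorname{op}} \cup_{Q_T^{\operatorname{op}}} (P_{\le T})^{\operatorname{op}}$, where $P_{\le T} := \{S \in P : S \subseteq T\}$, and since $T$ is initial in $(P_{\le T})^{\operatorname{op}}$ we have $\varprojlim_{(P_{\le T})^{\operatorname{op}}} \operatorname{Shv} \simeq \operatorname{Shv}(T)$. Taking limits converts this pushout of indexing categories into precisely the pullback produced by cdh descent above. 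The main obstacle I anticipate is the combinatorial identity $T \cap Z = \bigcup_{S \in Q_T} S$: it relies essentially on both the intersection hypothesis and the maximality of $T$, and once in hand, \Cref{cdh descent} supplies the geometric content at each inductive step.
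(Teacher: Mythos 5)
Your proposal is correct and follows essentially the same route as the paper's proof: induct on $|P|$, strip off a maximal element $S_0$, apply \Cref{cdh descent} to the two-piece closed cover $X = S_0 \cup \bigl(\bigcup_{S \neq S_0} S\bigr)$, and reassemble the limit via the pushout decomposition $P = (P\smallsetminus\{S_0\}) \cup_{P_{\leq S_0}\smallsetminus\{S_0\}} P_{\leq S_0}$ of the indexing poset. You in fact spell out the combinatorial verifications (the identity $S_0 \cap X' = \bigcup_{S \in P_{\leq S_0}\smallsetminus\{S_0\}} S$ and the inheritance of the intersection hypothesis by the subposets) in more detail than the paper does.
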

\begin{proof}
If $P$ has less than $3$ elements, this is a special case of \Cref{cdh descent} where the proper map $f$ is also a closed inclusion. The general case follows by induction: for general $P$, assume that the claim holds for posets with strictly fewer elements, let $S_0\in P$ be a maximal element and set $X':= \bigcup_{S\in P\smallsetminus\{S_0\}} S$.

Recall that given an $\infty$-category $\mathcal{D}$, a subcategory $\mathcal{C}\subseteq \mathcal{D}$ is \textit{left closed} if the following condition holds: $d\rightarrow c$ and $c\in\mathcal{C}$ implies $d\in \mathcal{C}$. The categorical analogue of descent for closed covers (\Cref{descent for closed covers}) is descent for left closed full subcategories (\cite[Corollary 2.32]{ClausenOrsnesJansen}). By descent for left closed covers, $P$ identifies with the pushout in $\operatorname{Cat}_\infty$ of the span $P\smallsetminus \{S_0\} \leftarrow P_{\leq S_0}\smallsetminus\{S_0\} \rightarrow P_{\leq S_0}$. The claim then follows by the induction hypothesis and decomposition of limits over $P$ (\cite[Proposition 2.1]{ClausenOrsnesJansen}) as can be seen by analysing the following commutative diagram:
\begin{center}
\begin{tikzpicture}
\matrix (m) [matrix of math nodes,row sep=2em,column sep=2em]
  {
\operatorname{Shv}(X) & \operatorname{Shv}(X')\displaystyle\mathop{\times}_{\operatorname{Shv}(X'\cap S_0)} \operatorname{Shv}(S_0) \\
\displaystyle \varprojlim_{S\in P}\operatorname{Shv}(S) &\displaystyle \varprojlim_{\ \ S\in P\smallsetminus\{S_0\}}\!\!\!\!\!\!\operatorname{Shv}(S) \mathop{\times}_{\genfrac{}{}{0pt}{2}{\scriptstyle\varprojlim\ \ \operatorname{Shv}(S)}{\!\!\!\!\!\!\!\!\!\scriptscriptstyle S\in P_{\leq S_0}\smallsetminus \{S_0\}}} \displaystyle\varprojlim_{\ \ \,S\in P_{\leq S_0}}\!\!\! \operatorname{Shv}(S)\\
  };
  \path[-stealth]
(m-1-1) edge (m-2-1) edge (m-1-2.176)
(m-1-2) edge (m-2-2) 
(m-2-1.10) edge (m-2-2.174)
;
\end{tikzpicture}
\end{center}
This completes the proof.
\end{proof}

\section{Stratified homotopy theory}\label{stratified homotopy theory}

We now arrive at the main content of this note, namely setting up a stratified homotopy theory for stacks. As already mentioned, this is motivated by a concrete interest in the moduli stack of stable nodal curves, also known as the Deligne--Mumford--Knudsen compactification. Although these are just $1$-stacks, we strive to set up as simple a theory as possible by working in the realm of $\infty$-stacks and taking advantage of the tools made available to us by higher topos theory.

\subsection{Stratified \texorpdfstring{$\infty$}{infinity}-stacks and constructible sheaves}

Let $P$ be a poset. We consider $P$ as a topological space via its Alexandroff topology (the open sets are the upwards closed sets), and we consider it as an $\infty$-stack $P\in \operatorname{Shv}(\mathfrak{T})$ via the functor $S\mapsto \operatorname{Map}_{\operatorname{Top}}(S,P)$. Note that this is not generally representable by an object of $\mathfrak{T}$, as $P$ is Hausdorff if and only if it is discrete.

\begin{definition}
Let $X$ be an $\infty$-stack and let $P$ be a poset.
\begin{enumerate}
\item By a \textit{stratification} of $X$ (over $P$) we mean a morphism $s\colon X\rightarrow P$ in $\operatorname{Shv}(\mathfrak{T})$.
\item A \textit{stratified $\infty$-stack} is an $\infty$-stack $X$ equipped with a fixed stratification $s\colon X\rightarrow P$. We write $(X,s)$ or just $(X,P)$.
\item The \textit{strata} of a stratified $\infty$-stack $(X,P)$ are the substacks $X_p=X\times_P\{p\}$, $p\in P$.
\item A \textit{morphism} of stratified $\infty$-stacks $(X,s\colon X\rightarrow P)\rightarrow(Y,t\colon Y\rightarrow Q)$ consists of a morphism $f\colon X\rightarrow Y$ and a map of posets $\sigma\colon P\rightarrow Q$ such that $t\circ f=\sigma\circ s$.\qedhere
\end{enumerate}
\end{definition}

\begin{remark}
Unravelling the data of a morphism $X\rightarrow P$, we see that it is a collection $\{X_p\}_{p\in P}$ of pairwise disjoint locally closed substacks of $X$ such that for all $p\in P$, the union $X_{\geq p}=\bigcup_{q\geq p}X_q\subset X$ is an open substack and $\bigcup_{p\in P}X_p=X$.
\end{remark}

\begin{remark}
Suppose $(X,P)$ is a stratified $\infty$-stack and that $f\colon U\rightarrow X$ is an étale atlas. Then we can stratify $U$ over $P$ via the composite $U\rightarrow X\rightarrow P$. Likewise, we can stratify each level of the \v{C}ech nerve of $f$ over $P$; we denote the \textit{stratified \v{C}ech nerve} by $(\check{C}(f),P)$.
\end{remark}

We now define what it means for a sheaf on a (stratified) $\infty$-stack to be constant, locally constant and constructible (see also \cite[\S A.1 and \S A.5]{LurieHTT}).

\begin{definition} Let $\mathcal{C}$ be a compactly generated $\infty$-category, $X$ an étale $\infty$-stack and $\mathcal{F}$ a $\mathcal{C}$-valued sheaf on $X$.
\begin{enumerate}
\item $\mathcal{F}$ is \textit{constant} if it is in the essential image of the pullback functor $\mathcal{C}\simeq\operatorname{Shv}(*;\mathcal{C})\rightarrow \operatorname{Shv}(X; \mathcal{C})$ induced by the projection $X\rightarrow *$.

\item $\mathcal{F}$ is \textit{locally constant} if there exists a collection of morphisms $\iota_i\colon U_i\rightarrow X$, $i\in I$, in $\mathcal{LH}(X)$ such that the morphism $\iota\colon \coprod_{i\in I} U_i\rightarrow X$ is an effective epimorphism and the restriction $\iota_i^*(\mathcal{F})$ is constant in $\operatorname{Shv}(U_i;\mathcal{C})$ for each $i$.

\item If $X\rightarrow P$ is a stratification, then $\mathcal{F}$ is $P$-\textit{constructible} (or just \textit{constructible}) if its restriction to each stratum $X_p$ is locally constant.\qedhere
\end{enumerate}
\end{definition}

\begin{definition}
Let $(X,P)$ be a stratified étale $\infty$-stack and $\mathcal{C}$ a compactly generated $\infty$-category. We write $\operatorname{Shv}^{\operatorname{cbl}}_P(X;\mathcal{C})\subseteq \operatorname{Shv}(X;\mathcal{C})$ for the full subcategory spanned by the $P$-constructible sheaves; we omit the subscript $P$, if the poset is implicit. If $P=\ast$, then we write $\operatorname{Shv}^{\operatorname{LC}}(X;\mathcal{C})=\operatorname{Shv}^{\operatorname{cbl}}_{\ast}(X;\mathcal{C})$ for the full subcategory of \textit{locally constant} sheaves. If $\mathcal{C}=\mathcal{S}$, we will often omit the coefficients. 
\end{definition}

\begin{remark}
One readily verifies that our definition of locally constant sheaves recovers the one given by Lurie in \cite[Definition A.1.12]{LurieHA}. In the case of a topological space $X$ viewed as an $\infty$-stack, we can also see more explicitly that we recover the usual definition from the fact that any local homeomorphism can be refined to an open cover. In particular, if $(X,P)$ is a stratified topological space viewed as a stratified $\infty$-stack, then the above definition of constructible sheaves also recovers the usual one.
\end{remark}

We have the following simple but useful observation.

\begin{lemma}\label{locally constant and constructible sheaves via the Cech nerve}
Let $X$ be an $\infty$-stack with an étale atlas $f\colon U\rightarrow X$ and let $\check{C}(f)$ denote the \v{C}ech nerve of $f$. Then
\begin{align*}
\operatorname{Shv}^{\operatorname{LC}}(X)\xrightarrow{\ \sim\ } \varprojlim\operatorname{Shv}^{\operatorname{LC}}(\check{C}(f)).
\end{align*}
More generally, if $(X,P)$ is a stratified étale $\infty$-stack, then 
\begin{align*}
\operatorname{Shv}^{\operatorname{cbl}}_P(X)\xrightarrow{\ \sim\ } \varprojlim \operatorname{Shv}^{\operatorname{cbl}}_P(\check{C}(f)),
\end{align*}
where on the right hand side we consider the stratified \v{C}ech nerve $(\check{C}(f),P)$.
\end{lemma}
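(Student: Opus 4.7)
My plan is to bootstrap both statements from the basic descent result $\operatorname{Shv}(X)\simeq\varprojlim\operatorname{Shv}(\check{C}(f))$ recorded in \Cref{sheaves via cech nerve}, by showing that under this equivalence the locally constant (resp.~constructible) sheaves on $X$ correspond precisely to those compatible systems whose restriction to each level of the \v{C}ech nerve is locally constant (resp.~constructible).

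First I would treat the locally constant case. The pullback along any morphism of étale $\infty$-stacks preserves local constancy (if $\iota_i\colon V_i\to X$ witness local constancy of $\mathcal{F}$, then their base-change along $g\colon Y\to X$ witnesses local constancy of $g^*\mathcal{F}$, since $\mathcal{LH}$ is stable under pullback and effective epimorphisms are preserved by the pullback functor between slice $\infty$-topoi). This gives a fully faithful restriction $\operatorname{Shv}^{\operatorname{LC}}(X)\hookrightarrow \varprojlim\operatorname{Shv}^{\operatorname{LC}}(\check{C}(f))$ (full faithfulness being automatic, since the inclusions into the full sheaf categories are fully faithful and the functor on total sheaves is an equivalence). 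For essential surjectivity, let $\mathcal{F}\in\operatorname{Shv}(X)$ be a sheaf whose pullback $f^*\mathcal{F}\in\operatorname{Shv}(U)$ is locally constant. Choose a collection $\{\iota_i\colon V_i\to U\}_{i\in I}$ in $\mathcal{LH}(U)$ with $\coprod_i V_i\to U$ an effective epimorphism and each $\iota_i^*f^*\mathcal{F}$ constant. The composites $f\circ\iota_i\colon V_i\to X$ lie in $\mathcal{LH}(X)$ (local homeomorphisms compose), and $\coprod_i V_i\to X$ is an effective epimorphism because both $\coprod_iV_i\to U$ and $f$ are. Hence $\{f\circ\iota_i\}$ witnesses local constancy of $\mathcal{F}$ on $X$.

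For the constructible case I would reduce to the locally constant case one stratum at a time. Given $p\in P$, set $U_p:=U\times_X X_p$; the projection $U_p\to X_p$ is an étale atlas of $X_p$, and its \v{C}ech nerve is the pullback of $\check{C}(f)$ along $X_p\hookrightarrow X$ (which is a levelwise stratum inclusion in the stratified \v{C}ech nerve). Applying the locally constant case to $X_p$ with this atlas, a sheaf $\mathcal{G}$ on $X_p$ is locally constant if and only if its restriction to each level of $\check{C}(f)\times_X X_p$ is locally constant. Taking $\mathcal{G}=\mathcal{F}|_{X_p}$ and varying $p\in P$ yields the desired characterization: $\mathcal{F}$ on $X$ is $P$-constructible if and only if each $f_n^*\mathcal{F}$ on $U_n$ is $P$-constructible. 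Combining with the \v{C}ech nerve descent equivalence gives the displayed equivalence.

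The only genuine content, and the step I would be most careful with, is the essential surjectivity argument in the locally constant case — specifically the observation that once $f^*\mathcal{F}$ is locally constant on the atlas $U$, local constancy on $X$ follows automatically because $f\in\mathcal{LH}(X)$ itself is an effective epimorphism in $\operatorname{Shv}_{\acute{e}t}(X)$ (by \Cref{yoneda maps atlas to effective epi}). Once this is in hand, the rest is formal manipulation of limits and pullbacks, and the constructible case is an immediate consequence stratum-by-stratum.
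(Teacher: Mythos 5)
Your proposal is correct and follows essentially the same route as the paper: reduce via the descent equivalence $\operatorname{Shv}(X)\simeq\varprojlim\operatorname{Shv}(\check{C}(f))$ to showing that local constancy of $f^*\mathcal{F}$ on $U$ implies local constancy of $\mathcal{F}$ on $X$, and then handle constructibility stratum-by-stratum using the induced atlases $U_p\to X_p$. The paper compresses your essential-surjectivity step into the remark that $f$ admits local sections; your explicit version --- composing the witnessing local homeomorphisms $V_i\to U$ with $f$ and using that effective epimorphisms are closed under composition --- is the same observation spelled out.
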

\begin{proof}
To prove the claim for locally constant sheaves, we need to show that a sheaf $\mathcal{F}$ on $X$ is locally constant if and only if it pulls back to a locally constant sheaf on each level of the \v{C}ech nerve. Thus all we need to verify is that if $f^*\mathcal{F}$ is locally constant on $U$, then $\mathcal{F}$ is itself locally constant. But this is immediate, since $f$ admits local sections. The general case of constructible sheaves follows by applying the above to the induced atlas of $p$-strata, $f_p\colon U_p\rightarrow X_p$, for each $p\in P$.
\end{proof}

Let's single out the following immediate application of descent for left closed covers (\Cref{descent for closed covers}) in the constructible setting.

\begin{corollary}\label{descent for closed covers, constructible setting}
Let $(X,P)$ be a stratified étale $\infty$-stack and consider the closed cover $\overline{X}_p$, $p\in P$, by closures of strata. Then
\begin{align*}
\operatorname{Shv}^{\operatorname{cbl}}_P(X)\xrightarrow{\ \sim\ } \varprojlim_{p\in P^{op}} \operatorname{Shv}^{\operatorname{cbl}}_{P_{\leq p}}(\overline{X}_p)
\end{align*}
via pullback.
\end{corollary}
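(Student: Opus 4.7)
The plan is to deduce this from \Cref{descent for closed covers} applied to the family $\{\overline{X}_p\}_{p\in P}$ of closed substacks of $X$, and then cut down to constructible sheaves on both sides. Here we interpret $\overline{X}_p$ as the preimage $s^{-1}(P_{\leq p})$ under the stratification $s\colon X\rightarrow P$, which is a closed substack since $P_{\leq p}$ is closed in the Alexandroff topology on $P$; moreover it carries a canonical stratification over $P_{\leq p}$, with strata $(\overline{X}_p)_q=X_q$ for $q\leq p$.

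First, I would verify that the family $\{\overline{X}_p\}_{p\in P}$ satisfies the hypothesis of \Cref{descent for closed covers}. For any subset $P'\subseteq P$, the intersection identifies as
\begin{align*}
\bigcap_{p\in P'}\overline{X}_p=s^{-1}\Bigl(\bigcap_{p\in P'}P_{\leq p}\Bigr),
\end{align*}
and the intersection of downward-closed subsets on the right is itself downward closed, hence equals $\bigcup_{q\in Q}P_{\leq q}$ for $Q=\bigcap_{p\in P'}P_{\leq p}\subseteq P$. Therefore the intersection is the union $\bigcup_{q\in Q}\overline{X}_q$, as required. Taking $P'=\emptyset$ in particular yields $X=\bigcup_{p\in P}\overline{X}_p$. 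Applying \Cref{descent for closed covers} then produces an equivalence
\begin{align*}
\operatorname{Shv}(X)\xrightarrow{\ \sim\ }\varprojlim_{p\in P^{op}}\operatorname{Shv}(\overline{X}_p)
\end{align*}
via the pullback functors.

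Next, I would argue that this equivalence restricts to the full subcategories of constructible sheaves. The key observation is that a sheaf $\mathcal{F}\in\operatorname{Shv}(X)$ is $P$-constructible if and only if, for every $p\in P$, the restriction $\mathcal{F}|_{\overline{X}_p}$ is $P_{\leq p}$-constructible. Indeed, both conditions unravel to the requirement that $\mathcal{F}|_{X_q}$ is locally constant for every $q\in P$: the stratum $X_q$ appears on the $X$-side directly, and on the $\overline{X}_p$-side as the stratum $(\overline{X}_p)_q$ whenever $q\leq p$ (and every $q$ arises this way by choosing $p=q$). Since pullback along a morphism of stratified stacks preserves the property of being (locally) constant on each stratum, the transition functors in the limit preserve constructibility.

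The hardest part of this plan is really just bookkeeping: matching the stratifications on the closed substacks with the ambient stratification and checking that the equivalence of \Cref{descent for closed covers} carries the constructible subcategory on the left precisely onto the full subcategory of the limit consisting of compatible families of constructible sheaves. Once the equivalence between ``$P$-constructible on $X$'' and ``$P_{\leq p}$-constructible on each $\overline{X}_p$'' is established, the desired conclusion is the restriction of the equivalence produced by \Cref{descent for closed covers} to these full subcategories, yielding
\begin{align*}
\operatorname{Shv}^{\operatorname{cbl}}_P(X)\xrightarrow{\ \sim\ }\varprojlim_{p\in P^{op}}\operatorname{Shv}^{\operatorname{cbl}}_{P_{\leq p}}(\overline{X}_p).
\end{align*}
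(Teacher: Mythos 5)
Your proof is correct and follows essentially the same route as the paper: apply \Cref{descent for closed covers} to the closed cover by the $\overline{X}_p$ and then observe that the resulting equivalence restricts to the constructible subcategories because each stratum is contained in its own closure. You are in fact somewhat more careful than the paper in verifying the intersection hypothesis of \Cref{descent for closed covers}, under the reading $\overline{X}_p = s^{-1}(P_{\leq p})$, which is the interpretation that makes that hypothesis automatic.
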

\begin{proof}
By \Cref{descent for closed covers}, we have an equivalence
\begin{align*}
\operatorname{Shv}(X)\xrightarrow{\ \sim\ } \varprojlim_{p\in P^{op}} \operatorname{Shv}(\overline{X}_p).
\end{align*}
To see that it restricts to an equivalence of categories of constructible sheaves, it suffices to see that if a sheaf $\mathcal{F}$ on $X$ pulls back to a constructible sheaf on each $\overline{X}_p$, then $\mathcal{F}$ is itself constructible. But this is obvious as each stratum $X_p$ is contained in its own closure.
\end{proof}

Likewise, one can apply ``cdh descent'' (\Cref{cdh descent}) to the stratified setting in the following situation; we leave the proof to the reader.

\begin{corollary}
Let $f\colon (X,P)\rightarrow (Y,P)$ be a morphism of stratified étale $\infty$-stacks whose underlying map is proper. Assume moreover that $P$ has a maximal element $m$ and that $f$ restricts to an isomorphism over the open stratum $Y_m$. Then 
\begin{align*}
\operatorname{Shv}_P^{\operatorname{cbl}}(Y)\xrightarrow{\sim} \operatorname{Shv}_P^{\operatorname{cbl}}(X) \mathop{\times}_{\operatorname{Shv}_{P\smallsetminus m}^{\operatorname{cbl}}(X\smallsetminus X_m)}\operatorname{Shv}_{P\smallsetminus m}^{\operatorname{cbl}}(Y\smallsetminus Y_m)
\end{align*}
via pullback.
\end{corollary}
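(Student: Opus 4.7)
The plan is to apply the cdh descent result (\Cref{cdh descent}) directly to the given pullback square and then verify that the resulting equivalence restricts to constructible sheaves.

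First I would observe that the hypotheses of \Cref{cdh descent} are exactly what we are given. Since $m$ is maximal, the stratum $Y_m \hookrightarrow Y$ is open and its complement $Z := Y \smallsetminus Y_m$ is a closed substack; its pullback along $f$ is $X \smallsetminus X_m$, and the pullback of $f$ to the open complement $Y \smallsetminus Z = Y_m$ is an isomorphism by assumption. The map $f$ is proper by hypothesis. Applying \Cref{cdh descent} gives an equivalence
\begin{align*}
\operatorname{Shv}(Y) \xrightarrow{\ \sim\ } \operatorname{Shv}(X) \mathop{\times}_{\operatorname{Shv}(X \smallsetminus X_m)} \operatorname{Shv}(Y \smallsetminus Y_m).
\end{align*}

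Next I would show that this equivalence restricts to the constructible subcategories on both sides. One direction is formal: a pullback of a constructible sheaf along a stratified morphism is constructible (as the restriction to each stratum is the pullback of a locally constant sheaf along a morphism of étale stacks, hence locally constant by \Cref{locally constant and constructible sheaves via the Cech nerve} or direct inspection). So it suffices, as in the proof of \Cref{descent for closed covers, constructible setting}, to check the converse: if $\mathcal{F} \in \operatorname{Shv}(Y)$ is such that $f^*\mathcal{F}$ is $P$-constructible on $X$ and $\mathcal{F}|_{Y\smallsetminus Y_m}$ is $(P\smallsetminus\{m\})$-constructible, then $\mathcal{F}$ is $P$-constructible, i.e.~its restriction $\mathcal{F}|_{Y_q}$ is locally constant for every $q \in P$.

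This check splits into two cases. For $q \neq m$, the stratum $Y_q$ lies inside the closed substack $Y \smallsetminus Y_m$, so $\mathcal{F}|_{Y_q}$ is locally constant because $\mathcal{F}|_{Y\smallsetminus Y_m}$ is constructible with respect to the induced stratification. For $q = m$, we use the assumption that $f$ restricts to an isomorphism over $Y_m$: the induced map $X_m \to Y_m$ is an equivalence, and under this equivalence $\mathcal{F}|_{Y_m}$ is identified with $(f^*\mathcal{F})|_{X_m}$, which is locally constant since $f^*\mathcal{F}$ is $P$-constructible on $X$. This establishes the claimed equivalence of constructible subcategories. No step here poses any genuine obstacle; the main point is simply to recognise that all the work has already been done in \Cref{cdh descent}, with the stratum-by-stratum verification being a direct translation of the argument used for \Cref{descent for closed covers, constructible setting}.
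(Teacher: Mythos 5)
Your proposal is correct and is exactly the argument the paper intends: the paper explicitly leaves this proof to the reader, indicating only that one should apply \Cref{cdh descent} in the stratified setting, and your stratum-by-stratum verification that the equivalence restricts to constructible subcategories (using $Y_q\subseteq Y\smallsetminus Y_m$ for $q\neq m$ and the isomorphism $X_m\xrightarrow{\sim} Y_m$ for $q=m$) mirrors the pattern of the proof of \Cref{descent for closed covers, constructible setting}. No gaps.
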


The following identification implies that generally it suffices to consider the category of $\mathcal{S}$-valued sheaves.

\begin{lemma}
Let $\mathcal{C}$ be a compactly generated $\infty$-category and $\mathcal{C}_0$ the subcategory of compact objects. For a stratified étale $\infty$-stack $(X,P)$, we have an equivalence
\begin{align*}
\operatorname{Shv}^{\operatorname{cbl}}_P(X;\mathcal{C})\xrightarrow{\ \sim \ } \operatorname{Fun}^{\operatorname{lex}}(\mathcal{C}_0^{\operatorname{op}},\operatorname{Shv}^{\operatorname{cbl}}_P(X;\mathcal{S}))
\end{align*}
\end{lemma}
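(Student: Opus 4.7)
The plan is to invoke \Cref{sheaves with general coefficients from space valued sheaves} for the étale site $\mathcal{LH}(X)$, yielding the unrestricted equivalence
\[
\Phi\colon \operatorname{Shv}(X;\mathcal{C}) \xrightarrow{\ \sim \ } \operatorname{Fun}^{\operatorname{lex}}\!\bigl(\mathcal{C}_0^{\operatorname{op}},\operatorname{Shv}(X;\mathcal{S})\bigr),
\]
and then to show that $\Phi$ restricts to the asserted equivalence of constructible subcategories. The essential input is that $\Phi$ is natural with respect to pullback along any morphism of étale $\infty$-stacks $f\colon Y\to X$: the pullback $f^*$ on the left-hand side corresponds to postcomposition with $f^*\colon \operatorname{Shv}(X;\mathcal{S})\to\operatorname{Shv}(Y;\mathcal{S})$ on the right-hand side. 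This is formal from the identification $\operatorname{Shv}(-;\mathcal{C})\simeq \operatorname{Shv}(-;\mathcal{S})\otimes\mathcal{C}$ and the functoriality of tensor products of presentable $\infty$-categories.

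Given naturality, applied to the terminal projection $p\colon X\to\ast$ (and using $\operatorname{Shv}(\ast;\mathcal{C})\simeq \mathcal{C}\simeq\operatorname{Fun}^{\operatorname{lex}}(\mathcal{C}_0^{\operatorname{op}},\mathcal{S})$), $\Phi$ identifies constant $\mathcal{C}$-valued sheaves with those lex functors $F$ factoring through the inclusion $\mathcal{S}\hookrightarrow \operatorname{Shv}(X;\mathcal{S})$ of constant sheaves. Applying the same argument to each morphism $\iota\colon V\to X$ in $\mathcal{LH}(X)$ shows that $\mathcal{F}|_V$ is constant iff every $F(c)|_V$ is simultaneously constant, where $F=\Phi(\mathcal{F})$. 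Hence $\mathcal{F}$ is locally constant iff there exists an effective epimorphism $\coprod U_i\to X$ in $\mathcal{LH}(X)$ along which every $F(c)$ simultaneously trivialises, i.e.~iff $F$ factors through $\operatorname{Shv}^{\operatorname{LC}}(X;\mathcal{S})$. Applying this argument on each stratum using naturality along the inclusions $X_p\hookrightarrow X$ then yields that $\mathcal{F}$ is $P$-constructible iff $\Phi(\mathcal{F})$ factors through $\operatorname{Shv}^{\operatorname{cbl}}_P(X;\mathcal{S})$, as required.

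The main obstacle I anticipate is the $(\Leftarrow)$ direction in the identification of locally constant objects: given a lex functor $F\colon \mathcal{C}_0^{\operatorname{op}}\to \operatorname{Shv}^{\operatorname{LC}}(X;\mathcal{S})$, one must produce a single effective epimorphism in $\mathcal{LH}(X)$ that simultaneously trivialises every $F(c)$, whereas a priori distinct compact objects $c\in\mathcal{C}_0$ come with distinct trivialising covers and $\mathcal{C}_0$ need only be essentially small. I would address this by first reducing via \Cref{locally constant and constructible sheaves via the Cech nerve} to the case where $X$ is a topological space, where local constancy is more transparent, and then exploiting that $F$ is left-exact to assemble a common trivialising cover from the individual ones on a suitable generating subcategory of $\mathcal{C}_0$; equivalently, one can argue directly that the symmetric monoidal structure on presentable $\infty$-categories restricts to give $\operatorname{Shv}^{\operatorname{LC}}(X;\mathcal{S})\otimes\mathcal{C}\simeq \operatorname{Fun}^{\operatorname{lex}}(\mathcal{C}_0^{\operatorname{op}},\operatorname{Shv}^{\operatorname{LC}}(X;\mathcal{S}))$, which delivers the desired identification in one stroke.
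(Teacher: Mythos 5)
Your skeleton is the natural one and, as far as one can tell, matches the cited argument: the paper's own ``proof'' is only a pointer to \cite[Lemma B.3]{OrsnesJansen}, and that proof likewise restricts the unrestricted equivalence of \Cref{sheaves with general coefficients from space valued sheaves} for the site $\mathcal{LH}(X)$ and checks that constructibility is detected componentwise. You have also correctly located the crux. The problem is that neither of your two proposed ways of closing it actually works, and the implication you need is not formal. Left-exactness of $F$ cannot manufacture a single trivialising cover out of the individual ones when $\mathcal{C}_0$ contains infinitely many ``independent'' compact objects. Concretely, take $\mathcal{C}=\prod_{n\in\N}\mathcal{S}$, so that $\operatorname{Shv}(X;\mathcal{C})=\prod_n\operatorname{Shv}(X)$ and the objects $e_n(S^0)$ are compact; take $X=\{0\}\cup\{1/n:n\geq 1\}\in\mathfrak{T}$ with the trivial stratification; and let $\mathcal{F}_n$ be the sheaf equal to the constant sheaf on $\ast$ over the clopen set $\{1/k:k\leq n\}$ and to the constant sheaf on $S^1$ over its clopen complement. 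Each $\mathcal{F}_n$ is locally constant, but any open set $U\ni 0$ contains $1/n$ for all large $n$, and for such $n$ the restriction $\mathcal{F}_n|_U$ has non-equivalent stalks at $1/n$ and at $0$, hence is not constant; so the components admit no common trivialising cover and $(\mathcal{F}_n)_n$ is not locally constant in the sense of the paper's definition. Thus ``each $F(c)$ locally constant'' is in general strictly weaker than ``$\mathcal{F}$ locally constant'', and the step genuinely requires geometric input on the strata --- a basis of opens $U$ of trivial shape, on which locally constant forces constant and on which $\mathcal{S}\rightarrow\operatorname{Shv}(U)$ is fully faithful, so that the constant values $F(c)|_U\simeq\underline{A_c}$ reassemble into a left-exact functor $c\mapsto A_c$, i.e.\ an object of $\mathcal{C}$. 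Your reduction to topological spaces via (the $\mathcal{C}$-valued analogue of) \Cref{locally constant and constructible sheaves via the Cech nerve} does not supply this niceness.

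The alternative ``one stroke'' argument is also not available as stated: the identification $\operatorname{Shv}^{\operatorname{LC}}(X)\otimes\mathcal{C}\simeq\operatorname{Fun}^{\operatorname{lex}}(\mathcal{C}_0^{\operatorname{op}},\operatorname{Shv}^{\operatorname{LC}}(X))$ presupposes that $\operatorname{Shv}^{\operatorname{LC}}(X)$ is presentable, which is part of the (unassumed) condition of admitting an exit path $\infty$-category, and --- more seriously --- the assertion that tensoring the inclusion $\operatorname{Shv}^{\operatorname{LC}}(X)\subseteq\operatorname{Shv}(X)$ with $\mathcal{C}$ yields precisely the full subcategory $\operatorname{Shv}^{\operatorname{LC}}(X;\mathcal{C})$ of the paper's definition is exactly the statement to be proven, so this route is circular. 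In short: the naturality and ``constant objects'' parts of your argument are fine, but the essential-surjectivity direction needs either the shape-theoretic input above (restricting the class of stacks, as in the setting of the cited reference) or a componentwise reformulation of the definition of locally constant $\mathcal{C}$-valued sheaves; as written, the proposal does not close the gap it correctly identifies.
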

\begin{proof}
See the proof of \cite[Lemma B.3]{OrsnesJansen}.
\end{proof}

\subsection{Exit path \texorpdfstring{$\infty$}{infinity}-categories}

We now introduce exit path $\infty$-categories of $\infty$-stacks. As mentioned in the introduction, we opt for a non-constructive approach: essentially, the exit path $\infty$-category, \textit{if it exists}, is the idempotent complete $\infty$-category classifying constructible sheaves. In this way, the classification of constructible sheaves comes for free at the cost of not having a concrete model for the exit path $\infty$-category. 

We recall the notion of atomic objects which provide a useful characterisation of presentable $\infty$-categories of the form $\mathcal{P}(\mathcal{C})$ for some $\infty$-category $\mathcal{C}$. We refer to \cite[\S 2.2]{ClausenOrsnesJansen} for details. An object $x$ of a presentable $\infty$-category $\mathcal{D}$ is \textit{atomic} if the functor $\operatorname{Map}(x,-)\colon \mathcal{D}\rightarrow \mathcal{S}$ commutes with \textit{all} colimits. We write $\mathcal{D}^{atom}\subset \mathcal{D}$ for the full subcategory of atomic objects. Consider the functor 
\begin{align*}
\operatorname{Cat}_\infty\rightarrow \operatorname{Pr}^L
\end{align*}
sending an $\infty$-category $\mathcal{C}$ to the presheaf $\infty$-category $\mathcal{P}(\mathcal{C})$ and a morphism $f\colon \mathcal{C}\rightarrow \mathcal{D}$ to the unique colimit preserving functor $\mathcal{P}(C)\rightarrow \mathcal{P}(D)$ restricting via the Yoneda embedding to the composite $y_D\circ f\colon \mathcal{C}\rightarrow \mathcal{D}\rightarrow \mathcal{P}(\mathcal{D})$; equivalently, $f$ is sent to the left adjoint of restriction along $f$ (\cite[Corollary F]{HaugsengHebestreitLinskensNuiten}). The characterisation of presentable $\infty$-categories of the form $\mathcal{P}(\mathcal{C})$ is supplied by the following proposition (\cite[Proposition 2.7]{ClausenOrsnesJansen}).

\begin{proposition}\label{presheaf categories and atomic objects}
The functor $\operatorname{Cat}_\infty\rightarrow \operatorname{Pr}^L$, $\mathcal{C}\mapsto \mathcal{P}(\mathcal{C})$, restricts to an equivalence between the $\infty$-category of idempotent-complete small $\infty$-categories and the subcategory of $\operatorname{Pr}^L$ whose objects are the presentable $\infty$-categories generated by atomic objects and whose morphisms are the colimit-preserving functors whose right adjoint also preserves colimits. The inverse functor is given by $\mathcal{D}\mapsto \mathcal{D}^{atom}$.
\end{proposition}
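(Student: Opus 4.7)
My plan is to exhibit $\mathcal{D}\mapsto \mathcal{D}^{atom}$ as the inverse functor and verify this gives an equivalence. First I would check that $\mathcal{C}\mapsto\mathcal{P}(\mathcal{C})$ lands in the specified subcategory of $\operatorname{Pr}^L$: the representables $y(c)\in\mathcal{P}(\mathcal{C})$ are atomic because $\operatorname{Map}_{\mathcal{P}(\mathcal{C})}(y(c),-)\simeq\operatorname{ev}_c$ by Yoneda and colimits in $\mathcal{P}(\mathcal{C})$ are pointwise; they generate under colimits by the usual density statement (\cite[Theorem 5.1.5.6]{LurieHTT}); and for any $f\colon \mathcal{C}\to\mathcal{D}$, the induced colimit-preserving $f_!\colon\mathcal{P}(\mathcal{C})\to\mathcal{P}(\mathcal{D})$ has right adjoint given by restriction along $f$, which preserves colimits pointwise.

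The technical heart is the following characterisation: \emph{the atomic objects of $\mathcal{P}(\mathcal{C})$ are exactly the retracts of representables}. Retracts of atomics are atomic; conversely, given atomic $F$, write $F$ as the canonical colimit of representables indexed by $\mathcal{C}_{/F}$; since $\operatorname{Map}(F,-)$ commutes with this colimit, the identity $\operatorname{id}_F$ factors through some $y(c)\to F$, exhibiting $F$ as a retract of $y(c)$. Hence, when $\mathcal{C}$ is idempotent-complete, the Yoneda embedding induces an equivalence $\mathcal{C}\xrightarrow{\sim}\mathcal{P}(\mathcal{C})^{atom}$ (fully faithful by Yoneda, essentially surjective by the above combined with idempotent completeness). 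Conversely, given $\mathcal{D}$ in the target subcategory, the inclusion $\mathcal{D}^{atom}\hookrightarrow\mathcal{D}$ extends uniquely along Yoneda to a colimit-preserving functor $\Phi_\mathcal{D}\colon\mathcal{P}(\mathcal{D}^{atom})\to\mathcal{D}$. To see $\Phi_\mathcal{D}$ is fully faithful, it suffices to check on representable sources (since both sides of $\operatorname{Map}(F,G)\to\operatorname{Map}(\Phi_\mathcal{D}F,\Phi_\mathcal{D}G)$ send colimits in $F$ to limits); for $F=y(x)$ the map becomes $G(x)\to\operatorname{Map}_\mathcal{D}(x,\Phi_\mathcal{D}G)$, and both sides are colimit-preserving in $G$ (the right side using atomicity of $x$ in $\mathcal{D}$) and agree on representable $G$ by Yoneda, hence agree for all $G$. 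Essential surjectivity then follows from the atomic generation hypothesis, as the essential image is closed under colimits and contains the generators. Note that $\mathcal{D}^{atom}$ is automatically small (atomic objects are in particular compact) and idempotent-complete (closed under retracts in the idempotent-complete presentable $\mathcal{D}$).

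At the morphism level, a colimit-preserving $L\colon\mathcal{D}\to\mathcal{D}'$ with colimit-preserving right adjoint $R$ restricts to a functor $\mathcal{D}^{atom}\to(\mathcal{D}')^{atom}$, since $\operatorname{Map}_{\mathcal{D}'}(Lx,-)\simeq\operatorname{Map}_\mathcal{D}(x,R-)$ preserves colimits when $x$ is atomic. That the two assignments $\mathcal{C}\mapsto\mathcal{P}(\mathcal{C})$ and $\mathcal{D}\mapsto\mathcal{D}^{atom}$ are mutually inverse functors of $\infty$-categories then follows formally from the universal property of the Yoneda embedding, which pins down both constructions by their values on objects and the canonical natural transformations. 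The main obstacle is the key lemma identifying atomics as retracts of representables; the remaining steps are a direct consequence of this identification together with density.
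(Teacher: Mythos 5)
Your argument is correct, but note that the paper itself gives no proof of this proposition: it is quoted verbatim from \cite[Proposition 2.7]{ClausenOrsnesJansen}, so there is nothing internal to compare against. What you have written is essentially the standard proof, and it matches both the cited reference and Lurie's treatment of ``completely compact'' objects in \cite[\S 5.1.6]{LurieHTT} (in particular Proposition 5.1.6.8, which is exactly your key lemma that the atomic objects of $\mathcal{P}(\mathcal{C})$ are the retracts of representables). All the individual steps check out: the retract argument via the canonical colimit over $\mathcal{C}_{/F}$ and the fact that $\pi_0$ preserves colimits of spaces; smallness and idempotent-completeness of $\mathcal{D}^{atom}$; full faithfulness of $\Phi_{\mathcal{D}}$ by reduction to representables and colimit-preservation in $G$; and the two-sided implication between ``$L$ preserves atomics'' and ``the right adjoint of $L$ preserves colimits''. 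The only place you are terser than a complete write-up would demand is the final step, where ``mutually inverse'' is asserted to follow formally: to get an equivalence of $\infty$-categories (rather than a bijection on equivalence classes) one should spell out that the universal property $\operatorname{Fun}^{L}(\mathcal{P}(\mathcal{C}),\mathcal{E})\simeq\operatorname{Fun}(\mathcal{C},\mathcal{E})$ identifies the subcategory of morphisms with colimit-preserving right adjoint with $\operatorname{Fun}(\mathcal{C},\mathcal{P}(\mathcal{D})^{atom})\simeq\operatorname{Fun}(\mathcal{C},\mathcal{D})$, using idempotent-completeness of $\mathcal{D}$ and your key lemma; this gives full faithfulness on mapping spaces directly, and essential surjectivity is your $\Phi_{\mathcal{D}}$ argument. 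That is a presentational gap, not a mathematical one.
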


If $P$ is a poset satisfying the ascending chain condition and $(X,\pi\colon X\rightarrow P)$ is a stratified étale $\infty$-stack, then we have a geometric morphism of $\infty$-topoi
\begin{align*}
\pi_*\colon \operatorname{Shv}(X)\rightarrow \operatorname{Fun}(P,\mathcal{S}),\quad (\pi_*\mathcal{F})(p)=\mathcal{F}(X_{\geq p}),
\end{align*}
and an induced comparison functor
\begin{align*}
\pi^*\colon \operatorname{Fun}(P,\mathcal{S})\rightarrow \operatorname{Shv}_P^{\operatorname{cbl}}(X).
\end{align*}
(see the discussion before \cite[Theorem 3.4]{ClausenOrsnesJansen}). In line with \cite{ClausenOrsnesJansen}, we make the following definition.

\begin{definition}\label{definition exit path category}
Let $P$ be a poset satisfying the ascending chain condition.
\begin{enumerate}
\item We say that a stratified étale $\infty$-stack $(X, P)$ \textit{admits an exit path $\infty$-category} if the following conditions hold:
\begin{enumerate}
\item The full subcategory $\operatorname{Shv}^{\operatorname{cbl}}_P(X)\subset\operatorname{Shv}(X)$ is closed under all limits and colimits.
\item The $\infty$-category $\operatorname{Shv}^{\operatorname{cbl}}_P(X)$ is generated under colimits by a set of atomic objects.
\item The pullback $\pi^*\colon \operatorname{Fun}(P, \mathcal{S})\rightarrow \operatorname{Shv}^{\operatorname{cbl}}_P(X)$ preserves all limits (and colimits, but that is automatic).
\end{enumerate}
\item If $(X, P)$ admits an exit path $\infty$-category, we define its \textit{exit path $\infty$-category} to be the opposite category of the full subcategory of atomic constructible sheaves:
\begin{align*}
\Pi(X,P):=\left(\operatorname{Shv}^{\operatorname{cbl}}_P(X)^{atom}\right)^{\operatorname{op}}.
\end{align*}
\item If $f\colon (X,P)\rightarrow (Y,Q)$ is a map of stratified étale $\infty$-stacks that admit exit path $\infty$-categories, we say that $f$ \textit{respects exit path $\infty$-categories} if the pullback functor
\begin{align*}
f^*\colon \operatorname{Shv}^{\operatorname{cbl}}_Q(Y)\rightarrow \operatorname{Shv}^{\operatorname{cbl}}_P(X)
\end{align*}
preserves limits (and colimits, but that is automatic).\qedhere
\end{enumerate}
\end{definition}

If $(X,P)$ admits an exit path $\infty$-category, then it follows from \Cref{presheaf categories and atomic objects} that there is an induced ``exodromy'' equivalence (cf.~\cite{BarwickGlasmanHaine} for the terminology)
\begin{align*}
\operatorname{Fun}(\Pi(X,P),\mathcal{S})\xrightarrow{\ \sim \ } \operatorname{Shv}^{\operatorname{cbl}}_P(X).
\end{align*}

It also follows from \Cref{presheaf categories and atomic objects} that condition (c) of part (1) above implies that the pullback map
\begin{align*}
\pi^*\colon \operatorname{Fun}(P,\mathcal{S})\rightarrow \operatorname{Shv}^{\operatorname{cbl}}_P(X)
\end{align*}
is recovered by precomposition with a uniquely determined functor $\Pi(X,P)\rightarrow P$.

More generally, by \Cref{presheaf categories and atomic objects}, the condition that $f\colon (X,P)\rightarrow (Y,Q)$ respects exit path $\infty$-categories is equivalent to the condition that the induced pullback functor on constructible sheaves, $f^*\colon \operatorname{Shv}^{\operatorname{cbl}}_Q(Y)\rightarrow \operatorname{Shv}^{\operatorname{cbl}}_P(X)$, is given, via the exodromy equivalence above, by composition with a functor
\begin{align*}
\Pi(X,P)\rightarrow \Pi(Y,Q).
\end{align*}
What is more, this functor is uniquely determined as the restriction to atomic objects of the left adjoint to $f^*\colon \operatorname{Shv}^{\operatorname{cbl}}_Q(Y)\rightarrow \operatorname{Shv}^{\operatorname{cbl}}_P(X)$.

\begin{example}\label{Lurie exodromy}
If a stratified topological space $(X,P)$ is paracompact, locally of singular shape (e.g.~locally contractible) and conically stratified over a poset satisfying the ascending chain condition, then it admits an exit path $\infty$-category in the sense of (1) above: conditions (b) and (c) follow from the work of Lurie (\cite[Theorem A.9.3]{LurieHA}) and condition (a) is verified in \cite[Corollary 5.20]{PortaTeyssier}. Moreover, the exit path $\infty$-category can be obtained as a concrete quasi-category (\cite[Definition A.6.2]{LurieHA}, see \cite[\S 2]{OrsnesJansen} for a short overview).

Lurie also shows that any stratum preserving map $f\colon (Y,P)\rightarrow (X,P)$ between two such stratified spaces (stratified over the same poset $P$) will preserve exit path $\infty$-categories by \cite[Proposition A.9.16]{LurieHTT} (see also \Cref{all maps are exodromic} below).

A concrete instance of this example is the reductive Borel--Serre compactification of a locally symmetric space associated to a neat arithmetic group (see \cite{OrsnesJansen} and \cite{ClausenOrsnesJansen}). An example that goes beyond conically stratified topological \textit{spaces} is that of the Deligne--Mumford--Knudsen compactification, also known as the moduli stack of stable nodal curves (see \cite{OrsnesJansen23a}). We will comment more on this at the end of this section.
\end{example}

\begin{observation}\label{maps of stratifying posets}
Note that if $(X,P)$ and $(Y,Q)$ are stratified $\infty$-stacks whose exit path $\infty$-categories identify with the stratifying posets, then any morphism $(X,P)\rightarrow (Y,Q)$ respects exit path $\infty$-categories as the required map $P\rightarrow Q$ is part of the data.
\end{observation}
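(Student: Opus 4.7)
The plan is to leverage the exodromy equivalences provided by the hypothesis. Denote the two stratifications by $\pi_X\colon X\to P$ and $\pi_Y\colon Y\to Q$. The assumption that the exit path $\infty$-categories identify with the stratifying posets means, in view of the discussion after \Cref{definition exit path category}, that the pullback functors $\pi_X^*\colon \operatorname{Fun}(P,\mathcal{S})\to \operatorname{Shv}^{\operatorname{cbl}}_P(X)$ and $\pi_Y^*\colon \operatorname{Fun}(Q,\mathcal{S})\to \operatorname{Shv}^{\operatorname{cbl}}_Q(Y)$ are themselves equivalences. The strategy is then to identify $f^*$, restricted to constructible sheaves, with precomposition by $\sigma$, which visibly preserves limits.

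First, the data of the morphism of stratified stacks yields the commuting square $\pi_Y\circ f = \sigma\circ \pi_X$ of $\infty$-stacks. Applying $\operatorname{Shv}(-)$ with pullback functoriality (set up in \S \ref{stacks and sheaves} and encoded in the diagram (\ref{sheaf functor diagram})) produces a homotopy commutative square giving $f^*\circ \pi_Y^* \simeq \pi_X^*\circ \sigma^*$, where $\sigma^*\colon \operatorname{Fun}(Q,\mathcal{S})\to \operatorname{Fun}(P,\mathcal{S})$ is restriction along $\sigma$. I would then check that $f^*$ sends $Q$-constructible sheaves to $P$-constructible sheaves: since $f$ maps each stratum $X_p$ into $Y_{\sigma(p)}$, restricting $f^*\mathcal{F}$ to $X_p$ amounts to pulling back $\mathcal{F}|_{Y_{\sigma(p)}}$ along $X_p\to Y_{\sigma(p)}$, which preserves local constancy. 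Hence the identity above restricts to an identity of functors landing in $\operatorname{Shv}^{\operatorname{cbl}}_P(X)$, and inverting the equivalences $\pi_X^*$, $\pi_Y^*$ yields $f^*\simeq \pi_X^*\circ \sigma^*\circ (\pi_Y^*)^{-1}$ on constructible sheaves.

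To conclude, both $\pi_X^*$ and $(\pi_Y^*)^{-1}$ are equivalences and therefore preserve all limits, while $\sigma^*$ preserves all limits since limits in functor $\infty$-categories valued in $\mathcal{S}$ are computed pointwise. Consequently, $f^*\colon \operatorname{Shv}^{\operatorname{cbl}}_Q(Y)\to \operatorname{Shv}^{\operatorname{cbl}}_P(X)$ preserves limits, which by \Cref{definition exit path category}(3) is precisely what it means for $f$ to respect exit path $\infty$-categories. There is no real obstacle; the only mildly non-trivial ingredient is the compatibility of the sheaf functor with compositions of morphisms of $\infty$-stacks, which is already built into the construction of $\operatorname{Shv}(-)$ on $\operatorname{Shv}(\mathfrak{T})$.
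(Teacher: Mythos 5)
Your argument is correct and is exactly the intended unpacking of the observation: under the hypothesis the comparison functors $\pi_X^*\colon\operatorname{Fun}(P,\mathcal{S})\to\operatorname{Shv}^{\operatorname{cbl}}_P(X)$ and $\pi_Y^*$ are equivalences, and the compatibility $f^*\circ\pi_Y^*\simeq\pi_X^*\circ\sigma^*$ identifies $f^*$ on constructible sheaves with restriction along $\sigma$, which preserves all limits. The paper offers no proof beyond the one-line justification in the statement, so the only remarks worth making are that your stratum-by-stratum check that $f^*$ preserves constructibility and the naturality of $\pi^*$ in the stratified stack (which requires a small direct verification, since $\pi^*$ is defined via the explicit geometric morphism $(\pi_*\mathcal{F})(p)=\mathcal{F}(X_{\geq p})$ rather than by applying $\operatorname{Shv}(-)$ to $\pi$, the poset $P$ not being an object of $\mathfrak{T}$) are precisely the routine details being suppressed.
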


Together with the permanence properties of \Cref{permanence properties} below, \Cref{maps of stratifying posets} provides a powerful tool for identifying exit path $\infty$-categories.

\begin{remark}\label{all maps are exodromic}
After this note was originally written, Haine--Porta--Teyssier made the remarkable observation that in fact condition (3) in the definition above is automatic (\cite[Theorem 3.2.3]{HainePortaTeyssier24}). More precisely, any morphism of stratified $\infty$-stacks admitting exit path $\infty$-categories \textit{must} preserve these!
\end{remark}

The result below is a direct generalisation of \cite[Proposition 3.6]{ClausenOrsnesJansen} and the proof is completely analogous.

\begin{proposition}\label{permanence properties}\ 
\begin{enumerate}
\item Let $P$ be a poset satisfying the ascending chain condition and let $(X,P)$ be a stratified étale $\infty$-stack admitting an exit path $\infty$-category. For a locally closed subset $Q\subset P$ consider the substack $X_Q=X\times_P Q$, naturally stratified over $Q$. The stratified stack $(X_Q,Q)$ admits an exit path $\infty$-category, the inclusion $(X_Q, Q)\hookrightarrow (X,P)$ respects $\infty$-categories, and
\begin{align*}
\Pi(X_Q,Q)\xrightarrow{\ \sim\ } \Pi(X,P)\mathop{\times}_{P} Q.
\end{align*}
\item Let $K$ be a small $\infty$-category and $\{(X_k, P_k)\}_{k\in K}$ a $K$-shaped diagram of stratified étale $\infty$-stacks equipped with a co-cone $(X_\infty, P_\infty)$. Suppose:
\begin{enumerate}[label=\roman*)]
\item $P_k$ satisfies the ascending chain condition for each $k\in K$ and so does $P_\infty$;
\item $(X_k, P_k)$ admits an exit path $\infty$-category for each $k\in K$;
\item for every $k\rightarrow k'$, the map $(X_k, P_k)\rightarrow (X_{k'}, P_{k'})$ respects exit path $\infty$-categories;
\item we have $\operatorname{Shv}(X_\infty)\xrightarrow{\sim}\varprojlim_{k\in K^{op}}\operatorname{Shv}(X_k)$ and $\operatorname{Shv}^{\operatorname{cbl}}_{P_\infty}(X_\infty)\xrightarrow{\sim}\varprojlim_{k\in K^{op}}\operatorname{Shv}^{\operatorname{cbl}}_{P_k}(X_k)$.
\end{enumerate}
Then
\begin{itemize}
\item $(X_\infty, P_\infty)$ admits an exit path $\infty$-category;
\item the map $(X_k, P_k)\rightarrow (X_\infty, P_\infty)$ respects exit path $\infty$-categories for every $k\in K$;
\item $\Pi(X_\infty, P_\infty)\xleftarrow{\sim}\varinjlim_{k\in K}\Pi(X_k, P_k)$.
\end{itemize}
\end{enumerate}
\end{proposition}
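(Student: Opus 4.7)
The plan is to exploit the exodromy equivalence $\operatorname{Fun}(\Pi(X,P), \mathcal{S}) \simeq \operatorname{Shv}^{\operatorname{cbl}}_P(X)$ afforded by \Cref{presheaf categories and atomic objects}, together with the descent and recollement results of \S\ref{stacks and sheaves}, and to verify the three conditions of \Cref{definition exit path category} for each newly constructed stratified $\infty$-stack by transporting slice and kernel identifications along the equivalence. The strategy is the direct generalisation of \cite[Proposition 3.6]{ClausenOrsnesJansen}: the crucial geometric inputs are \Cref{sheaves on open substack generalised} and \Cref{sheaves on closed substack} for part (1), and the usual $\operatorname{Pr}^L$-calculus for part (2).

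For part (1), I would reduce the locally closed case to the open and closed cases and combine. If $Q \subseteq P$ is open, then $X_Q \hookrightarrow X$ is an open embedding and \Cref{sheaves on open substack generalised} gives $\operatorname{Shv}^{\operatorname{cbl}}_Q(X_Q) \simeq \operatorname{Shv}^{\operatorname{cbl}}_P(X)_{/\pi^*\chi_Q}$, where $\chi_Q\colon P \to \mathcal{S}$ is the subterminal functor represented by the sieve $Q \subseteq P$. Passing through exodromy, and noting that $\Pi(X,P) \times_P Q$ is a cosieve in $\Pi(X,P)$, the slice of a presheaf $\infty$-category at such a subterminal object is again a presheaf $\infty$-category, namely $\operatorname{Fun}(\Pi(X,P) \times_P Q, \mathcal{S})$. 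If $Q \subseteq P$ is closed with open complement $U$, then \Cref{sheaves on closed substack} identifies $\operatorname{Shv}^{\operatorname{cbl}}_Q(X_Q)$ with the kernel of pullback to $X_U$; via exodromy and the open case for $U$, this kernel identifies with $\operatorname{Fun}(\Pi(X,P) \times_P Q, \mathcal{S})$, because a functor trivial on the cosieve over $U$ extends uniquely from any functor on the complementary sieve over $Q$. The locally closed case follows by composing these two. In every case, the three conditions of \Cref{definition exit path category} transfer formally from the identification as a presheaf $\infty$-category on $\Pi(X,P) \times_P Q$, and the inclusion $(X_Q, Q) \hookrightarrow (X,P)$ respects exit path $\infty$-categories by construction (it corresponds to restriction along $\Pi(X,P) \times_P Q \hookrightarrow \Pi(X,P)$).

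For part (2), the descent hypothesis (iv) combined with exodromy for each $k \in K$ yields
\begin{align*}
\operatorname{Shv}^{\operatorname{cbl}}_{P_\infty}(X_\infty) \simeq \varprojlim_{k \in K^{\operatorname{op}}} \operatorname{Shv}^{\operatorname{cbl}}_{P_k}(X_k) \simeq \varprojlim_{k \in K^{\operatorname{op}}} \operatorname{Fun}(\Pi(X_k, P_k), \mathcal{S}).
\end{align*}
By hypothesis (iii) and \Cref{presheaf categories and atomic objects}, each pullback transition functor and its right adjoint preserve colimits, so this limit is taken inside the distinguished subcategory of $\operatorname{Pr}^L$ appearing in that proposition. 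Passing to left adjoints rewrites the limit as a colimit in $\operatorname{Pr}^L$, which under the equivalence of \Cref{presheaf categories and atomic objects} corresponds to the presheaf $\infty$-category on $\varinjlim_K \Pi(X_k, P_k)$ in the $\infty$-category of idempotent-complete small $\infty$-categories. Setting $\Pi(X_\infty, P_\infty)$ to this colimit, the three conditions of \Cref{definition exit path category} follow essentially formally: atomic generation from \Cref{presheaf categories and atomic objects} itself, closure under limits and colimits in $\operatorname{Shv}(X_\infty)$ from (iv) together with the corresponding property at each $k$, and compatibility of $\pi_\infty^*$ with limits from the same compatibility at each $k$ via (iv). The cocone maps respecting exit path $\infty$-categories holds by construction.

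The main technical obstacle, and the only genuine content beyond bookkeeping, lies in part (1) in carefully establishing the slice and kernel identifications on the functor-category side — specifically, that slicing (resp.~taking the kernel of pullback to a complementary cosieve on) a presheaf $\infty$-category at a subterminal object corresponding to a sieve yields the presheaf $\infty$-category on that sieve — and in part (2) in verifying that the limit really does take place inside the distinguished subcategory of \Cref{presheaf categories and atomic objects}, so that the passage-to-adjoints trick delivers a presheaf $\infty$-category on a colimit of small $\infty$-categories. Both amount to exercises in the $\operatorname{Pr}^L$-formalism carried out in detail in the proof of \cite[Proposition 3.6]{ClausenOrsnesJansen}.
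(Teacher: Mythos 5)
Your proposal is correct and follows essentially the same route as the paper's proof: part (1) by factoring the locally closed inclusion into open and closed pieces and transporting the slice and kernel identifications of \Cref{sheaves on open substack generalised} and \Cref{sheaves on closed substack} through the exodromy equivalence (the paper phrases the functor-category side as restriction together with left, resp.\ right, Kan extension along $\Pi(X,P)\times_P Q\hookrightarrow \Pi(X,P)$), and part (2) by the passage-to-adjoints argument in $\operatorname{Pr}^L$ via \Cref{presheaf categories and atomic objects}. The only cosmetic slip is that for $Q\subseteq P$ open the subterminal object $\chi_Q$ corresponds to a cosieve (upward closed subset), not a sieve.
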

\begin{proof}
Factoring a locally closed inclusion as a closed inclusion followed by an open inclusion, it suffices to treat those two cases separately in order to prove part 1. For $Q\subset P$ open, set $U=X_Q\subset X$. Then $\operatorname{Shv}(U)\xrightarrow{\sim} \operatorname{Shv}(X)_{/y(U)}$ by \Cref{sheaves on open substack generalised} and since this description is compatible with pullback, it passes to constructible sheaves: $\operatorname{Shv}_Q^{\operatorname{cbl}}(U)\xrightarrow{\sim} \operatorname{Shv}_P^{\operatorname{cbl}}(X)_{/y(U)}$. This verifies conditions (a) and (b) of \Cref{definition exit path category} (1), and doing the same thing with the open subset $Q\subset P$, we can also verify condition (c), so $(U, Q)$ does indeed admit an exit path $\infty$-category. The claim that the inclusion $(U, Q)\hookrightarrow (X, P)$ preserves exit path $\infty$-categories also follows from these identifications. Now, the explicit identification of the exit path $\infty$-category follows from \Cref{presheaf categories and atomic objects} and the equivalence
\begin{align*}
\operatorname{Fun}(\Pi(X,P)\mathop{\times}_{P}Q,\mathcal{S})\xrightarrow{\sim} \operatorname{Fun}(\Pi(X,P),\mathcal{S})_{/F_U}
\end{align*}
given by restriction and left Kan extension along $\Pi(X,P)\mathop{\times}_{P}Q\rightarrow \Pi(X,P)$, and where the functor $F_U\colon \Pi(X,P)\rightarrow \mathcal{S}$ is the one that restricts to the terminal functor over $\Pi(X,P)\mathop{\times}_{P}Q$ and to the initial functor on the complement.

For $Q\subset P$ a closed subset, we argue similarly, but now for $Z=X_Q\subset X$ with complement $U\subseteq X$, we use the identification $\operatorname{Shv}(Z)\xrightarrow{\sim} \operatorname{ker}(\operatorname{Shv}(X)\rightarrow \operatorname{Shv}(U))$ of \Cref{sheaves on closed substack}. The final identification of the exit path category now follows from \Cref{presheaf categories and atomic objects} and the equivalence
\begin{align*}
\operatorname{Fun}(\Pi(X,P)\mathop{\times}_{P}Q,\mathcal{S})\xrightarrow{\sim}\operatorname{ker}\big( \operatorname{Fun}(\Pi(X,P),\mathcal{S})\rightarrow \operatorname{Fun}(\Pi(U,Q),\mathcal{S})\big)
\end{align*}
given by restriction and right Kan extension along $\Pi(X,P)\mathop{\times}_{P}Q\rightarrow \Pi(X,P)$, and where the right hand side consists of the functors out of $\Pi(X,P)$ which restrict to the terminal functor on $\Pi(U,Q)$.

Part 2 follows more or less directly from \Cref{presheaf categories and atomic objects}. Combined with assumptions (ii), (iii) and (iv), we see that conditions (a) and (b) of \Cref{definition exit path category} (1) are satisfied. Condition (c) follows from the additional observation that each $\operatorname{Fun}(P_\infty,\mathcal{S})\rightarrow\operatorname{Fun}(P_k,\mathcal{S})$ preserves limits. Thus we conlude that $X_\infty\rightarrow P_\infty$ does indeed admit an exit path $\infty$-category. The claim that the maps $(X_k, P_k)\rightarrow (X_\infty, P_\infty)$ respect exit path $\infty$-categories follows from assumptions (ii), (iii) and (iv). The equivalence $\Pi(X_\infty, P_\infty)\xleftarrow{\sim}\varinjlim_{k\in K}\Pi(X_k, P_k)$ is a direct consequence of \Cref{presheaf categories and atomic objects}.
\end{proof}

Let us apply this immediately to show that the exit path $\infty$-category of a stratified stack can be calculated using a suitably nice atlas. This follows from the observation made in \Cref{locally constant and constructible sheaves via the Cech nerve}, and although it may not be a useful calculational tool in itself, it tells us that our definition agrees with another natural candidate for defining the exit path $\infty$-category of a stratified stack, namely by using the \v{C}ech nerve of an atlas.

\begin{corollary}\label{exit path category in terms of an atlas}
Let $(X,P)$ be a stratified $\infty$-stack with an étale atlas $f\colon U\rightarrow X$, and suppose $P$ satisfies the ascending chain condition. Let $\check{C}(f)\colon [n]\mapsto (U_n,P)$ denote the stratified \v{C}ech nerve of the atlas. If each $(U_n,P)$ admits an exit path $\infty$-category, and all the induced maps $(U_n, P) \rightarrow (U_m, P)$ preserve exit path $\infty$-categories, then $(X, P)$ admits an exit path $\infty$-category, the (stratified) atlas morphism $f\colon (U,P)\rightarrow (X,P)$ preserves exit path $\infty$-categories, and
\begin{align*}
\Pi(X,P)\xleftarrow{\ \sim \ } \varinjlim_{[n]\in \Delta^{op}} \Pi(U_n,P).
\end{align*}
\end{corollary}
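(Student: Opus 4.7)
The strategy is to apply part (2) of \Cref{permanence properties} to the simplicial diagram $[n]\mapsto (U_n,P)$ indexed by $K=\Delta^{\operatorname{op}}$, with co-cone $(X,P)$. If we verify the four hypotheses (i)--(iv) of that proposition, all three conclusions of the corollary drop out at once: $(X,P)$ admits an exit path $\infty$-category, the map $(U,P)=(U_0,P)\rightarrow (X,P)$ preserves exit path $\infty$-categories, and the colimit formula $\Pi(X,P)\xleftarrow{\sim}\varinjlim_{[n]\in\Delta^{\operatorname{op}}}\Pi(U_n,P)$ holds.

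The verifications are essentially bookkeeping. Hypothesis (i) is trivial since the stratifying poset is the constant $P$ on the whole diagram and we have assumed $P$ satisfies ACC. Hypothesis (ii) is the standing assumption that each $(U_n,P)$ admits an exit path $\infty$-category, and (iii) is the assumption that all structure maps $(U_n,P)\rightarrow (U_m,P)$ of the stratified \v{C}ech nerve preserve them. For (iv), the equivalence $\operatorname{Shv}(X)\xrightarrow{\sim}\varprojlim_{[n]\in\Delta^{\operatorname{op}}}\operatorname{Shv}(U_n)$ is just \Cref{sheaves via cech nerve} applied to the atlas $f$, and the equivalence $\operatorname{Shv}^{\operatorname{cbl}}_P(X)\xrightarrow{\sim}\varprojlim_{[n]\in\Delta^{\operatorname{op}}}\operatorname{Shv}^{\operatorname{cbl}}_P(U_n)$ for the stratified \v{C}ech nerve is exactly \Cref{locally constant and constructible sheaves via the Cech nerve}.

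There is really no obstacle here: the corollary is a direct specialisation of the permanence property for colimits, once one recognises the stratified \v{C}ech nerve as a diagram of the required shape, and the two descent statements for sheaves and constructible sheaves as instances of the equivalences already established. The only thing to emphasise is that we use the atlas $f$ twice in hypothesis (iv)---once for all sheaves and once for constructible ones---but this is precisely what the earlier \Cref{sheaves via cech nerve} and \Cref{locally constant and constructible sheaves via the Cech nerve} provide.
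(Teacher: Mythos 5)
Your proposal is correct and is exactly the argument the paper intends: the corollary is stated as an immediate application of \Cref{permanence properties}(2) to the $\Delta^{\operatorname{op}}$-shaped diagram $[n]\mapsto (U_n,P)$ with co-cone $(X,P)$, with hypothesis (iv) supplied by \Cref{sheaves via cech nerve} and \Cref{locally constant and constructible sheaves via the Cech nerve} and the remaining hypotheses being the stated assumptions. Nothing is missing.
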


\begin{remark}
It is somewhat unsatisfying that we have to ensure that all levels of the \v{C}ech nerve admit exit path $\infty$-categories and that all the structure maps preserve exit path $\infty$-categories --- this is because the property of admitting an exit path $\infty$-category is not well-behaved under arbitrary pullbacks. In practice, however, it is rarely so bad. If for example each $U_n$ is paracompact, locally of singular shape and conically stratified over $P$, then by \Cref{Lurie exodromy}, they admit exit path $\infty$-categories, and all structure maps preserve exit path $\infty$-categories.
\end{remark}

We can also use \Cref{permanence properties} to treat certain quotient stacks. Let $(X,P)$ be a stratified $\infty$-stack with $P$ satisfying the ascending chain condition and let $G$ be a group acting compatibly on $X$ and $P$, i.e. such that the stratification map is equivariant: $x\in X_p\Rightarrow g.x\in X_{g.p}$ for all $g\in G$. We say that $G$ acts on the stratified stack $(X,P)$. Since $P$ satisfies the ascending chain condition, the following implication must hold:
\begin{align*}
p\leq g.p\Rightarrow p=g.p\quad\text{ for all }p\in P,\ g\in G
\end{align*}
It follows that there is a natural poset structure on the quotient set $G\backslash P$ identifying it with the quotient in the category of posets: $[p]\leq [q]$ if there is a $g\in G$ such that $g.p\leq q$ in $P$. Moreover, the stratification of $X\rightarrow P$ naturally descends to define a stratification of the quotient $\infty$-stack $[G\backslash X]\rightarrow G\backslash P$ (\cite[Lemma 2.34]{ClausenOrsnesJansen}). Note also that if $G$ is a group acting on a poset $P$, then the colimit $\varinjlim_{BG} P$ in $\operatorname{Cat}_\infty$ naturally identifies with the action category $G\backslash\backslash P$ with objects $p\in P$ and whose morphisms $p\rightarrow p'$ are the $g\in G$ such that $g.p\leq p'$ (\cite[Corollary 2.34]{ClausenOrsnesJansen}).

\begin{corollary}
Let $(X,P)$ be a stratified étale $\infty$-stack with $P$ satisfying the ascending chain condition, let $G$ be a group acting on $(X,P)$. If $(X,P)$ admits an exit path $\infty$-category, then so does the resulting quotient stack $([G\backslash X], G\backslash P)$, the quotient map preserves exit path $\infty$-categories, and moreover we have an equivalence:
\begin{align*}
\Pi([G\backslash X],G\backslash P)\xleftarrow{\ \sim \ } \varinjlim_{BG} \Pi(X,P).
\end{align*}
In particular, if $\Pi(X,P)\xrightarrow{\sim} P$, then the exit path $\infty$-category of the quotient stack naturally identifies with the action category, $\Pi([G\backslash X],G\backslash P)\xleftarrow{\sim}G\backslash\backslash P$.
\end{corollary}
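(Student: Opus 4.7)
My plan is to deduce this corollary from part (2) of the permanence properties \Cref{permanence properties} applied to the constant $BG$-shaped diagram $k \mapsto (X,P)$. The first step is to realize $([G\backslash X], G\backslash P)$ as the colimiting co-cone of this diagram. In $\operatorname{Shv}(\mathfrak{T})$, the quotient stack $[G\backslash X]$ is by definition the colimit $\varinjlim_{BG} X$ of the action diagram, and analogously the poset quotient $G\backslash P$ is the stratifying target. So I would set $K=BG$, $X_k=X$, $P_k=P$ for every $k$, and $(X_\infty,P_\infty)=([G\backslash X],G\backslash P)$.

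Then I would verify the four hypotheses of \Cref{permanence properties}(2). Hypothesis (i) is the ACC for $G\backslash P$: this follows from the implication $p\leq g.p\Rightarrow p=g.p$ already noted before the statement of the corollary, which forces the quotient map $P\twoheadrightarrow G\backslash P$ to reflect strict inequalities and hence preserves ACC. Hypothesis (ii) is given, and hypothesis (iii) is trivial because each $g\in G$ acts as an autoequivalence of $(X,P)$ and any equivalence respects exit path $\infty$-categories. For hypothesis (iv), the sheaf statement $\operatorname{Shv}([G\backslash X])\xrightarrow{\sim}\varprojlim_{BG^{\operatorname{op}}}\operatorname{Shv}(X)$ follows from the construction of the sheaf functor on stacks (diagram~(\ref{sheaf functor diagram})), since it sends colimits in $\operatorname{Shv}(\mathfrak{T})$ to limits in $\operatorname{Cat}_\infty$. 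For the constructible version, the limit of the $\operatorname{Shv}^{\operatorname{cbl}}_P$'s embeds fully faithfully into the limit of the $\operatorname{Shv}$'s via the equivalence of the previous paragraph, so one only needs that a sheaf $\mathcal{F}$ on $[G\backslash X]$ is $(G\backslash P)$-constructible if and only if its pullback to $X$ is $P$-constructible; this is immediate because the quotient map $X\to[G\backslash X]$ is an effective epimorphism, the $p$-stratum of $X$ surjects onto the $[p]$-stratum of $[G\backslash X]$, and local constancy is detected on any étale cover (\Cref{locally constant and constructible sheaves via the Cech nerve}).

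With all hypotheses in hand, \Cref{permanence properties}(2) delivers at once that $([G\backslash X],G\backslash P)$ admits an exit path $\infty$-category, that the quotient map respects exit path $\infty$-categories, and that
\begin{align*}
\Pi([G\backslash X],G\backslash P)\xleftarrow{\ \sim \ }\varinjlim_{BG}\Pi(X,P).
\end{align*}
For the ``in particular'' clause, if $\Pi(X,P)\xrightarrow{\sim}P$, then the right-hand side becomes $\varinjlim_{BG}P$, which is identified with the action category $G\backslash\backslash P$ by \cite[Corollary 2.34]{ClausenOrsnesJansen} (as recalled just before the statement).

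The main obstacle I anticipate is hypothesis (iv) for constructible sheaves, i.e.\ ensuring that the descent description of $\operatorname{Shv}([G\backslash X])$ along $BG$ cuts out constructibility in the expected way; but this reduces to the formal observation that constructibility is preserved and reflected by pullback along an effective epimorphism compatible with the stratifications, so the verification is essentially bookkeeping rather than a substantial new argument.
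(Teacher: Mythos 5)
Your proposal is correct and follows essentially the same route as the paper: apply \Cref{permanence properties}(2) to the constant $BG$-shaped diagram, note that (i)--(iii) hold readily, obtain the sheaf-level descent from colimit-preservation of $\operatorname{Shv}(-)$, and reduce the constructible part of (iv) to the fact that constructibility is detected after pullback along $X\to[G\backslash X]$ because each stratum projection $X_p\to[G\backslash X]_{[p]}\simeq[G_p\backslash X_p]$ admits local sections. The paper's proof is just a terser version of the same argument, so no further comment is needed.
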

\begin{proof}
We readily see that conditions (i)-(iii) of \Cref{permanence properties} (2) hold. Applying $\operatorname{Shv}$ is colimit preserving, so we have an equivalence $\operatorname{Shv}([G\backslash X])\xrightarrow{\sim} \varprojlim_{BG} \operatorname{Shv}(X)$. Thus all we need to verify is that this equivalence restricts to an equivalence of the full subcategories of constructible sheaves, i.e.~we must show that if a sheaf $\mathcal{F}$ on $[G\backslash X]$ pulls back to a constructible sheaf on $X$, then $\mathcal{F}$ is itself constructible. But this is clear since for each $p\in P$, the projection $X_p\rightarrow [G\backslash X]_{[p]}$ admits local sections; indeed, the $[p]$-stratum identifies with the quotient of the $p$-stratum $X_p$ by the stabiliser $G_p$ of $p$, $[G\backslash X]_{[p]}\simeq [G_p\backslash X_p]$.
\end{proof}

\begin{remark}
If $X$ is a topological space with a group action by $G$, then we have a quotient $1$-stack classifying principal $G$-bundles $\pi\colon E\rightarrow B$ together with an equivariant map $\sigma\colon E\rightarrow X$ (see e.g.~\cite[p.286]{ArbarelloCornalbaGriffiths}). This agrees with the $\infty$-quotient stack $[G\backslash X]=\varinjlim_{BG}X$ as they both admit an atlas morphism from $X$ and the corresponding \v{C}ech nerves agree (see \cite{NikolausSchreiberStevenson}). Note, however, that if we work only with stratified topological spaces without moving into the realm of stacks, then one needs to be more careful when considering group actions, see for example \cite[Corollary 2.18 and Proposition 4.8]{ClausenOrsnesJansen}.
\end{remark}

As a more concrete application of \Cref{permanence properties}, we refer to the calculation in \cite{OrsnesJansen23a} where we identify the exit path $\infty$-category of the Deligne--Mumford--Knudsen compactification of the moduli stack of stable curves. The calculational strategy follows that of \cite{ClausenOrsnesJansen}, where we identify the exit path $\infty$-category of the reductive Borel--Serre compactification of a locally symmetric space associated to a \textit{neat} arithmetic group (the identification can also be found in \cite{OrsnesJansen}, but the calculational strategies are very different). The main idea is to exhibit a co-cone as in \Cref{permanence properties}, where the exit path $\infty$-category of each $(X_k, P_k)$, $k\in K$, identifies with the stratifying poset $P_k$ --- this should be interpreted as the analogue of being contractible for stratified spaces.

%

\subsection{Homotopy type, refinement and localisations}

If we consider the trivial stratification over the terminal poset $P = \ast$, then constructible sheaves are just locally constant sheaves and by comparing with \cite[\S A.1]{LurieHA}, we find that a (stratified) $\infty$-stack $(X,\ast)$ admits an exit path $\infty$-category if and only if $\operatorname{Shv}(X)$ is locally of constant shape in the sense of \cite[Definition A.1.5]{LurieHA} (see also Proposition A.1.8). In this case, the exit path $\infty$-category is the $\infty$-groupoid given by the shape (\cite[Theorem A.1.15, Remark A.1.10]{LurieHA} and the straightening-unstraightening equivalence). This defines the \textit{homotopy type} (or \textit{fundamental $\infty$-groupoid}), $\Pi(X)=\Pi(X,\ast)$, of an étale $\infty$-stack $X$ whose $\infty$-category of sheaves is locally of constant shape. By \Cref{exit path category in terms of an atlas}, it can be determined as a colimit using a suitable atlas:
\begin{align*}
\Pi(X)\xleftarrow{\ \sim  \ }\varinjlim_{[n]\in \Delta^{op}} \Pi(U_n),
\end{align*}
where $U_n=\check{C}(f)_n$ is the $n$'th level of the \v{C}ech nerve of an étale atlas $f\colon U\rightarrow X$ satisfying that each $\operatorname{Shv}(U_n)$ is locally of constant shape. For a suitable choice of atlas, this recovers the definition of the homotopy type of a ($1$-truncated) stack as the (fat) geometric realisation of the \v{C}ech nerve given in e.g.~\cite{Ebert,Noohi,Moerdijk}.

Combining the above observation that the exit path $\infty$-category of a trivially stratified $\infty$-stack $(X,\ast)$ is an $\infty$-groupoid with \Cref{permanence properties} (1), the following lemma is immediate.

\begin{lemma}
Let $(X,P)$ be a stratified étale $\infty$-stack admitting an exit path $\infty$-category. The functor $\Pi(X,P)\rightarrow P$ is conservative.
\end{lemma}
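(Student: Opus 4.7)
The plan is to identify each fiber of $\Pi(X,P)\rightarrow P$ with the homotopy type of the corresponding stratum, and then observe that homotopy types are $\infty$-groupoids, from which conservativity follows formally.

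First, I would fix a point $p\in P$ and note that $\{p\}\subseteq P$ is a locally closed subset. Applying \Cref{permanence properties} (1) with $Q=\{p\}$ yields that the substack $X_p = X\mathop{\times}_P \{p\}$, trivially stratified over $\{p\}=\ast$, admits an exit path $\infty$-category together with an equivalence
\[
\Pi(X_p,\ast)\xrightarrow{\ \sim \ }\Pi(X,P)\mathop{\times}_P\{p\}.
\]
The right-hand side is by definition the fiber of $\Pi(X,P)\rightarrow P$ over $p$. By the discussion opening this subsection, the exit path $\infty$-category of a stack trivially stratified over $\ast$ is an $\infty$-groupoid (the homotopy type), so this fiber is an $\infty$-groupoid.

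Conservativity then follows immediately: if $\alpha\colon x\rightarrow y$ is a morphism in $\Pi(X,P)$ whose image in $P$ is an identity at some $p\in P$, then both $x$ and $y$ lie over $p$ and $\alpha$ is a morphism in the fiber over $p$; being a morphism in an $\infty$-groupoid, it is an equivalence there, and hence also in $\Pi(X,P)$. There is no real obstacle here: once \Cref{permanence properties} is in place, the lemma is a direct combination of it with the $\infty$-groupoid-valued nature of exit path categories over the point, and the only modest subtlety is the observation that equivalences in a subcategory of the form $\mathcal{C}\times_P\{p\}$ are equivalences in $\mathcal{C}$, which holds because equivalences are characterised by the existence of an inverse up to higher coherent homotopy.
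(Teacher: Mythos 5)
Your proof is correct and follows exactly the route the paper intends: the paper derives the lemma as an immediate consequence of \Cref{permanence properties}~(1) applied to the locally closed subset $\{p\}\subseteq P$ together with the observation that the exit path $\infty$-category of a trivially stratified stack is an $\infty$-groupoid. You have simply spelled out the details the paper leaves implicit, including the (correct) final point that an equivalence in the fiber remains an equivalence in $\Pi(X,P)$.
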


This observation allows us to prove the following result, which can be phrased as follows: the exit path $\infty$-category sends refinements to localisations (see also \cite[Theorem 3.3.12]{AyalaFrancisRozenblyum}).

\begin{proposition}
Let $\sigma\colon P\rightarrow Q$ be a map of posets satisfying the ascending chain condition. Let $(X,P)$ be a stratified étale $\infty$-stack and let $(X,Q)$ denote the same stack stratified over $Q$ via the map $\sigma$. If both $(X,P)$ and $(X,Q)$ admit exit path $\infty$-categories, then the map $(X,P)\rightarrow (X,Q)$ given by the identity on $X$ preserves exit path $\infty$-categories and the corresponding map $\Pi(X,P)\rightarrow \Pi(X,Q)$ is a localisation.
\end{proposition}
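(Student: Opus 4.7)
The plan is to reduce both claims to the single observation that the pullback on constructible sheaves is just the inclusion of a full subcategory.

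First I would verify that a $Q$-constructible sheaf is automatically $P$-constructible. Writing $s\colon X\to P$ for the given stratification, the composite $t=\sigma\circ s\colon X\to Q$ has strata that decompose as $X_q=\bigsqcup_{p\in\sigma^{-1}(q)}X_p$. Since local constancy is stable under pullback, restricting a locally constant sheaf on $X_q$ to each substack $X_p\subseteq X_q$ produces a locally constant sheaf on $X_p$. Hence $\operatorname{Shv}^{\operatorname{cbl}}_Q(X)\subseteq\operatorname{Shv}^{\operatorname{cbl}}_P(X)$ as a full subcategory, and the pullback functor along the identity $(X,P)\to(X,Q)$ is exactly this inclusion.

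By hypothesis, both $(X,P)$ and $(X,Q)$ admit exit path $\infty$-categories, so by condition (a) of \Cref{definition exit path category}(1) both of $\operatorname{Shv}^{\operatorname{cbl}}_P(X)$ and $\operatorname{Shv}^{\operatorname{cbl}}_Q(X)$ are closed under arbitrary limits and colimits in $\operatorname{Shv}(X)$. It follows that the full inclusion between them preserves all limits and colimits, which is exactly the definition that the identity respects exit path $\infty$-categories. Let $F\colon\Pi(X,P)\to\Pi(X,Q)$ denote the induced functor. Via the exodromy equivalences supplied by \Cref{presheaf categories and atomic objects}, the restriction functor $F^*\colon\operatorname{Fun}(\Pi(X,Q),\mathcal{S})\to\operatorname{Fun}(\Pi(X,P),\mathcal{S})$ is identified with the inclusion $\operatorname{Shv}^{\operatorname{cbl}}_Q(X)\hookrightarrow\operatorname{Shv}^{\operatorname{cbl}}_P(X)$, and is therefore fully faithful.

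To conclude I would invoke the standard characterisation of Dwyer--Kan localisations: a functor $F\colon\mathcal{C}\to\mathcal{D}$ between small $\infty$-categories exhibits $\mathcal{D}$ as a localisation of $\mathcal{C}$ (at the collection of morphisms inverted by $F_!$) if and only if the restriction $F^*\colon\operatorname{Fun}(\mathcal{D},\mathcal{S})\to\operatorname{Fun}(\mathcal{C},\mathcal{S})$ is fully faithful, equivalently the left Kan extension $F_!\colon\mathcal{P}(\mathcal{C})\to\mathcal{P}(\mathcal{D})$ is a Bousfield localisation of presentable $\infty$-categories. Since in our situation $F^*$ is the fully faithful inclusion of constructible subcategories, $F$ is a localisation, as claimed.

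I don't expect a genuine obstacle: modulo the exodromy correspondence (already established) and the standard characterisation of $\infty$-categorical localisations, the argument is formal. The one routine step worth spelling out in a full proof is the identification, under exodromy, of the sheaf-level pullback with restriction of presheaves along $F$; this is forced by the uniqueness statement after \Cref{definition exit path category}, which says that the pullback on constructible sheaves always arises by precomposition with a uniquely determined functor between the exit path $\infty$-categories.
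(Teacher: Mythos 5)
The first half of your argument --- that pullback along the identity is the full inclusion $\operatorname{Shv}^{\operatorname{cbl}}_Q(X)\subseteq\operatorname{Shv}^{\operatorname{cbl}}_P(X)$, and that condition (a) of \Cref{definition exit path category}(1) then forces this inclusion to preserve all limits and colimits --- is correct and is exactly how the paper handles the first claim. The gap is in the final step. The ``standard characterisation'' you invoke is false as stated: fully faithfulness of $F^*\colon\operatorname{Fun}(\mathcal{D},\mathcal{S})\to\operatorname{Fun}(\mathcal{C},\mathcal{S})$ (equivalently, $F_!$ being a Bousfield localisation of presheaf categories) does \emph{not} imply that $F$ is a Dwyer--Kan localisation. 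The standard counterexample is the inclusion of a small $\infty$-category into its idempotent completion: this induces an equivalence of $\mathcal{S}$-valued functor categories, hence certainly a fully faithful $F^*$, but it inverts only equivalences and is not essentially surjective, so it cannot be a localisation. What fully faithfulness of $F^*$ actually buys you is that $\Pi(X,Q)$ is the \emph{idempotent completion} of the localisation of $\Pi(X,P)$ at the class of morphisms inverted by $F$; you still owe an argument that this localisation is already idempotent complete, i.e.\ that no new retracts appear.

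This is precisely where the hypotheses your proof never touches --- that $P$ satisfies the ascending chain condition, and that $\Pi(X,P)\rightarrow P$ is conservative (the lemma immediately preceding this proposition in the paper) --- have to enter; their absence from your argument is the tell-tale sign of the gap. The paper closes it by observing that the proof of \cite[Theorem 3.3.12]{AyalaFrancisRozenblyum} goes through in this setting, via Lemma 3.3.14 of loc.\ cit., which uses exactly a conservative functor to a poset satisfying the ascending chain condition to rule out the idempotent-completion discrepancy. So your reduction to fully faithfulness of $F^*$ is a correct and necessary first step, but on its own it proves a strictly weaker statement, and the remaining step is where the real content of the hypotheses lies.
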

\begin{proof}
The fact that $(X,P)\rightarrow (X,Q)$ preserves exit path $\infty$-categories follows directly from \Cref{definition exit path category} (1) (a). Now, we claim that the proof of \cite[Theorem 3.3.12]{AyalaFrancisRozenblyum} goes through to prove that the map of exit path $\infty$-categories is a localisation. The proof uses the subsequent Lemma 3.3.14 loc.cit. exploiting the fact that $P$ satisfies the ascending chain condition and that the map $\Pi(X,P)\rightarrow P$ is conservative.
\end{proof}

In particular we have the following corollary, which should be compared with \cite[Corollary A.9.4]{LurieHA}.

\begin{corollary}
Let $(X,P)$ be stratified étale $\infty$-stack. Suppose that both $(X,P)$ and $(X,\ast)$ admit exit path $\infty$-categories. Then the functor
\begin{align*}
\Pi(X,P)\rightarrow \Pi(X)
\end{align*}
is a weak homotopy equivalence.
\end{corollary}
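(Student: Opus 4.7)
The plan is to invoke the preceding proposition directly. I will apply it with $Q=\ast$ the terminal poset and $\sigma\colon P\to \ast$ the unique map. The terminal poset trivially satisfies the ascending chain condition, and both $(X,P)$ and $(X,\ast)$ admit exit path $\infty$-categories by hypothesis, so the proposition yields that the functor $\Pi(X,P)\to \Pi(X,\ast)=\Pi(X)$ is a localisation of $\infty$-categories.

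It then remains to observe that any localisation of $\infty$-categories induces a weak homotopy equivalence. This is a formal property of localisations: the groupoid completion $|\mathcal{C}|$ of an $\infty$-category $\mathcal{C}$ is obtained by inverting all morphisms, an operation which factors through any intermediate localisation $\mathcal{C}\to \mathcal{C}[W^{-1}]$. In our situation, $\Pi(X)$ is already an $\infty$-groupoid --- it is defined as the shape of the $\infty$-topos $\operatorname{Shv}(X)$, as recalled at the beginning of \S 3.3 --- so $|\Pi(X)|\simeq \Pi(X)$. Combining these two observations gives $|\Pi(X,P)|\simeq |\Pi(X)|\simeq \Pi(X)$, which is precisely the assertion that $\Pi(X,P)\to \Pi(X)$ is a weak homotopy equivalence.

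The argument is essentially formal; once the previous proposition has been established, the only additional ingredient is the standard fact that localisations induce equivalences on groupoid completions. I do not foresee any significant obstacle, and the proof should be a brief two-line citation rather than a substantial calculation.
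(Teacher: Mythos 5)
Your proof is correct and follows exactly the route the paper intends: the corollary is stated as an immediate consequence of the preceding proposition applied with $Q=\ast$, combined with the standard fact that a localisation of $\infty$-categories induces an equivalence on groupoid completions (since inverting all morphisms factors through inverting any subclass). Your additional remark that $\Pi(X)=\Pi(X,\ast)$ is already an $\infty$-groupoid correctly identifies why the conclusion can be read as a weak homotopy equivalence onto the shape.
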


\subsection{The constructible derived category}

We remark here that exodromy for constructible sheaves with values in $\mathcal{S}$ automatically extends to sheaves with values in an arbitrary compactly generated $\infty$-category.

\begin{proposition}
Let $(X,P)$ be a stratified étale $\infty$-stack admitting an exit path $\infty$-category, and let $\mathcal{C}$ be a compactly generated $\infty$-category. Then there is an equivalence
\begin{align*}
\operatorname{Fun}(\Pi(X, P), \mathcal{C})\xrightarrow{\ \sim \ }\operatorname{Shv}^{\operatorname{cbl}}_P(X;\mathcal{C}).
\end{align*}
\end{proposition}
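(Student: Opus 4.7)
The plan is to bootstrap the proposition from the $\mathcal{S}$-valued exodromy equivalence
\[
\operatorname{Fun}(\Pi(X,P),\mathcal{S})\xrightarrow{\ \sim\ }\operatorname{Shv}^{\operatorname{cbl}}_P(X),
\]
which is built into \Cref{definition exit path category} and \Cref{presheaf categories and atomic objects}, by threading it through the compact-object description of $\mathcal{C}$-valued (constructible) sheaves recorded earlier in the paper. Concretely, I would chain together a sequence of natural equivalences
\[
\operatorname{Shv}^{\operatorname{cbl}}_P(X;\mathcal{C})\;\simeq\;\operatorname{Fun}^{\operatorname{lex}}\!\big(\mathcal{C}_0^{\operatorname{op}},\operatorname{Shv}^{\operatorname{cbl}}_P(X;\mathcal{S})\big)\;\simeq\;\operatorname{Fun}^{\operatorname{lex}}\!\big(\mathcal{C}_0^{\operatorname{op}},\operatorname{Fun}(\Pi(X,P),\mathcal{S})\big)
\]
\[
\simeq\;\operatorname{Fun}\!\big(\Pi(X,P),\operatorname{Fun}^{\operatorname{lex}}(\mathcal{C}_0^{\operatorname{op}},\mathcal{S})\big)\;\simeq\;\operatorname{Fun}(\Pi(X,P),\mathcal{C}),
\]
where $\mathcal{C}_0\subseteq\mathcal{C}$ denotes the subcategory of compact objects.

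First, I would invoke the lemma stated just before \S 3.2 (identifying constructible sheaves with coefficients in a compactly generated $\infty$-category as finite-limit-preserving functors from $\mathcal{C}_0^{\operatorname{op}}$ into the $\mathcal{S}$-valued constructible sheaves) to produce the first equivalence. Second, I would apply the $\mathcal{S}$-valued exodromy equivalence pointwise inside the $\operatorname{Fun}^{\operatorname{lex}}(\mathcal{C}_0^{\operatorname{op}},-)$; since this equivalence commutes with limits, a functor $\mathcal{C}_0^{\operatorname{op}}\to\operatorname{Shv}^{\operatorname{cbl}}_P(X)$ preserves finite limits if and only if the transported functor $\mathcal{C}_0^{\operatorname{op}}\to\operatorname{Fun}(\Pi(X,P),\mathcal{S})$ does. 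Third, I would swap the order of the two $\operatorname{Fun}$s: since limits in functor categories are computed pointwise, a functor $\mathcal{C}_0^{\operatorname{op}}\to\operatorname{Fun}(\Pi(X,P),\mathcal{S})$ preserves finite limits if and only if, for each object $\gamma\in\Pi(X,P)$, the composite $\mathcal{C}_0^{\operatorname{op}}\to\operatorname{Fun}(\Pi(X,P),\mathcal{S})\xrightarrow{\operatorname{ev}_\gamma}\mathcal{S}$ preserves finite limits. Finally, I would recognise $\operatorname{Fun}^{\operatorname{lex}}(\mathcal{C}_0^{\operatorname{op}},\mathcal{S})\simeq\operatorname{Ind}(\mathcal{C}_0)\simeq\mathcal{C}$, which is the defining property of a compactly generated $\infty$-category.

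The main obstacle I expect is simply keeping the chain of equivalences natural and verifying that the outcome matches the anticipated map $\operatorname{Fun}(\Pi(X,P),\mathcal{C})\to\operatorname{Shv}^{\operatorname{cbl}}_P(X;\mathcal{C})$, rather than any one step being hard: the lex/swap identification, while completely formal, is the place where one has to be careful, because one has to commute a $\operatorname{lex}$-constraint past a $\operatorname{Fun}$ and this relies crucially on limits in $\operatorname{Fun}(\Pi(X,P),\mathcal{S})$ being computed pointwise. If one prefers, the entire argument can be repackaged via the tensor product in $\operatorname{Pr}^L$: one has $\operatorname{Shv}^{\operatorname{cbl}}_P(X;\mathcal{C})\simeq\operatorname{Shv}^{\operatorname{cbl}}_P(X)\otimes\mathcal{C}\simeq\mathcal{P}(\Pi(X,P)^{\operatorname{op}})\otimes\mathcal{C}\simeq\operatorname{Fun}(\Pi(X,P),\mathcal{C})$ by the universal property of presheaf categories, yielding the same result with essentially no work beyond the $\mathcal{S}$-valued case.
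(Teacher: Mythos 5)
Your proof is correct and is essentially the argument the paper relies on: the paper's own proof is a one-line deferral to \cite[Appendix B]{OrsnesJansen} (adapted by replacing the site $\mathscr{U}(-)$ by $\mathcal{LH}(-)$), and your chain of equivalences --- the lemma preceding \S 3.2, the $\mathcal{S}$-valued exodromy equivalence, the pointwise lex/$\operatorname{Fun}$ swap, and $\operatorname{Fun}^{\operatorname{lex}}(\mathcal{C}_0^{\operatorname{op}},\mathcal{S})\simeq\operatorname{Ind}(\mathcal{C}_0)\simeq\mathcal{C}$ --- is precisely that argument written out. The caveat you flag yourself (naturality and matching the composite with the anticipated comparison functor) is harmless here, since the statement only asserts the existence of an equivalence.
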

\begin{proof}
The proof given in \cite[Appendix B]{OrsnesJansen} applies to stratified topological spaces, but it goes through for stratified $\infty$-stacks by replacing the site $\mathscr{U}(-)$ of open sets by the site $\mathcal{LH}(-)$ of local homeomorphisms.
\end{proof}

This generalisation implies that we can access the constructible derived category of a stratified stack admitting an exit path $\infty$-category by considering sheaves valued in $\mathscr{D}(R)\simeq \operatorname{LMod}_R$, the derived $\infty$-category of left $R$-modules for a given associative ring $R$. See also the discussion in \cite[\S 2.4]{OrsnesJansen}. We refer to \cite[\S 1.3]{LurieHA} for details on derived $\infty$-categories.

Let $(X,P)$ be a stratified étale $\infty$-stack and let $\operatorname{Shv}_1(X;R):=\operatorname{Shv}_1(\mathcal{LH}(X); \operatorname{LMod}^1_R)$ denote the $1$-category of sheaves of left $R$-modules on $X$, i.e.~sheaves on the site $\mathcal{LH}(X)$ with values in the $1$-category of left $R$-modules. Consider the canonical (and fully faithful) functor
\begin{align*}
\mathscr{D}(\operatorname{Shv}_1(X;R))\longrightarrow \operatorname{Shv}(X;\operatorname{LMod}_R).
\end{align*}
The essential image of this functor is the subcategory of hypercomplete sheaves by which we mean the subcategory given by tensoring $\operatorname{LMod}_R$ with the subcategory $\operatorname{Shv}^{\operatorname{hyp}}(X)$ of hypercomplete objects in the $\infty$-topos $\operatorname{Shv}(X)$ (see e.g.~\cite[\S 6.5.2]{LurieHTT} and \cite{HainePortaTeyssier23}); equivalently, it corresponds to the subcategory $\operatorname{Fun}^{\operatorname{lex}}(\operatorname{Perf}(R), \operatorname{Shv}^{\operatorname{hyp}}(X))$ under the natural equivalence of \Cref{sheaves with general coefficients from space valued sheaves}. If $(X,P)$ admits an exit path $\infty$-category, then the constructible sheaves on $X$ are hypercomplete, as presheaf $\infty$-topoi have enough points (\cite[Remark 6.5.4.7]{LurieHTT}). Moreover, under the above functor, the subcategory of constructible sheaves exactly corresponds to the full subcategory
\begin{align*}
\mathscr{D}_P(\operatorname{Shv}_1(X;R))\subseteq\mathscr{D}(\operatorname{Shv}_1(X;R))
\end{align*}
spanned by the complexes whose homology sheaves are constructible with respect to the given stratification over $P$. The full subcategory of constructible compact-valued sheaves (that is, sheaves whose stalk complexes are perfect) is denoted by
\begin{align*}
\operatorname{Shv}^{\operatorname{cbl,cpt}}_P(X;\operatorname{LMod}_R)\subseteq \operatorname{Shv}^{\operatorname{cbl}}_P(X;\operatorname{LMod}_R)
\end{align*}
and corresponds, under the above functor, to the full subcategory
\begin{align*}
\mathscr{D}^b_P(\operatorname{Shv}_1(X;R))\subseteq\mathscr{D}_P(\operatorname{Shv}_1(X;R))
\end{align*}
spanned by the complexes whose homology sheaves are constructible \textit{and} whose stalk complexes are perfect, i.e.~the bounded constructible derived $\infty$-category (with respect to the fixed stratification over $P$).

We have the following result identifying the (bounded) constructible derived category as a derived functor category when the exit path $\infty$-category is just a $1$-category (see \cite[Theorem 2.28]{OrsnesJansen} for a proof).

\begin{proposition}\label{equivalence of derived infinity-categories}
Let $(X,P)$ be a stratified étale $\infty$-stack admitting an exit path $\infty$-category, and let $R$ be an associative ring. If $\Pi(X, P)$ is a $1$-category, then there is an equivalence
\begin{align*}
\operatorname{Shv}^{\operatorname{cbl}}_P(X;\operatorname{LMod}_R)\xrightarrow{\sim}\mathscr{D}(\operatorname{Fun}(\Pi(X, P),\operatorname{LMod}_R^1)),
\end{align*}
which restricts to an equivalence
\begin{align*}
\operatorname{Shv}^{\operatorname{cbl,cpt}}_P(X;\operatorname{LMod}_R)\xrightarrow{\sim} \mathscr{D}^{\operatorname{cpt}}(\operatorname{Fun}(\Pi(X,P),\operatorname{LMod}_R^1)),
\end{align*}
where $\mathscr{D}^{\operatorname{cpt}}(\operatorname{Fun}(\Pi(X, P),\operatorname{LMod}_R^1))$ is the full subcategory spanned by the complexes of functors $F_\bullet$ such that $F_\bullet(x)$ is a perfect complex for all objects $x$ of $\Pi(X,P)$.
\end{proposition}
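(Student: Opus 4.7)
The strategy is to reduce the claim to a purely $\infty$-categorical statement about functor categories via exodromy, and then invoke the classical fact that taking a derived $\infty$-category commutes with functor categories whose source is a $1$-category.

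First, I would apply the preceding proposition with $\mathcal{C} = \operatorname{LMod}_R$, which is compactly generated with compact objects $\operatorname{Perf}(R)$, to obtain an equivalence
\begin{align*}
\operatorname{Shv}^{\operatorname{cbl}}_P(X; \operatorname{LMod}_R) \xrightarrow{\ \sim\ } \operatorname{Fun}(\Pi(X,P), \operatorname{LMod}_R).
\end{align*}
This step is where all the geometry of the stratified $\infty$-stack enters; after it, the argument is formal. In particular, the proof will run in parallel to that of \cite[Theorem 2.28]{OrsnesJansen}, replacing the site $\mathscr{U}(-)$ of open subsets by the étale site $\mathcal{LH}(-)$ wherever it appears.

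Next, I would identify the right-hand side with a derived $\infty$-category. Using the tautological equivalence $\operatorname{LMod}_R \simeq \mathscr{D}(\operatorname{LMod}^1_R)$ and the hypothesis that $\Pi(X,P)$ is a $1$-category, one checks that the natural comparison functor
\begin{align*}
\mathscr{D}\bigl(\operatorname{Fun}(\Pi(X,P), \operatorname{LMod}^1_R)\bigr) \longrightarrow \operatorname{Fun}\bigl(\Pi(X,P), \mathscr{D}(\operatorname{LMod}^1_R)\bigr)
\end{align*}
is an equivalence. The point is that $\operatorname{Fun}(\Pi(X,P), \operatorname{LMod}^1_R)$ is again Grothendieck abelian, and when the source of the functors is a $1$-category, its derived $\infty$-category satisfies the universal property of the right-hand side against stable presentable targets. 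This is precisely the content of the argument in the topological setting, and nothing here depends on $X$ being a space rather than a stack.

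Finally, I would track the compact-valued subcategory. Under the exodromy equivalence, the stalk of a sheaf at an object $x \in \Pi(X,P)$ corresponds to evaluation at $x$; perfectness is preserved under the comparison of the previous step, since evaluation at a point is $t$-exact and detects perfect complexes. Hence $\operatorname{Shv}^{\operatorname{cbl,cpt}}_P(X;\operatorname{LMod}_R)$ matches the full subcategory $\mathscr{D}^{\operatorname{cpt}}\bigl(\operatorname{Fun}(\Pi(X,P),\operatorname{LMod}_R^1)\bigr)$ of pointwise-perfect complexes.

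\textbf{Main obstacle.} The only nontrivial step is the equivalence $\mathscr{D}(\operatorname{Fun}(C, A)) \simeq \operatorname{Fun}(C, \mathscr{D}(A))$ for $C$ a $1$-category and $A$ Grothendieck abelian; the hypothesis that $\Pi(X,P)$ is a $1$-category is essential here, since for a genuine $\infty$-category one cannot expect the target to be presented as the derived $\infty$-category of an abelian functor category. Since this formal identification has already been treated in \cite[Theorem 2.28]{OrsnesJansen} using only the exodromy equivalence and standard properties of $\mathscr{D}(-)$, the proof in the stack setting reduces to citing that argument once Step~1 is in place.
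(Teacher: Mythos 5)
Your proposal is correct and follows essentially the same route as the paper, which simply defers to the proof of \cite[Theorem 2.28]{OrsnesJansen}: the geometric input is the exodromy equivalence with coefficients in $\operatorname{LMod}_R$ (the preceding proposition), after which everything reduces to the formal identification $\mathscr{D}(\operatorname{Fun}(C,\operatorname{LMod}^1_R))\simeq\operatorname{Fun}(C,\mathscr{D}(\operatorname{LMod}^1_R))$ for $C$ a $1$-category, together with matching pointwise perfectness with perfectness of stalks. Your outline correctly isolates both the key step and the role of the $1$-category hypothesis.
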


Taking homotopy categories, we get the following corollary, where $D_P(-)$ denotes the full subcategory of the $1$-categorical derived category of sheaves of $R$-modules spanned by the complexes with $P$-constructible cohomology sheaves and $D^b_P(-)$ the further full subcategory spanned by those complexes all of whose stalk complexes are perfect. In other words, $D^b_P(-)$ is the ``classical'' constructible derived ($1$-)category (beware, however, that we are still working with respect to a fixed stratification).

\begin{corollary}
In the situation of \Cref{equivalence of derived infinity-categories} there is an equivalence of $1$-categories
\begin{align*}
D_P(\operatorname{Shv}_1(X;R))\simeq D(\operatorname{Fun}(\Pi(X,P),\operatorname{LMod}_R^1))
\end{align*}
which restricts to an equivalence
\begin{align*}
D^b_P(\operatorname{Shv}_1(X;R))\simeq D^{\operatorname{cpt}}(\operatorname{Fun}(\Pi(X,P),\operatorname{LMod}_R^1))
\end{align*}
where $D^{\operatorname{cpt}}(\operatorname{Fun}(\Pi(X,P),\operatorname{LMod}_R^1))$ is the full subcategory spanned by the complexes of functors $F_\bullet$ such that $F_\bullet(x)$ is a perfect complex for all objects $x\in \Pi(X,P)$.
\end{corollary}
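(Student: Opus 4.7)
The plan is to obtain this corollary by simply passing to homotopy $1$-categories on both sides of the two equivalences of $\infty$-categories furnished by \Cref{equivalence of derived infinity-categories}, using the compatibility between $\infty$-categorical and classical derived categories that was already set up in the discussion preceding that proposition.

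More concretely, I would proceed as follows. First I would recall that for any Grothendieck abelian $1$-category $\mathcal{A}$, the homotopy category of the stable $\infty$-categorical derived category $\mathscr{D}(\mathcal{A})$ agrees with the classical derived $1$-category $D(\mathcal{A})$ in the sense of Verdier (\cite[Remark 1.3.5.9]{LurieHA}). Applying this to $\mathcal{A}=\operatorname{Fun}(\Pi(X,P),\operatorname{LMod}_R^1)$, which is Grothendieck abelian since $\Pi(X,P)$ is a $1$-category, gives
\[
h\mathscr{D}(\operatorname{Fun}(\Pi(X,P),\operatorname{LMod}_R^1))\simeq D(\operatorname{Fun}(\Pi(X,P),\operatorname{LMod}_R^1)).
\]

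Second, I would identify the homotopy category of $\operatorname{Shv}^{\operatorname{cbl}}_P(X;\operatorname{LMod}_R)$ with $D_P(\operatorname{Shv}_1(X;R))$. This is exactly the content of the discussion right before \Cref{equivalence of derived infinity-categories}: the fully faithful functor $\mathscr{D}(\operatorname{Shv}_1(X;R))\to \operatorname{Shv}(X;\operatorname{LMod}_R)$ has essential image the hypercomplete sheaves, and since $(X,P)$ admits an exit path $\infty$-category the constructible sheaves are automatically hypercomplete (presheaf $\infty$-topoi have enough points). Thus this functor restricts to an equivalence $\mathscr{D}_P(\operatorname{Shv}_1(X;R))\xrightarrow{\sim}\operatorname{Shv}^{\operatorname{cbl}}_P(X;\operatorname{LMod}_R)$, and passing to homotopy categories yields $h\operatorname{Shv}^{\operatorname{cbl}}_P(X;\operatorname{LMod}_R)\simeq D_P(\operatorname{Shv}_1(X;R))$. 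Combining with the previous step and the first equivalence of \Cref{equivalence of derived infinity-categories} gives the first equivalence of the corollary.

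Third, for the bounded/compact-valued version, I would observe that both restrictions are cut out by exactly the same pointwise condition on stalks, namely perfectness, and this condition is preserved and reflected by the equivalence of $\infty$-categories constructed above (compact-valuedness is a property of objects, so it transfers to homotopy categories unchanged). Applying $h(-)$ to the second equivalence of \Cref{equivalence of derived infinity-categories} and restricting attention to these compact-valued subcategories therefore yields $D^b_P(\operatorname{Shv}_1(X;R))\simeq D^{\operatorname{cpt}}(\operatorname{Fun}(\Pi(X,P),\operatorname{LMod}_R^1))$.

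I do not expect any real obstacle: all the analytic content sits in \Cref{equivalence of derived infinity-categories} and in the identification of the hypercomplete essential image, both of which are already in place. The only point requiring a tiny amount of care is making sure one uses that $\Pi(X,P)$ is a $1$-category (so that $\operatorname{Fun}(\Pi(X,P),\operatorname{LMod}_R^1)$ is again a Grothendieck abelian $1$-category and Lurie's identification of the homotopy category of its derived $\infty$-category applies); but this is exactly the standing hypothesis of the preceding proposition.
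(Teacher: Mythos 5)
Your proposal is correct and follows essentially the same route as the paper, which obtains the corollary precisely by passing to homotopy categories in Proposition \ref{equivalence of derived infinity-categories}, using the identification $h\mathscr{D}(\mathcal{A})\simeq D(\mathcal{A})$ and the preceding identification of $\operatorname{Shv}^{\operatorname{cbl}}_P(X;\operatorname{LMod}_R)$ with $\mathscr{D}_P(\operatorname{Shv}_1(X;R))$ via hypercompleteness. The only detail you spell out beyond the paper is the (correct) remark that the perfect-stalk condition is a property of objects and hence passes unchanged to homotopy categories.
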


\section{Change of base category}\label{change of base}

Given that many stacks of interest arise in an algebrogeometric setting, we now provide an explicit description of the \textit{underlying topological stack} $X^{\operatorname{top}}$ of an algebraic stack $X$. This is based on an analogous description of the analytification $X^{\operatorname{an}}$ made by Hinich--Vaintrob in \cite[Proposition 4.2.2]{HinichVaintrob}. General change of geometric context is treated more formally in for example \cite[\S 2.5]{PortaYu16}. Note that Nocera treats the formalism of stratified algebraic stacks and \textit{stratified} analytification in \cite[Appendix B]{Nocera}.

By an \textit{algebraic $\infty$-stack}, we mean a sheaf $X\in \operatorname{Shv}(\operatorname{Aff}_{\C}^{ft})$, where $\operatorname{Aff}_{\C}^{ft}$ denotes the category of affine schemes of finite type over $\operatorname{Spec}(\C)$ equipped with the étale topology. A common change of base procedure is given by transferring the \v{C}ech nerve of an atlas into a different base category. In the case at hand, we apply the functor
\begin{align*}
{\scriptstyle(-)}^{\operatorname{top}}\colon \operatorname{Aff}_{\C}^{ft}\rightarrow \mathfrak{T},\quad U\rightarrow U^{\operatorname{top}},
\end{align*}
sending an affine scheme to the topological space underlying its analytification (see \cite[Exposé XII]{SGA1}). Note that the analytification of an affine scheme of finite type over $\C$ is indeed a second countable locally compact Hausdorff space as it embeds into $\C^n$ with the usual Euclidean topology.

The advantage of the definition that we will give below (\Cref{underlying topological stack}) is that it does not require a choice of atlas. We start on presheaf categories and begin with a simple preliminary observation.

\begin{remark}\label{extending to schemes of non-finite type}
A presheaf $X\in \mathcal{P}(\operatorname{Aff}_{\C}^{ft})$ extends canonically to a presheaf on the category $\operatorname{Aff}_{\C}$ of affine schemes over $\C$ (not necessarily of finite type) via the left adjoint to restriction along the inclusion $\iota\colon\operatorname{Aff}_{\C}^{ft}\hookrightarrow \operatorname{Aff}_{\C}$:
\begin{align*}
\overline{\scriptstyle(-)}\colon \mathcal{P}(\operatorname{Aff}_{\C}^{ft})\rightarrow \mathcal{P}(\operatorname{Aff}_{\C})
\end{align*}
This is explicitly given by left Kan extension along $\iota^{op}$ and it restricts to the inclusion $\iota$ along the Yoneda embeddings (\cite[Corollary F]{HaugsengHebestreitLinskensNuiten}). In particular, for any $U\in \operatorname{Aff}_{\C}^{ft}$, the presheaf $\overline{U}$ is just the representable $U$ but now over the category $\operatorname{Aff}_{\C}$.
\end{remark}

We will need the following lemma.

\begin{lemma}\label{canonical extension is left exact}
The functor $\overline{\scriptstyle(-)}$ is left exact.
\end{lemma}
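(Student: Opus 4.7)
By the pointwise formula for left Kan extensions, for any $A\in \operatorname{Aff}_{\C}$ and any $X\in \mathcal{P}(\operatorname{Aff}_{\C}^{ft})$ we have
\[
\overline{X}(A) \;\simeq\; \varinjlim_{(A/\iota)^{\operatorname{op}}} X(U),
\]
where $A/\iota$ is the coslice category whose objects are pairs $(U, A\to U)$ with $U\in \operatorname{Aff}_{\C}^{ft}$, and a morphism $(U,\alpha)\to (U',\alpha')$ is a map $f\colon U\to U'$ in $\operatorname{Aff}_{\C}^{ft}$ with $f\alpha=\alpha'$. Since limits in presheaf categories are computed pointwise and each evaluation functor $X\mapsto X(U)$ preserves all limits, to show that $\overline{(-)}$ is left exact it suffices to check that $(A/\iota)^{\operatorname{op}}$ is filtered for every $A\in\operatorname{Aff}_{\C}$; the conclusion then follows because filtered colimits in $\mathcal{S}$ commute with finite limits.

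My plan is to prove the stronger assertion that $A/\iota$ admits all finite limits, whence it is cofiltered. The key input is that $\operatorname{Aff}_{\C}^{ft}$ itself has finite limits and that the inclusion $\iota\colon \operatorname{Aff}_{\C}^{ft}\hookrightarrow\operatorname{Aff}_{\C}$ preserves them: fibre products correspond to tensor products of coordinate rings, equalisers to quotients by finitely generated ideals (using that finitely generated $\C$-algebras are Noetherian, so the relevant ideals are automatically finitely generated), and the terminal object is $\operatorname{Spec}(\C)$. Finite limits in the coslice $A/\iota$ are then inherited from $\operatorname{Aff}_{\C}^{ft}$: the limit of a diagram $(U_i,\alpha_i)_{i\in I}$ is $(\varprojlim_{i} U_i,\, A\to \iota(\varprojlim_i U_i))$, the structure map being induced by the $\alpha_i$ via the universal property of $\iota(\varprojlim_i U_i)\simeq \varprojlim_i \iota(U_i)$ in $\operatorname{Aff}_{\C}$. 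In particular, $A/\iota$ is non-empty (it has a terminal object $(\operatorname{Spec}(\C), A\to \operatorname{Spec}(\C))$ given by the structure morphism of $A$), so it is genuinely cofiltered.

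I do not anticipate any serious obstacle. The main care required is with the variance --- we need the opposite of the coslice to be filtered, not the coslice itself --- and with verifying that $\operatorname{Aff}_{\C}^{ft}$ really is closed under the relevant finite limits in $\operatorname{Aff}_{\C}$, both of which are routine.
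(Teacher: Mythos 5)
Your proof is correct and follows essentially the same route as the paper: the pointwise formula for the left Kan extension together with the commutation of filtered colimits and finite limits in $\mathcal{S}$, the only input being that the indexing category $(A/\iota)^{\operatorname{op}}$ is filtered. The paper deduces this filteredness from the identification $\operatorname{Aff}_{\C}^{\operatorname{op}}\simeq \operatorname{Ind}((\operatorname{Aff}_{\C}^{ft})^{\operatorname{op}})$, whereas you verify it directly by exhibiting finite limits in the coslice (note that your appeal to Noetherianity is unnecessary: a quotient of a finite type $\C$-algebra by \emph{any} ideal is again of finite type, so the equaliser case is automatic).
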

\begin{proof}
This follows by the pointwise formula for left Kan extension, the fact that finite limits and filtered colimits commute and finally the observation that $\operatorname{Aff}_{\C}^{\operatorname{op}}$ is the $\operatorname{Ind}$-completion of $(\operatorname{Aff}_{\C}^{ft})^{\operatorname{op}}$, $\operatorname{Aff}_{\C}^{\operatorname{op}}\simeq \operatorname{Ind}((\operatorname{Aff}_{\C}^{ft})^{\operatorname{op}})$; in other words, any affine scheme is a filtered limit of affine schemes of finite type (\cite[\S 5.3.5]{LurieHTT}).
\end{proof}

Allow us to make a small observation before we continue; it will not be needed in what's to come, but it's a nice little fact.

\begin{proposition}
If $X\in \operatorname{Shv}(\operatorname{Aff}_{\C}^{ft})\subseteq \mathcal{P}(\operatorname{Aff}_{\C}^{ft})$ is an étale sheaf taking values in $d$-truncated spaces, $\tau_{\leq d}\mathcal{S}$, for some $d<\infty$, then the left Kan extension $\overline{X}\in \mathcal{P}(\operatorname{Aff}_{\C})$ is an étale sheaf on $\operatorname{Aff}_{\C}$.
\end{proposition}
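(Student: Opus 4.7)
The plan is to verify the étale sheaf condition for $\overline{X}$ on $\operatorname{Aff}_{\C}$ by reducing it, along the Ind-presentation of $\operatorname{Aff}_{\C}^{\operatorname{op}}$, to the sheaf condition of $X$ on finite-type approximations. Fix $U \in \operatorname{Aff}_{\C}$; after refining we may assume a single covering étale morphism $V\to U$ in $\operatorname{Aff}_{\C}$, and the task is to show that $\overline{X}(U)\to \lim_{n\in\Delta}\overline{X}(\check{C}(V)_n)$ is an equivalence. Writing $U=\varprojlim_\beta U_\beta$ with $U_\beta\in\operatorname{Aff}_{\C}^{ft}$, standard limit descent for étale maps (EGA IV$_4$, \S 8) produces some $\beta_0$ and an étale cover $V_{\beta_0}\to U_{\beta_0}$ which pulls back along $U\to U_{\beta_0}$ to $V\to U$; setting $V_\beta:=V_{\beta_0}\times_{U_{\beta_0}}U_\beta$ for $\beta\geq\beta_0$ yields $V=\varprojlim V_\beta$, and, since cofiltered limits commute with the finite pullbacks defining the Čech nerve, also $\check{C}(V)_n=\varprojlim_\beta \check{C}(V_\beta)_n$ for each $n$.

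The pointwise formula for left Kan extension then gives $\overline{X}(U)\simeq \varinjlim_\beta X(U_\beta)$ and $\overline{X}(\check{C}(V)_n)\simeq \varinjlim_\beta X(\check{C}(V_\beta)_n)$. Applying the sheaf condition of $X$ on $\operatorname{Aff}_{\C}^{ft}$ to each cover $V_\beta\to U_\beta$ yields $X(U_\beta)\simeq \lim_{n\in\Delta}X(\check{C}(V_\beta)_n)$, so the proposition reduces to the interchange
\[
\varinjlim_\beta\,\lim_{n\in\Delta}X(\check{C}(V_\beta)_n)\ \xrightarrow{\ \sim\ }\ \lim_{n\in\Delta}\,\varinjlim_\beta X(\check{C}(V_\beta)_n).
\]

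This interchange is the heart of the argument and is the sole place where the $d$-truncation hypothesis enters: the totalization $\lim_\Delta$ is not a finite limit, and in general it does not commute with filtered colimits in $\mathcal{S}$. However, since $X$ takes values in $\tau_{\leq d}\mathcal{S}$, each cosimplicial space in sight is levelwise $d$-truncated (filtered colimits preserve $d$-truncation), and for such a cosimplicial space the restriction $\lim_\Delta\to \lim_{\Delta_{\leq N}}$ is an equivalence for $N$ sufficiently large in terms of $d$ (cf.\ the coskeletal arguments surrounding \cite[\S 5.5.6]{LurieHTT}, exploiting that $\tau_{\leq d}\mathcal{S}$ is an $(d+1)$-category). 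Once $\lim_\Delta$ is replaced by the finite limit $\lim_{\Delta_{\leq N}}$ on both sides, the interchange follows from the general fact that filtered colimits commute with finite limits in $\mathcal{S}$, completing the proof.
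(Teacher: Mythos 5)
Your proof is correct and follows essentially the same route as the paper: spread the étale cover of an arbitrary affine scheme out to a finite-type model (via standard limit/descent results from EGA IV), use the pointwise left Kan extension formula to write the descent diagram over $U$ as a filtered colimit of descent diagrams over finite-type approximations, and invoke the $d$-truncation hypothesis to replace the totalization by a finite limit so that it commutes with filtered colimits. The only cosmetic differences are that you reduce to a single covering morphism and cite the coskeletal truncation argument where the paper cites \cite[Corollary 6.13]{Du}, and that the paper spells out the spreading-out of the cover (finite presentation, K\"ahler differentials, faithful flatness) by hand.
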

\begin{proof}
Let $\{A\rightarrow B_i\}_{i\in I}$ be the generator of an étale cover of $\operatorname{Spec}(A)\in \operatorname{Aff}_{\C}$, i.e. a finite family of étale morphisms of $\C$-algebras such that the map $A\rightarrow \prod_{i\in I}B_i$ is surjective on $\operatorname{Spec}$. There exist
\begin{itemize}
\item a finite type $\C$-algebra $A'$ together with a map $A'\rightarrow A$,
\item and a family of étale morphisms $\{A'\rightarrow B_i'\}_{i\in I}$,
\end{itemize}
such that $A'\rightarrow \prod_{i\in I} B_i'$ is surjective on $\operatorname{Spec}$ and $B_i\cong A\otimes_{A'}B_i'$ for all $i\in I$. Indeed, each map $A\rightarrow B_i$ is of finite presentation, so we can lift it to any finite type subring $A'\hookrightarrow A$. It remains to be seen that we can ensure (after possibly enlarging $A'$) that the resulting morphism $A'\rightarrow B'=\prod_{i\in I} B_i'$ is faithfully flat and unramified.

Recall that a morphism is unramified if and only if the Kähler differentials vanish; since the morphism $A\rightarrow B =\prod B_i$ is of finite presentation, the Kähler differentials $\Omega_{B/A}$ form a finitely presented module over $B$ generated by the differentials of the generators of $B$ with relations given by the differentials of the relations of $B$. As this is all finite over $A$, it follows that if $\Omega_{B/A}=0$, then we can choose the finite type subring $A'\subseteq A$ large enough for $\Omega_{B'/A'}$ to vanish as well. Faithful flatness is ensured by \cite[Theorem 8.10.5 and Corollary 11.2.6.1]{EGAIV3}.

Now, let $X\in \operatorname{Shv}(\operatorname{Aff}_{\C}^{ft})$ and assume that $X$ takes values in $\tau_{\leq d}\mathcal{S}$. Given $A'\rightarrow A$ and $\{A'\rightarrow B_i'\}_{i\in I}$ as above, let us prove that the descent diagram of $\overline{X}$ over $A$ is a limit diagram. We have
\begin{align*}
A\cong \varinjlim_{A'\rightarrow A''\rightarrow A} A''
\end{align*}
where the filtered colimit is indexed over factorisations of the map $A'\rightarrow A$ through finite type $\C$-algebras $A''$ (using that $\operatorname{Aff}_{A'}^{\operatorname{op}}$ is the $\operatorname{Ind}$-completion of $(\operatorname{Aff}_{A'}^{ft})^{\operatorname{op}}$). Hence, for all $i\in I$, we have
\begin{align*}
B_i\cong \varinjlim_{A'\rightarrow A''\rightarrow A} A''\otimes_{A'}B_i'
\end{align*}
and likewise for all iterated tensor products of the $B_i$ over $A$. It follows that the descent diagram over $A$ is a filtered colimit of descent diagrams over $A''$. The latter are limit diagrams as we've assumed $X$ to be a sheaf, and since $X$ takes values in $d$-truncated spaces, these limits are finite (\cite[Corollary 6.13]{Du}). The desired claim follows since filtered colimits commute with finite limits.
\end{proof}

We use the canonical extension to define the underlying topological presheaf of an algebraic presheaf.

\begin{definition}
Let $\widehat{\scriptstyle(-)}^{pre}\colon \mathcal{P}(\operatorname{Aff}_{\C}^{ft})\rightarrow \mathcal{P}(\mathfrak{T})$ be the functor that sends a presheaf $X$ over $\operatorname{Aff}_{\C}^{ft}$ to the following presheaf over $\mathfrak{T}$,
\begin{align*}
S \mapsto \overline{X}(\operatorname{Spec}(C(S,\C))),
\end{align*}
sending a topological space $S\in \mathfrak{T}$ to the value of the canonical extension of $X$ on the ring of complex continuous functions on $S$. A continuous map $S\rightarrow T$ is sent to the map induced by precomposition.
\end{definition}

\begin{remark}
We simply need the canonical extension as an intermediary construction as the ring $C(S,\C)$ is not in general of finite type. Analytification beyond the finite type setting has been considered in \cite[\S 6.1]{PortaYu20} and \cite[\S 3]{HolsteinPorta}.
\end{remark}

The following lemma tells us what happens on representables.

\begin{lemma}\label{underlying topological stack on representables}
For any $U\in \operatorname{Aff}_{\C}^{ft}$, there is a functorial equivalence $\widehat{U}^{pre}\xrightarrow{\sim} U^{\operatorname{top}}$, where we identify $U$ and $U^{\operatorname{top}}$ with their associated representables.
\end{lemma}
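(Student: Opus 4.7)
The approach is to unpack both sides of the claimed equivalence to concrete sets and exhibit the classical natural bijection between them given by pullback of regular functions. Since $U\in \operatorname{Aff}_{\C}^{ft}$ is representable, \Cref{extending to schemes of non-finite type} says that $\overline{U}$ is simply the presheaf on $\operatorname{Aff}_{\C}$ represented by $U$, so
\[
\widehat{U}^{pre}(S) \;=\; \overline{U}(\operatorname{Spec}(C(S,\C))) \;\cong\; \operatorname{Hom}_{\C\text{-alg}}(\mathcal{O}(U), C(S,\C)),
\]
while $U^{\operatorname{top}}(S) \cong \operatorname{Map}_{\operatorname{Top}}(S, U^{\operatorname{top}})$ by definition of the associated representable. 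It therefore suffices to construct a natural isomorphism between these two sets of morphisms.

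The natural direction is easy: any continuous map $f\colon S\to U^{\operatorname{top}}$ induces a $\C$-algebra homomorphism $\mathcal{O}(U)\to C(S,\C)$ by $g\mapsto g\circ f$, using that every regular function on $U$ restricts to a continuous complex-valued function on $U^{\operatorname{top}}$. This assignment is visibly functorial in $S$ (covariantly) and in $U$ (contravariantly), so one only needs to exhibit an inverse. For this I would pick a closed immersion $U\hookrightarrow \mathbb{A}^n_{\C}$ presenting $\mathcal{O}(U)$ as a quotient of $\C[x_1,\ldots,x_n]$ by finitely many relations; this realises $U^{\operatorname{top}}$ as a closed subspace of $\C^n$ (in the Euclidean topology). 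A $\C$-algebra map $\phi\colon \mathcal{O}(U)\to C(S,\C)$ is then determined by the functions $f_i := \phi(x_i)\in C(S,\C)$, which satisfy the defining relations of $U$ pointwise on $S$. The tuple $(f_1,\ldots,f_n)\colon S\to \C^n$ therefore factors set-theoretically through $U^{\operatorname{top}}\subseteq \C^n$, and mutual inverseness with the forward map is a direct check on coordinates.

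The only non-formal point is verifying continuity of this constructed map $S\to U^{\operatorname{top}}$, and the main obstacle to watch out for is making sure this continuity (and the eventual naturality) are genuinely independent of the chosen closed immersion. Continuity is handled by the universal property of the product topology on $\C^n$ together with the subspace topology on $U^{\operatorname{top}}$; independence of presentation follows automatically once one observes that in the bijection above only the forward direction $f\mapsto(g\mapsto g\circ f)$ is intrinsic, and the inverse is characterised by being its inverse. Functoriality in $U$, which is what is meant by the word ``functorial'' in the statement, then drops out from the intrinsic description of the forward map and the fact that the analytification functor ${\scriptstyle(-)}^{\operatorname{top}}$ is compatible with pullback of regular functions.
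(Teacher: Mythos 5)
Your proposal is correct and matches the paper's argument in substance: both unpack $\widehat{U}^{pre}(S)$ and $U^{\operatorname{top}}(S)$ to concrete mapping sets and identify them via the evident ``evaluation'' bijection. The only difference is in how the topology on $U^{\operatorname{top}}$ is handled: the paper quotes the description of $U^{\operatorname{top}}$ as $\operatorname{Hom}_{\C}(A,\C)$ with the initial topology with respect to evaluation at \emph{all} elements of $A$ (citing Payne), which makes both sides coincide with the presentation-free set of maps $A\times S\to\C$ that are continuous in $S$ and algebraic in $A$, whereas you re-derive this by choosing a closed immersion into $\mathbb{A}^n_{\C}$ and then checking continuity and independence of the presentation by hand --- a slightly longer but equally valid and more self-contained route.
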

\begin{proof}
Note first of all that for $U=\operatorname{Spec}(A)$, the topological space $U^{\operatorname{top}}$ can also be defined by equipping the set of complex points $U(\C)=\operatorname{Hom}_{\C}(A,\C)$ with the initial topology with respect to the maps
\begin{align*}
U(\C)\rightarrow \C, \quad f\mapsto f(a),\quad a\in A
\end{align*}
(see for example \cite[\S 2.2 including the footnotes]{Payne}). It follows that for $U=\operatorname{Spec}(A)$ and any $S\in \mathfrak{T}$, both
\begin{align*}
\widehat{U}^{pre}(S)=\operatorname{Map}_{\operatorname{CRing}}(A, C(S,\C))\quad\text{and}\quad U^{\operatorname{top}}(S)=\operatorname{Map}_{\mathfrak{T}}(S,\operatorname{Hom}_{\C}(A,\C)) 
\end{align*}
identify with the set of maps $f\colon A\times S\rightarrow \C$ satisfying that
\begin{itemize}
\item for all $a\in A$, the map $f(a,-)\colon S\rightarrow \C$ is continuous;
\item for all $s\in S$, the map $f(-,s)\colon A\rightarrow \C$ is a homomorphism of $\C$-algebras.\qedhere
\end{itemize}
\end{proof}

We have the following more or less immediate corollary.

\begin{corollary}
The functor $\widehat{\scriptstyle(-)}^{pre}$ is left adjoint to restriction along ${\scriptstyle(-)}^{\operatorname{top}}$.
\end{corollary}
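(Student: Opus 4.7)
The plan is to identify $\widehat{\scriptstyle(-)}^{pre}$ with the left adjoint to restriction by invoking the universal property of presheaf $\infty$-categories (\cite[Theorem 5.1.5.6]{LurieHTT}, see also \cite[Corollary F]{HaugsengHebestreitLinskensNuiten}): for any functor $f\colon C\rightarrow D$ of small $\infty$-categories, the left adjoint $f_!\colon \mathcal{P}(C)\rightarrow \mathcal{P}(D)$ to restriction along $f$ is uniquely characterised as the colimit-preserving functor whose restriction along the Yoneda embedding $y_C$ agrees with $y_D\circ f$. I would therefore reduce the corollary to verifying two points: that $\widehat{\scriptstyle(-)}^{pre}$ preserves colimits, and that it sends the representable $y(U)$ to $y(U^{\operatorname{top}})$ naturally in $U\in \operatorname{Aff}_{\C}^{ft}$.

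The second point is the content of \Cref{underlying topological stack on representables}, so the only new input is colimit preservation. For this, I would factor $\widehat{\scriptstyle(-)}^{pre}$ as the composite
\begin{align*}
\mathcal{P}(\operatorname{Aff}_{\C}^{ft})\xrightarrow{\ \overline{\scriptstyle(-)}\ }\mathcal{P}(\operatorname{Aff}_{\C})\xrightarrow{\ \alpha^*\ }\mathcal{P}(\mathfrak{T}),
\end{align*}
where $\alpha^*$ denotes precomposition along the functor $\alpha\colon \mathfrak{T}\rightarrow \operatorname{Aff}_{\C}$, $S\mapsto \operatorname{Spec}(C(S,\C))$ (this is well-defined, since a continuous map $S\rightarrow T$ induces a $\C$-algebra homomorphism $C(T,\C)\rightarrow C(S,\C)$ by precomposition). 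The canonical extension $\overline{\scriptstyle(-)}$ preserves colimits because by construction (see \Cref{extending to schemes of non-finite type}) it is left Kan extension along $\iota^{\operatorname{op}}$, hence the left adjoint to restriction along $\iota$. The functor $\alpha^*$ preserves colimits because colimits in presheaf $\infty$-categories are computed pointwise, and for each fixed $S\in \mathfrak{T}$ the assignment $Y\mapsto \alpha^*(Y)(S)=Y(\operatorname{Spec}(C(S,\C)))$ is evaluation at a single object, which preserves all colimits.

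The argument is essentially formal; there is no substantial obstacle, since the real work, namely the identification on representables, has already been carried out in \Cref{underlying topological stack on representables}. If I were to worry about anything, it would be that one might expect a more direct description of the left adjoint via the pointwise formula for left Kan extension along ${\scriptstyle(-)}^{\operatorname{top}}$, but the universal property already packages up all such identifications into the sole requirement of colimit preservation together with agreement on representables.
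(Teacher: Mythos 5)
Your proposal is correct and follows essentially the same route as the paper: both factor $\widehat{\scriptstyle(-)}^{pre}$ through $\mathcal{P}(\operatorname{Aff}_{\C})$ as the canonical extension followed by restriction along $S\mapsto C(S,\C)$, observe that both factors preserve colimits, and then combine the identification on representables (\Cref{underlying topological stack on representables}) with the universal property of presheaf categories (\cite[Corollary F]{HaugsengHebestreitLinskensNuiten}). The only cosmetic difference is that you spell out why the restriction functor preserves colimits (pointwise computation), whereas the paper simply notes it is a left adjoint.
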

\begin{proof}
In view of the lemma above, the diagram below is homotopy-commutative where the horizontal maps are the Yoneda embeddings.
\begin{center}
\begin{tikzpicture}
\matrix (m) [matrix of math nodes,row sep=2em,column sep=2em]
  {
\operatorname{Aff}_{\C}^{ft} & \mathcal{P}(\operatorname{Aff}_{\C}^{ft}) \\
\mathfrak{T} & \mathcal{P}(\mathfrak{T}) \\
  };
  \path[-stealth]
(m-1-1) edge node[left]{${\scriptstyle(-)}^{\operatorname{top}}$} (m-2-1) edge (m-1-2)
(m-1-2) edge node[right]{$\widehat{\scriptstyle(-)}^{pre}$} (m-2-2) 
(m-2-1) edge (m-2-2)
;
\end{tikzpicture}
\end{center}

The functor $\widehat{\scriptstyle(-)}^{pre}$ is defined as a composite of two left adjoints:
\begin{align*}
\widehat{\scriptstyle(-)}^{pre}\colon \mathcal{P}(\operatorname{Aff}_{\C}^{ft})\xrightarrow{\overline{\scriptstyle(-)}}\mathcal{P}(\operatorname{Aff}_{\C})\xrightarrow{c^*} \mathcal{P}(\mathfrak{T})
\end{align*}
where $c^*$ is restriction along the functor $c\colon \mathfrak{T}\rightarrow \operatorname{Aff}_{\C}$, $S\mapsto C(S,\C)$. It follows that we can identify $\widehat{\scriptstyle(-)}^{pre}$ as the unique colimit preserving functor restricting to ${\scriptstyle(-)}^{\operatorname{top}}$ along the Yoneda embeddings; equivalently, as the left adjoint to the restriction along ${\scriptstyle(-)}^{\operatorname{top}}$ (\cite[Corollary F]{HaugsengHebestreitLinskensNuiten}).
\end{proof}

Now we move onto $\infty$-stacks.

\begin{definition}\label{underlying topological stack}
Consider the composite
\begin{align*}
\widehat{\scriptstyle(-)}:=L\circ \widehat{\scriptstyle(-)}^{pre}\circ \iota \colon \operatorname{Shv}(\operatorname{Aff}_{\C}^{ft})\rightarrow \operatorname{Shv}(\mathfrak{T})
\end{align*}
where $\iota$ is the inclusion of the sheaf $\infty$-category and $L$ is the sheafification functor. We call $\widehat{X}\in \operatorname{Shv}(\mathfrak{T})$ the \textit{underlying topological $\infty$-stack} of  $X\in \operatorname{Shv}(\operatorname{Aff}_{\C}^{ft})$.

Explicitly, for $X\in \operatorname{Shv}(\operatorname{Aff}_{\C}^{ft})$, the underlying topological $\infty$-stack $\widehat{X}\in \operatorname{Shv}(\mathfrak{T})$ is the sheafification of the presheaf
\begin{align*}
S\mapsto \overline{X}(C(S,\C))
\end{align*}
evaluating the canonical extension of $X$ on the ring of complex continuous functions on $S$.
\end{definition}

\begin{lemma}\label{taking underlying topological stack is left adjoint}
The functor $\widehat{\scriptstyle(-)}$ is left adjoint to restriction along ${\scriptstyle(-)}^{\operatorname{top}}$ (at the level of sheaves).
\end{lemma}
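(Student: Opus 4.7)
The plan is to bootstrap the presheaf-level adjunction $\widehat{\scriptstyle(-)}^{pre}\dashv {\scriptstyle(-)}^{\operatorname{top},*}$ established in the previous corollary to the sheaf level. The standard machinery works provided we verify that the right adjoint, i.e.~restriction along ${\scriptstyle(-)}^{\operatorname{top}}$, already preserves the sheaf subcategories.

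First I would check that ${\scriptstyle(-)}^{\operatorname{top}}\colon \operatorname{Aff}_{\C}^{ft}\rightarrow \mathfrak{T}$ is a morphism of sites from the étale site on the left to the open cover site on the right: if $\{U_i\rightarrow U\}_{i\in I}$ generates an étale covering sieve in $\operatorname{Aff}_{\C}^{ft}$, then the analytifications $\{U_i^{\operatorname{top}}\rightarrow U^{\operatorname{top}}\}_{i\in I}$ are local homeomorphisms of Hausdorff spaces whose images jointly cover $U^{\operatorname{top}}$; hence the sieve they generate on $U^{\operatorname{top}}$ contains an honest open cover (locally, each $U_i^{\operatorname{top}}\rightarrow U^{\operatorname{top}}$ admits local sections on the images of small enough open sets), and is therefore a covering sieve in $\mathfrak{T}$. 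It follows that for any $Y\in \operatorname{Shv}(\mathfrak{T})$ the presheaf $Y\circ {\scriptstyle(-)}^{\operatorname{top}}$ is a sheaf on $\operatorname{Aff}_{\C}^{ft}$, so restriction along ${\scriptstyle(-)}^{\operatorname{top}}$ restricts to a functor $\operatorname{Shv}(\mathfrak{T})\rightarrow \operatorname{Shv}(\operatorname{Aff}_{\C}^{ft})$.

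Having this, the adjunction is a formal consequence. Writing $\iota_1,L_1$ and $\iota_2,L_2$ for the sheafification adjunctions on $\operatorname{Aff}_{\C}^{ft}$ and $\mathfrak{T}$ respectively, and writing $R$ for restriction along ${\scriptstyle(-)}^{\operatorname{top}}$ at either the presheaf or sheaf level, the compatibility $\iota_2\circ R\simeq R\circ \iota_1$ (which is what the previous paragraph verifies) allows us to compute, for $X\in \operatorname{Shv}(\operatorname{Aff}_{\C}^{ft})$ and $Y\in \operatorname{Shv}(\mathfrak{T})$,
\begin{align*}
\operatorname{Map}_{\operatorname{Shv}(\mathfrak{T})}\bigl(\widehat{X},Y\bigr)
&=\operatorname{Map}_{\operatorname{Shv}(\mathfrak{T})}\bigl(L_2\widehat{\iota_1 X}^{pre},Y\bigr)\\
&\simeq \operatorname{Map}_{\mathcal{P}(\mathfrak{T})}\bigl(\widehat{\iota_1 X}^{pre},\iota_2 Y\bigr)\\
&\simeq \operatorname{Map}_{\mathcal{P}(\operatorname{Aff}_{\C}^{ft})}\bigl(\iota_1 X, R(\iota_2 Y)\bigr)\\
&\simeq \operatorname{Map}_{\mathcal{P}(\operatorname{Aff}_{\C}^{ft})}\bigl(\iota_1 X, \iota_1 (RY)\bigr)\\
&\simeq \operatorname{Map}_{\operatorname{Shv}(\operatorname{Aff}_{\C}^{ft})}\bigl(X, RY\bigr),
\end{align*}
where we have used the $L_2\dashv \iota_2$ adjunction, the presheaf adjunction $\widehat{\scriptstyle(-)}^{pre}\dashv R$, the compatibility $\iota_2R\simeq R\iota_1$, and finally the fully-faithfulness of $\iota_1$. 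This exhibits $\widehat{\scriptstyle(-)}$ as left adjoint to $R$, as claimed.

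The only non-formal input is thus the verification that analytification transports étale covers to covers in $\mathfrak{T}$, which is the step that could be subtle if one were careless with the topology on $\mathfrak{T}$; once this is in place, everything else is a routine 2-out-of-3 style manipulation of adjoints.
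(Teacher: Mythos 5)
Your proposal follows essentially the same route as the paper: verify that restriction along ${\scriptstyle(-)}^{\operatorname{top}}$ preserves sheaves, then deduce the adjunction formally (the paper appeals to uniqueness of adjoints where you write out the chain of mapping-space equivalences; both are fine). One small caveat: to conclude that $Y\circ{\scriptstyle(-)}^{\operatorname{top}}$ is a sheaf it is not quite enough that étale covering families are sent to covering families in $\mathfrak{T}$ --- one also needs that ${\scriptstyle(-)}^{\operatorname{top}}$ preserves the fibre products appearing in the descent condition (equivalently, that $\widehat{\scriptstyle(-)}^{pre}$ sends local equivalences to local equivalences, which involves the \v{C}ech nerves and not just the generated sieves); this holds by \cite[Exposé XII, \S 1.2]{SGA1} and is the reason the paper's proof records the preservation of fibre products alongside the preservation of covers.
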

\begin{proof}
Note that ${\scriptstyle(-)}^{\operatorname{top}}\colon \operatorname{Aff}_{\C}^{ft}\rightarrow \mathfrak{T}$ sends surjective étale morphisms to surjective local homeomorphisms (\cite[Exposé XII, Propositions 3.1 and 3.2]{SGA1}), and that it also preserves fibre products (\cite[Exposé XII, \S 1.2]{SGA1}). It follows that ${\scriptstyle(-)}^{\operatorname{top}}$ preserves covering families and consequently that $\widehat{\scriptstyle(-)}^{pre}$ preserves local equivalences. Hence, restriction along ${\scriptstyle(-)}^{\operatorname{top}}$ (at the level of presheaves) preserves sheaves and thus restricts to a morphism
\begin{align*}
t^*\colon \operatorname{Shv}(\mathfrak{T})\rightarrow\operatorname{Shv}(\operatorname{Aff}_{\C}^{ft}),
\end{align*}
which, by uniqueness of adjoints, must be right adjoint to $\widehat{\scriptstyle(-)}=L\circ \widehat{\scriptstyle(-)}^{pre}\circ \iota$.
\end{proof}

We now show that the change of base defined in \Cref{underlying topological stack} recovers the more common change of base procedure outlined at the start of this section, that is, transferring the \v{C}ech nerve of an algebraic stack into the topological setting via the functor ${\scriptstyle(-)}^{\operatorname{top}}\colon \operatorname{Aff}_{\C}^{ft}\rightarrow \mathfrak{T}$ and considering the resulting topological $\infty$-stack. 

\begin{proposition}
Let $X\in \operatorname{Shv}(\operatorname{Aff}_{\C}^{ft})$ be an algebraic $\infty$-stack and $f\colon U\rightarrow X$ a representable effective epimorphism from an affine scheme $U\in \operatorname{Aff}_{\C}^{ft}$. Then the induced map $\hat{f}\colon \widehat{U}\rightarrow \widehat{X}$ is a representable effective epimorphism whose \v{C}ech nerve identifies with the underlying topological \v{C}ech nerve of $f$, $\check{C}(\hat{f})\cong \check{C}(f)^{\operatorname{top}}$. In particular, $\widehat{X}$ identifies with the colimit of $\check{C}(f)^{\operatorname{top}}$ in $\operatorname{Shv}(\mathfrak{T})$:
\begin{align*}
\widehat{X}\xleftarrow{\ \simeq \ } \varinjlim_{[n]\in \Delta^{op}} \left(U^{\resizebox{5pt}{!}{$\displaystyle\mathop{\times}_X$}n} \right)^{\operatorname{top}}.
\end{align*}
If $f\colon U\rightarrow X$ is étale, so is $\hat{f}\colon \widehat{U}\rightarrow \widehat{X}$. In other words, $\widehat{\scriptstyle(-)}$ sends Deligne--Mumford stacks to étale topological $\infty$-stacks.
\end{proposition}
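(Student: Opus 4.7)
The plan is to exploit two key properties of the functor $\widehat{\scriptstyle(-)}$: first, that it is a left adjoint (and hence preserves colimits and effective epimorphisms), by \Cref{taking underlying topological stack is left adjoint}; and second, that it is left exact. Left exactness follows by composing Lemma \ref{canonical extension is left exact} (the canonical extension $\overline{\scriptstyle(-)}$ is left exact), with the fact that the intermediate restriction $c^*$ along $c\colon \mathfrak{T}\to \operatorname{Aff}_{\C}$, $S\mapsto C(S,\C)$, is left exact as a presheaf-level restriction, the fully faithful inclusion $\iota$ of sheaves into presheaves is left exact as a right adjoint, and sheafification $L$ is left exact. Combined with \Cref{underlying topological stack on representables} (which, via the full faithfulness of the Yoneda embeddings into $\operatorname{Shv}(\mathfrak{T})$, gives $\widehat{V}\simeq V^{\operatorname{top}}$ functorially for any representable $V\in \operatorname{Aff}_{\C}^{ft}$), these are all the tools needed.

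Since $f$ is an effective epimorphism, $X\simeq \varinjlim \check{C}(f)$ in $\operatorname{Shv}(\operatorname{Aff}_{\C}^{ft})$. Applying $\widehat{\scriptstyle(-)}$ and using that it preserves colimits gives $\widehat{X}\simeq \varinjlim \widehat{\check{C}(f)}$. By left exactness, $\widehat{\scriptstyle(-)}$ preserves the fibre products $U\times_X\cdots \times_X U$, so its value on each level of the Čech nerve identifies with the corresponding Čech fibre product formed in $\operatorname{Shv}(\mathfrak{T})$, that is, with $\check{C}(\hat{f})_n$. Since each $\check{C}(f)_n$ is representable (by representability of $f$), \Cref{underlying topological stack on representables} identifies $\widehat{\check{C}(f)_n}\simeq \check{C}(f)_n^{\operatorname{top}}$. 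Functoriality of this equivalence gives $\check{C}(\hat{f})\simeq \check{C}(f)^{\operatorname{top}}$ as simplicial objects, and the colimit formula for $\widehat{X}$ follows. In particular, $\widehat{X}$ is the geometric realisation of the Čech nerve of $\hat{f}$, so $\hat{f}$ is an effective epimorphism (\cite[Corollary 6.2.3.5]{LurieHTT}).

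For representability, let $S\to \widehat{X}$ be any map with $S\in \mathfrak{T}$. As $\hat{f}$ is an effective epimorphism in the $\infty$-topos $\operatorname{Shv}(\mathfrak{T})$, which is generated by representables under the canonical topology, there is an open cover $\{S_i\to S\}$ over which the map $S_i\to \widehat{X}$ lifts through $\hat{f}$ to $\widehat{U}=U^{\operatorname{top}}$. On such a cover, the pullback factorises as
\begin{align*}
S_i\times_{\widehat{X}}\widehat{U}\simeq S_i\times_{U^{\operatorname{top}}} (U\times_X U)^{\operatorname{top}},
\end{align*}
using the Čech nerve identification of the previous paragraph. The right hand side is a fibre product of second countable locally compact Hausdorff spaces, hence belongs to $\mathfrak{T}$. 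As being a local homeomorphism and belonging to $\mathfrak{T}$ after a base change is stable under gluing along an open cover, the full pullback $S\times_{\widehat{X}}\widehat{U}$ lies in $\mathfrak{T}$, establishing representability of $\hat{f}$.

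Finally, for the étale claim, we use the same cover argument: checking that $\hat{f}$ is a local homeomorphism is local on the target, so after pulling back to the lifted pieces $S_i\to \widehat{U}$ we must check that $(U\times_X U)^{\operatorname{top}}\to U^{\operatorname{top}}$ is a local homeomorphism. But this is the analytification of the étale projection $U\times_X U\to U$ of affine schemes of finite type, so the claim follows from \cite[Exposé XII, Propositions 3.1 and 3.2]{SGA1}. The anticipated main obstacle is the representability step, as the description of $\widehat{X}$ is not a priori transparent for arbitrary test objects $S\in \mathfrak{T}$; that said, the Čech nerve identification already secured in the second paragraph converts the problem to a purely topological fibre product computation after passing to a cover.
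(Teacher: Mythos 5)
Your proposal is correct and follows essentially the same route as the paper: both hinge on the observation that $\widehat{\scriptstyle(-)}=L\circ c^*\circ\overline{\scriptstyle(-)}\circ\iota$ is a left exact left adjoint (via \Cref{canonical extension is left exact} and \Cref{taking underlying topological stack is left adjoint}), hence preserves \v{C}ech nerves and effective epimorphisms, after which everything reduces to the identification $\widehat{V}\simeq V^{\operatorname{top}}$ on representables from \Cref{underlying topological stack on representables}. The paper leaves the representability and étaleness of $\hat{f}$ as immediate consequences of that identification, whereas you spell out the local-lifting argument; that is a faithful elaboration rather than a different proof.
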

\begin{proof}
Recall that $\widehat{\scriptstyle(-)}=L\circ c^* \circ \overline{\scriptstyle(-)}\circ \iota$ and note that $\iota$ and $c^*$ are right adjoints and $L$ and $\overline{\scriptstyle(-)}$ are left exact (\Cref{canonical extension is left exact}). It follows that $\widehat{\scriptstyle(-)}$ is a left exact left adjoint (\Cref{taking underlying topological stack is left adjoint}) and as such it preserves \v{C}ech nerves and effective epimorphisms. The claim then follows directly from \Cref{underlying topological stack on representables}.
\end{proof}

In view of this result, we also write $\widehat{X}=X^{\operatorname{top}}$.

\begin{remark}\label{not Artin stacks}
We note that our focus on étale topological $\infty$-stacks in the previous sections mean that, as written, the results of this note apply mostly to Deligne--Mumford stacks, not Artin stacks, i.e. algebraic stacks admitting a \textit{smooth} atlas.
\end{remark}

We make a small additional observation about coarse moduli spaces. For a scheme $U\in \operatorname{Sch}_{\C}^{ft}$ of finite type over $\C$, we also denote by $U$ the algebraic stack
``represented'' by $U$, i.e. sending $V\in \operatorname{Aff}_{\C}^{ft}$ to $\operatorname{Map}_{\operatorname{Sch}_{\C}^{ft}}(V,U)$.

\begin{definition}
Let $X\in \operatorname{Shv}(\operatorname{Aff}_{\C}^{ft})$ be an algebraic stack. A \textit{coarse moduli space} for $X$ is a scheme $U\in \operatorname{Sch}_{\C}^{ft}$ together with a morphism $\tau\colon X\rightarrow U$ such that
\begin{enumerate}
\item $\tau$ induces a bijection on geometric points
\begin{align*}
\pi_0 X(\operatorname{Spec}(\C))\xrightarrow{\ \cong \ }\operatorname{Map}_{\operatorname{Sch}_{\C}^{ft}}(\operatorname{Spec}(\C),U).
\end{align*}
\item for any scheme $U'\in \operatorname{Sch}_{\C}^{ft}$ and any morphism $f\colon X\rightarrow U'$, there is an essentially unique factorisation of $f$ through $\tau$:
\begin{center}
\begin{tikzpicture}
\matrix (m) [matrix of math nodes,row sep=2em,column sep=2em]
  {
X & U \\
  & U' \\
  };
  \path[-stealth]
(m-1-1) edge node[above]{$\tau$} (m-1-2)
(m-1-1) edge node[below left]{$f$} (m-2-2)
;
\path[-stealth, dashed]
(m-1-2) edge (m-2-2)
;
\end{tikzpicture}
\end{center}
\end{enumerate} 
We also say that the morphism $\tau$ exhibits $U$ a \textit{coarse moduli space} for $X$.
\end{definition}

\begin{proposition}
Let $X\in \operatorname{Shv}(\operatorname{Aff}^{ft}_{\C})$ be an algebraic stack and suppose $\tau \colon X\rightarrow U$ exhibits $U\in \operatorname{Sch}_{\C}^{ft}$ as a coarse moduli space for $X$. Then the underlying morphism of topological stacks $X^{\operatorname{top}}\rightarrow U^{\operatorname{top}}$ exhibits the topological space $U^{\operatorname{top}}$ as a coarse moduli space for $X^{\operatorname{top}}$ (see \Cref{coarse moduli space}).
\end{proposition}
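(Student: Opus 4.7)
The plan is to verify the two clauses of \Cref{coarse moduli space} for $\widehat\tau\colon X^{\operatorname{top}}\to U^{\operatorname{top}}$, reducing each to its algebraic counterpart. For clause (1), I would compute both sides directly: since $\ast\in\mathfrak T$ admits only the maximal covering sieve, sheafification is trivial there, so $\widehat X(\ast) \simeq \overline X(\operatorname{Spec}(C(\ast,\C))) = \overline X(\operatorname{Spec}(\C))$; as $\operatorname{Spec}(\C)\in \operatorname{Aff}_\C^{ft}$ is representable, the canonical extension restricts to the identity there (\Cref{extending to schemes of non-finite type}), so this equals $X(\operatorname{Spec}(\C))$. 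By \Cref{underlying topological stack on representables}, $\operatorname{Map}_{\operatorname{Top}}(\ast, U^{\operatorname{top}}) = U^{\operatorname{top}}(\ast) \simeq U(\operatorname{Spec}(\C))$. Passing to $\pi_0$, the map induced by $\widehat\tau$ is precisely the algebraic map on geometric points, a bijection by assumption.

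For clause (2), fix $S'\in\operatorname{Top}$ and $f\colon X^{\operatorname{top}}\to\underline{S'}$. I would choose a schematic atlas $\pi\colon V\to X$ with $V\in \operatorname{Sch}_\C^{ft}$; by the preceding proposition $\widehat X$ is the colimit of the underlying topological \v{C}ech nerve $\check{C}(\pi)^{\operatorname{top}}$. The morphism $f$ then corresponds to a continuous $\tilde f := f\circ \pi^{\operatorname{top}}\colon V^{\operatorname{top}}\to S'$ that coequalises the two projections from $(V\times_X V)^{\operatorname{top}}$. The composite $\tau\pi\colon V\to U$ is a morphism in $\operatorname{Sch}_\C^{ft}$, yielding a continuous $(\tau\pi)^{\operatorname{top}}\colon V^{\operatorname{top}}\to U^{\operatorname{top}}$. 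Using clause (1) applied to $\tau$, the fibres of $V(\C)\to U(\C)$ coincide with the equivalence classes in $V(\C)$ determined by ``mapping to isomorphic objects of $X$'', which is precisely the coequaliser relation for $\tilde f$. Thus $\tilde f$ admits a unique set-theoretic factorisation $\tilde f = \bar f\circ (\tau\pi)^{\operatorname{top}}$ for some $\bar f\colon U^{\operatorname{top}}\to S'$, and the uniqueness of any continuous factorisation is immediate from this.

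The main obstacle is upgrading $\bar f$ to a continuous map, which amounts to showing that $(\tau\pi)^{\operatorname{top}}\colon V^{\operatorname{top}}\to U^{\operatorname{top}}$ is a topological quotient map. In the natural setting of a proper Deligne--Mumford stack with an étale atlas, this would combine two classical facts (cf.\ \cite[Exposé XII]{SGA1}): étale morphisms of schemes analytify to local homeomorphisms, and proper surjective morphisms of schemes of finite type over $\C$ analytify to proper (closed) continuous surjections, hence quotient maps. With $\pi$ étale surjective and $\tau$ proper surjective, one deduces the quotient property for $(\tau\pi)^{\operatorname{top}}$ by a local argument on $U^{\operatorname{top}}$. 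Once continuity of $\bar f$ is established, the identity $\bar f\circ \widehat\tau = f$ on all of $\widehat X$ follows from effective descent along $\pi^{\operatorname{top}}$.
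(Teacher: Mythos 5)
Your treatment of clause (1) is correct and agrees with the paper's: evaluation at $\ast$ inverts the covering monomorphisms of $\mathfrak{T}$, so $X^{\operatorname{top}}(\ast)\simeq\overline{X}(\operatorname{Spec}(C(\ast,\C)))=X(\operatorname{Spec}(\C))$, and \Cref{underlying topological stack on representables} identifies the target; the map on $\pi_0$ is then the algebraic map on geometric points.

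For clause (2), however, there is a genuine gap, and the route itself cannot prove the statement as given. The proposition assumes only that $X\in\operatorname{Shv}(\operatorname{Aff}^{ft}_{\C})$ admits a coarse moduli space: it does not assume that $X$ admits a schematic étale atlas, that $X$ is Deligne--Mumford, or that $\tau$ is proper or separated. Your argument begins by choosing such an atlas and ends by invoking properness of $\tau$, so at best it addresses a special case. Even there, the decisive step --- that $(\tau\pi)^{\operatorname{top}}\colon V^{\operatorname{top}}\to U^{\operatorname{top}}$ is a topological quotient map --- is precisely the hard analytic content and is not established: the facts cited from \cite{SGA1} concern morphisms of schemes, whereas $\tau$ is a morphism of stacks, $\tau\pi$ is in general not proper, and no reason is given for it to be submersive; the promised ``local argument on $U^{\operatorname{top}}$'' is where all the work would have to happen. (Unwinding your own reduction, a morphism $X^{\operatorname{top}}\to\underline{S'}$ is a continuous map $V^{\operatorname{top}}\to S'$ coequalising the groupoid, so you are in effect asserting that $U^{\operatorname{top}}$ is the topological coequaliser --- a nontrivial theorem in its own right.) The paper avoids all of this with a purely formal argument: by \Cref{taking underlying topological stack is left adjoint}, $\widehat{\scriptstyle(-)}$ is left adjoint to restriction $t^*$ along ${\scriptstyle(-)}^{\operatorname{top}}$, giving natural equivalences $\operatorname{Map}(X^{\operatorname{top}},S)\simeq\operatorname{Map}(X,t^*S)$ and $\operatorname{Map}(U^{\operatorname{top}},S)\simeq\operatorname{Map}(U,t^*S)$ compatible with precomposition by $\tau$ and $\tau^{\operatorname{top}}$, so the essentially unique factorisation through $\tau$ against $t^*S$ transfers directly to $\tau^{\operatorname{top}}$ against $S$ --- no atlas, no properness, and no continuity argument required.
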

\begin{proof}
By definition, the map $X^{\operatorname{top}}(\ast)\rightarrow U^{\operatorname{top}}(\ast)$ identifies with the map $X(\operatorname{Spec}(\C))\rightarrow U(\operatorname{Spec}(\C))$, so item 1 of \Cref{coarse moduli space} is immediate. Denoting by $t^*$ the restriction along ${\scriptstyle(-)}^{\operatorname{top}}\colon \operatorname{Aff}_{\C}^{ft}\rightarrow  \mathfrak{T}$, the adjunction ${\scriptstyle(-)}^{\operatorname{top}}\dashv t^*$ implies that we have a homotopy commutative square as below for any topological space $S$,
\begin{center}
\begin{tikzpicture}
\matrix (m) [matrix of math nodes,row sep=2em,column sep=2em]
  {
\operatorname{Map}_{\operatorname{Shv}(\mathfrak{T})}(X^{\operatorname{top}},S) & \operatorname{Map}_{\operatorname{Shv}(\operatorname{Aff}_{\C}^{ft})}(X,t^*S) \\
\operatorname{Map}_{\operatorname{Shv}(\mathfrak{T})}(U^{\operatorname{top}},S) & \operatorname{Map}_{\operatorname{Shv}(\operatorname{Aff}_{\C}^{ft})}(U,t^*S) \\
  };
  \path[-stealth]
(m-1-1) edge node[above]{$\sim$} (m-1-2)
(m-1-1) edge node[left]{$(\tau^{\operatorname{top}})^*$} (m-2-1)
(m-1-2) edge node[right]{$\tau^*$} (m-2-2)
(m-2-1) edge node[below]{$\sim$} (m-2-2)
;
\end{tikzpicture}
\end{center}
Thus essentially unique factorisation of morphisms $X\rightarrow t^*S$ through $\tau$ implies essentially unique factorisation of morphisms $X^{\operatorname{top}}\rightarrow S$ through $\tau^{\operatorname{top}}$.
\end{proof}

\begin{remark}
We recall that for any $U\in \operatorname{Sch}_{\C}^{ft}$, the underlying topological space $U^{\operatorname{top}}$ is locally compact as it admits a cover by subspaces of $\C^n$. Note moreover that
\begin{itemize}
\item if $U$ is separated, then $U^{\operatorname{top}}$ is Hausdorff (\cite[Exposé XII, Proposition 3.1]{SGA1});
\item if $U$ is complete, then $U^{\operatorname{top}}$ is compact (\cite[Exposé XII, Proposition 3.2]{SGA1}).\qedhere
\end{itemize}
\end{remark}


\end{document}